\newcommand{\im}{\mathrm{im}}
\newcommand{\dx}[1]{\; \mathrm{d} #1}
\newcommand{\argmin}[1]{\underset{ #1 }{\mathrm{arg}\,\mathrm{min}}}
\newcommand{\sd}{\: : \:}
\newcommand{\supp}{\mathrm{supp}\:}
\newcommand{\defi}{\: \mathrel{\mathop{\raisebox{1pt}{\scriptsize$:$}}}= \:}
\newcommand{\restr}{\mathbin{\vrule height 1.6ex depth 0pt width 0.13ex\vrule height 0.13ex depth 0pt width 1.3ex}}
\def\Xint#1{\mathchoice
{\XXint\displaystyle\textstyle{#1}}%
{\XXint\textstyle\scriptstyle{#1}}%
{\XXint\scriptstyle\scriptscriptstyle{#1}}%
{\XXint\scriptscriptstyle\scriptscriptstyle{#1}}%
\!\int}
\def\XXint#1#2#3{{\setbox 0=\hbox{$#1{#2#3}{\int}$}
\vcenter{\hbox{$#2#3$}}\kern-.5\wd0}}
\def\dashint{\Xint-}
\newcommand{\dof}{\mathrm{dof}}
\newcommand{\dif}{\mathrm{div}}
\newcommand{\curl}{\mathrm{curl}}
\newcommand{\dist}{\mathrm{dist}}
\newcommand{\id}{\mathrm{Id}}
\newcommand{\bv}{\mathrm{BV}}
\newcommand{\N}{\mathcal{N}}
\newcommand{\E}{\mathcal{E}}
\newcommand{\C}{\mathcal{C}}
\newcommand{\M}{\mathcal{M}}
\newcommand{\T}{\mathcal{T}}
\newcommand{\V}{\mathcal{V}}
\newcommand{\D}{\mathcal{D}}
\renewcommand{\P}{\mathcal{P}}
\renewcommand{\L}{\mathcal{L}}
\newcommand{\F}{\mathcal{F}}
\newcommand{\MM}{\mathbb{M}}
\newcommand{\HH}{\mathbb{H}}
\newcommand{\cst}{\mathsf{C}} 
\newcommand{\intc}{\mathbf{C}}
\newcommand{\rca}{\mathbf{M}}
\newcommand{\ee}{\mathbf{e}}
\newcommand{\fePP}[1]{\mathbb{P}^{#1}}
\newcommand{\feP}[1]{P^{#1}}
\newcommand{\feNed}{\mathrm{Ned}}
\newcommand{\feRT}{\mathrm{RT}}
\algnewcommand\algorithmicinit{\textbf{Initialize:}}
\algnewcommand\Init{\item[\algorithmicinit]}
\algnewcommand\algorithmicparam{\textbf{Parameter:}}
\algnewcommand\Param{\item[\algorithmicparam]}
\algnewcommand\algorithmicout{\textbf{Output:}}
\algnewcommand\Out{\item[\algorithmicout]}
\newtheorem{theorem}{Theorem}[section]
\newtheorem{definition}[theorem]{Definition}
\newtheorem{proposition}[theorem]{Proposition}
\newtheorem{lemma}[theorem]{Lemma}
\newtheorem{remark}[theorem]{Remark}
\newtheoremstyle{name_and_space}{11pt}{20pt}{\itshape}{}{\bfseries}{}{.5em}{#1 #2 #3}
\theoremstyle{name_and_space}
\newif\ifJournal
\begin{document}

\title{A finite element approach for minimizing \\ line and surface energies arising in the study of \\ singularities in liquid crystals}

\date\today
\author[1]{Dominik Stantejsky}
\affil[1]{Universit\'e de Lorraine, Institut \'Elie Cartan de Lorraine, UMR 7502 CNRS,  54506 Vand\oe uvre-l\`es-Nancy Cedex, France}


\parskip 6pt

\date\today 

\parskip 6pt

\maketitle

\begin{abstract}
Motivated by a problem originating in the study of defect structures in nematic liquid crystals, we 
describe and study a numerical algorithm for the resolution of a Plateau-like problem. The
energy contains the area of a two-dimensional surface $T$ and the length of its boundary $\partial T$ reduced by a prescribed curve to make our problem non-trivial. 
We additionally include an obstacle $E$ for $T$ and pose a surface energy on $E$.
We present an algorithm based on the Alternating Direction Method of Multipliers that minimizes a discretized version of the energy using finite elements, generalizing existing TV-minimization methods.
We study different inclusion shapes demonstrating the rich structure of minimizing configurations and provide physical interpretation of our findings for colloidal particles in nematic liquid crystal .
\linebreak
\textbf{Keywords:}  Nematic liquid crystal colloids, finite elements, ADMM, Plateau problem, obstacle problem, currents \\
\textbf{MSC2020:} 
49Q20, 
65K10, 
65N30, 
76A15. 
\end{abstract}

\tableofcontents

\section{Introduction}
\label{sec:intro}

In this paper we propose a numerical algorithm for the solution of a geometric problem consisting in finding a two-dimensional surface $T$ in $\mathbb{R}^3$ which minimizes the energy $\E_0$ given by
\begin{align} \label{intro:eq:E0}
\mathcal{E}_0(T) &= \mathbb{M}(T\restr\Omega) + \int_\mathcal{M} |\nu_3| \:\mathrm{d}\mu_{T\restr\mathcal{M}} + \beta\: \mathbb{M}(\partial T + \Gamma)  \, .
\end{align} 
The energy $\E_0$ consists of the following three parts:
The first contribution to the energy is given by the (two-dimensional) mass of $T$ (i.e.\ the surface area) outside an inclusion $E\subset\mathbb{R}^3$ and $\Omega$ being $\Omega=\mathbb{R}^3\setminus\Omega$.
The second part is the integral of a density integrated over the part of $T$ on the inclusion surface $\M\defi\partial E$.
In our case the density is given by $|\nu_3|$, the absolute value of the $3-$component of the normal vector field on $\M$.
The last term is the (one-dimensional) mass (i.e.\ the length) of the boundary $\partial T$ reduced by a given prescribed curve $\Gamma$ and weighted by a parameter $\beta\in (0,\infty)$.
The latter are essential since for $\Gamma=\emptyset$ or $\beta=0$ the minimizer is trivial. 
A natural domain for this energy is the space of so called \emph{currents} or \emph{flat chains} of dimension $k$, denoted $\F^k$, i.e.\ $T\in \F^2$ and $\partial T,\Gamma\in\F^1$.
We refer the reader to \cite{Federer1960,Simon1983,Morgan2016} 
for details on currents and \cite{Fleming1966,White1999a} for flat chains.

Our motivation for considering the energy in \eqref{intro:eq:E0} comes from \cite{ACS2021,ACS2024} in which it has been shown that $\E_0$ describes the asymptotic behaviour of point and line singularities in the Landau-de Gennes model for nematic liquid crystals around an inclusion when a homogeneous external magnetic field is applied in $\ee_3-$direction.
More precisely, point defects are located on $T$ which also incorporates surface effects coming from the inclusion $E$ through the integral over $\M$.
The geometric objects $T,\partial T, \Gamma$ are flat chains (of dimension $2$ or $1$ respectively) with values in the coefficient group $\mathbb{Z}_2$, in particular the addition of two objects of the same dimension is well defined.
This is the coefficient group we're focusing on in this article.
As shown in \cite{ACS2024}, the singular set of lines corresponds to $S := \partial T + \Gamma$, where $\Gamma=\{\nu\cdot\ee_3=0\}\subset\M$, so that $\MM(\partial T + \Gamma)$ measures the length of the line singularitiy $S$. 
The parameter $\beta$ acts as a weight for penalizing deviations of $\partial T$ from $\Gamma$ and originates in the coupling between magnetic, bulk and elastic forces in the asymptotic liquid crystal model.
Note that compared to the limit energy in \cite{ACS2024}, the energy in \eqref{intro:eq:E0} is missing the constant $C_\M$ which only depends on the shape $\M$ relative to the direction of the magnetic field.
As this constant does not influence our analysis but only need to be added when comparing energies of different orientations, we do not account for it until Section~\ref{sec:results} when non-symmetric particles are discussed.

While the interpretation in terms of point and line singularities is our main interest in studying minimizers of \eqref{intro:eq:E0}, it is worth noting that our problem is closely related to the \emph{Plateau problem} and the \emph{obstacle problem} and can be seen as a generalization of both. 
The obstacle problem consists in finding a surface $T$ with minimal area outside a given obstacle $E$ such that $\partial T$ is given by a fixed curve $\Gamma$.
Setting the integrated density on $T\restr\M$ equal to one and taking $\beta\gg 1$ to force $\partial T = \Gamma$, one can see minimizers of \eqref{intro:eq:E0} as solutions to the obstacle problem.
If again $\beta\gg 1$ to ensure $\partial T = \Gamma$ and taking $E=\emptyset$, the energy $\E_0$ reduces to the classical Plateau problem which asks for the surface $T$ of minimal area spanning a given curve $\Gamma$.
This problem dates back to Lagrange \cite{Lagrange1761} and solutions, so called \emph{minimal surfaces} have been studied extensively since then \cite{Douglas1931,Rado1933,Struwe2014}.
In some particular cases, analytic tools allow for explicit solutions or characterizations, see \cite{Hoffman1993,Karcher1989,Courant1977,Jost1991,Nitsche2011}.

Whenever the boundary and ambient space do not disclose an exact solution, the question of numerical approximation arises. 
Different approaches have been developed to represent surfaces and how to ensure their minimality. 
Most famous is the \emph{mean curvature flow} \cite{Dziuk1990,Brakke1992}, based on the vanishing mean curvature optimality condition for minimal surfaces. 
Although originally developed for hypersurfaces, the concept has been rapidly generalized to arbitrary codimension \cite{Ambrosio1996}.
While the standard approach consists in representing the surface via a level set, more recently varifolds constituted by point clouds are also used \cite{Buet2019}.
In addition to classical algorithms, it is also possible to employ machine learning techniques such as training neural networks to simulate a mean curvature flow \cite{Masnou2021}.
Another very successful ansatz is to model the surface as jump set of a $\bv-$function and therefore minimizing the total variation leads to minimal surfaces. 
The total variation is then discretized via finite difference or finite elements \cite{Condat2017,Herrmann2018,ChambollePock2021,Wang2021}, although other choices also prove to be useful \cite{Abergel2017}.

Compared to all of the aforementioned methods, the treatment of our limit energy $\E_0$ exhibits the major challenge of optimizing a surface \emph{and} its boundary simultaneously. 
While mean curvature flows of both objects independently are known and implemented, their joint minimization might cause conflict due to possible contradicting movements close to the boundary. 
In the following, we will use an approach related to the minimization of the total variation generalizing the method in \cite{Wang2021}.

\section{Theoretical background}
\label{sec:theory}

As a first step, we describe a way to compute the mass $\MM$ of two- and one-dimensional objects that will allow as easy discretization later on.
Let $K$ be an open Lipschitz domain of finite measure in which we will state our problem.
The main simplification we are going to apply is to take the objects $T,\partial T,\Gamma$ in the space of currents rather than flat chains resulting in a convex optimization problem.
Then, we reformulate the problem 
in terms of vector fields.
If we assume that $T$ and $\partial T$ are regular enough, let $\nu_T$ be a unit normal vector field on $T$ and $\tau_{\partial T}$ a normal tangent vector field on $\partial T$ (with induced orientation).
Then, it holds that
\begin{align*}
\MM(T)
\ &= \ \int_T 1\dx\mathcal{H}^2
\ = \ \sup_{\substack{p\in L^\infty(K,\mathbb{R}^3) \\ \Vert p\Vert_{L^\infty}\leq 1}} \int_T p\cdot \nu_T \dx\mathcal{H}^2\, ,
\end{align*}
and 
\begin{align*}
\MM(\partial T)
\ &= \ \sup_{\substack{q\in L^\infty(K,\mathbb{R}^3) \\ \Vert q\Vert_{L^\infty}\leq 1}} \int_{\partial T} q\cdot \tau_{\partial T} \dx\mathcal{H}^1\, .
\end{align*}
By Stokes' Theorem it holds that (at least formally, if $q$ is smooth enough)
\begin{align*}
\int_{\partial T} q\cdot \tau_{\partial T} \dx x
\ &= \ \int_T \curl(q)\cdot \nu_T \dx x \, .
\end{align*}
If we think of $u$, a $\mathbb{R}^3-$valued measure compactly supported on $K$, i.e.\ $u\in\rca(K)^3$, as representation of $T$, the above reasoning justifies the definition  of
\begin{align}\label{def:C_func_meas}
\intc(u,K)
\ \defi \
\sup\Big\{ \int_K \curl(\phi)\cdot\dx u \sd \phi\in C_c^\infty(K,\mathbb{R}^3),\, \Vert \phi\Vert_\infty\leq 1 \Big\}\, ,
\end{align}
as proxy for the mass of the boundary $\partial T$.
In order to incorporate $\Gamma$, we need to replace $\partial T$ in the computation by $\partial T + \Gamma$ and to add the tangent vector field of $\Gamma$.
If formally $\curl(u_0)$ is the vector-valued measure $\tau_\Gamma \mathcal{H}^1\restr\Gamma$, for some $u_0\in\rca(K)^3$, then we can 
write
\begin{align*}
\MM(\partial T + \Gamma)
\ = \
\intc(u+u_0,K) 
\, .
\end{align*}
It remains to include the particle $E\subset K$ and the energy coming from its surface $\M=\partial E$.
Let $\Omega\defi K\setminus \overline{E}$.
We then write
\begin{align} \label{pb:lim_currents_Gam_M}
\inf_{\substack{u\in \rca(K)^3}} \int_\Omega \dx\Vert u\Vert 
\ + \ \int_\M |\nu_3| \dx\Vert u\Vert
\ + \ \beta \ \intc(u+u_0,K) \, .
\end{align}

It is not imminent that the minimization problem in \eqref{intro:eq:E0} (or \eqref{pb:lim_currents_Gam_M}) is well posed. 
This is due to the fact that $|\nu_3|=0$ on $\Gamma$, and hence a minimizing sequence of \eqref{intro:eq:E0} might accumulate mass on $\Gamma$ and the limiting object is not a finite mass current (or measure) any more despite being of finite energy.

\begin{proposition}[Minimizers have finite mass]
\label{prop:well_posed_pb}
A minimizers of \eqref{intro:eq:E0} exist in the class of finite mass currents.
\end{proposition}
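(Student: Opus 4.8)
The plan is to run the direct method on the convex relaxation \eqref{pb:lim_currents_Gam_M}. Let $(u_n)\subset\rca(K)^3$ be a minimizing sequence, so that the three contributions
\[
\int_\Omega \dx\Vert u_n\Vert,\qquad \int_\M |\nu_3|\dx\Vert u_n\Vert,\qquad \beta\,\intc(u_n+u_0,K)
\]
stay uniformly bounded. The first bound controls $\Vert u_n\Vert(\Omega)$, and for every fixed $\delta>0$ the second controls the mass carried by $\{x\in\M:\,|\nu_3(x)|\geq\delta\}$, since there $\Vert u_n\Vert\leq\delta^{-1}|\nu_3|\Vert u_n\Vert$. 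The \emph{only} quantity not bounded a priori is the mass sitting in a shrinking neighbourhood of $\Gamma=\{\nu_3=0\}$ inside $\M$, precisely because the effective weight $\psi$ (equal to $1$ on $\Omega$ and to $|\nu_3|$ on $\M$) vanishes on $\Gamma$. This concentration is the sole obstruction to weak-$*$ compactness in $\rca(K)^3$.

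The heart of the argument is to remove this excess mass without raising the energy, so that the minimizing sequence may be taken of uniformly bounded mass. Writing $u_n=u_n\restr\Omega+u_n\restr\M$, the part on $\M$ is a $2$-current lying in the surface $\M$, hence has polar $\pm$ the unit normal of $\M$ and is described by a real multiplicity with respect to that orientation. I would cap this multiplicity at a fixed level $M$, decomposing the current into its super-level slices via the coarea formula and discarding the slices above level $M$, and leave $u_n\restr\Omega$ untouched. Because $|\nu_3|\geq 0$ the weighted $\M$-term does not increase, the $\Omega$-term is unchanged, and by the coarea identity $\intc(\cdot+u_0,K)=\MM(\partial\,\cdot\,)$ is additive over the slices, so discarding the top slices does not increase the boundary term either. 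The resulting competitor $\tilde u_n$ is therefore still minimizing, while $\Vert\tilde u_n\Vert(\M)\leq M\,\mathcal{H}^2(\M)$ forces $\sup_n\Vert\tilde u_n\Vert(K)<\infty$.

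With a uniform mass bound, weak-$*$ sequential compactness of bounded sets in $\rca(K)^3=(C_0)^*$ yields a subsequence $\tilde u_n\overset{*}{\rightharpoonup}u_*$, and in particular $u_*$ has finite mass. It then remains to pass to the limit in each term. The first two terms combine into a single weighted mass $\int_K\psi\dx\Vert u\Vert$ with $\psi$ lower semicontinuous on $K$ (its smaller values lie on the closed set $\M$, and $\psi\to 1$ approaching $\M$ from $\Omega$), so Reshetnyak lower semicontinuity applies; and $\intc(\cdot+u_0,K)$ is weak-$*$ lower semicontinuous directly from \eqref{def:C_func_meas}, being a supremum of the weak-$*$ continuous functionals $u\mapsto\int_K\curl(\phi)\cdot\dx(u+u_0)$ with $\curl(\phi)\in C_c^\infty$. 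Hence the energy of \eqref{pb:lim_currents_Gam_M} at $u_*$ is at most $\liminf_n$ of the energies of $\tilde u_n$, which equals the infimum, so $u_*$ is a minimizer of finite mass and the associated current solves \eqref{intro:eq:E0}.

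I expect the main obstacle to be the truncation step, specifically the behaviour of the nonlocal functional $\intc$ under capping of the multiplicity: one must rule out that discarding high-multiplicity slices \emph{creates} new boundary where the capped $\M$-part meets $u_n\restr\Omega$, i.e.\ that removing a slice does not destroy a cancellation of boundaries that was lowering $\MM(\partial T)$. The cleanest route is to perform the truncation globally at the level of the super-level currents, where the coarea equality $\MM(\partial T)=\int_0^\infty\MM(\partial T_t)\dx t$ makes all three terms simultaneously monotone, and to verify this equality (absence of cancellation between slices of different orientation) for finite-energy competitors. A softer alternative that sidesteps the exact equality is to add a vanishing penalization $\varepsilon\Vert u\Vert(K)$, whose minimizers $u_\varepsilon$ have bounded mass by coercivity, and then establish $\sup_\varepsilon\Vert u_\varepsilon\Vert(K)<\infty$ by the same comparison argument—trimming mass near $\Gamma$ strictly lowers the penalized energy—before letting $\varepsilon\to 0$.
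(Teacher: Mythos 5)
Your overall frame (direct method: uniform mass bound along a minimizing sequence, weak-$*$ compactness in $\rca(K)^3$, lower semicontinuity of the weighted mass and of $\intc$) matches the paper's, and your lower-semicontinuity discussion is correct. The gap sits in the one step that carries the whole proposition --- the uniform mass bound near $\Gamma$ --- and it is exactly the step you flag yourself as ``the main obstacle''. The truncation mechanism does not go through. First, capping the multiplicity of $u_n\restr\M$ while leaving $u_n\restr\Omega$ untouched creates new boundary along the curve where the part of $T$ in $\Omega$ attaches to $\M$, so $\MM(\partial \tilde T_n+\Gamma)$ can increase. Second, the coarea identity $\MM(\partial T)=\int_0^\infty\MM(\partial T_t)\dx t$ that you invoke to make the boundary term monotone under discarding slices is not available here: it is the scalar BV coarea formula, which in the vector-field picture requires $\curl(u)=0$ so that $u=\nabla\phi$ locally; for a competitor with nonzero boundary there is no potential, one only has $\MM(\partial T)\leq\int_0^\infty\MM(\partial T_t)\dx t$, and the cancellations between slices of opposite orientation (which are precisely what can lower $\MM(\partial T+\Gamma)$) may be destroyed by removing the top slices. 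Neither of your two proposed repairs is carried out, and the second one (the $\varepsilon\Vert u\Vert(K)$ penalization) still presupposes the very comparison --- ``trimming mass near $\Gamma$ lowers the energy'' --- that is missing.

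The paper closes this gap by a local comparison rather than an unconditional truncation: by slicing it selects a good tubular radius around $\M$ on which the sliced boundary mass is controlled by $\E_0(T)/\varepsilon_*$, projects the nearby part of $T$ onto $\M$ (adding the connecting collar), slices again within $\M$ around $\Gamma$, and then applies the isoperimetric-type inequality of De Pauw to bound the mass of the piece of the deformed competitor in a small neighbourhood of $\Gamma$ by the mass of its boundary. This is a filling/cone argument exploiting minimality and the smallness of the region, and it is insensitive to orientation cancellations. If you want to salvage your route, replace the coarea truncation by such a cut-and-fill step: excise $T\restr B_\varepsilon(\Gamma)$ along a good slice, glue in a filling of the cut boundary with mass controlled by $\varepsilon\cdot\MM(\partial(T\restr B_\varepsilon(\Gamma)))$, and compare energies.
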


\begin{proof}
In order to show existence it is enough to show that for a minimizing sequence $T_k$ of $\E_0$ the mass is bounded uniformly in $k$.
Indeed, since the mass of the boundary of $T_k$ is uniformly bounded in $k$ via the energy, we can then apply the compactness theorem for currents \cite[4.2.17]{Federer1996} and pass to the limit in $k$.

The only location where potentially the mass of $T_k$ could blow up is on the line $\Gamma$, since everywhere else the mass is controlled by the energy.
If there is no danger of confusion, we suppress the $k$ in the notation to improve readability.

Let $\varepsilon>0$ be arbitrary, but small.
Since the total energy $\E_0(T)$ is bounded, so is the energy in the annular region $\{\dist(\cdot,\M)\in (\varepsilon,2\varepsilon)\}$ and via slicing from \cite[Corollary~3.10]{Federer1960} we can choose a radius $\varepsilon_*\in (\varepsilon,2\varepsilon)$ such that 
\begin{align*}
\MM(\partial(T\cap\{\dist(\cdot,\M)\leq\varepsilon_*\}))
+
\MM(\partial(\partial T\cap\{\dist(\cdot,\M)\leq\varepsilon_*\}))
\ &\leq \
\frac{C}{\varepsilon_*}\E_0(T)
\, .
\end{align*}
We deform $T$ into $\widetilde{T}$ by replacing $T\restr\{\dist(\cdot,\M)\leq\varepsilon_*\}$ by its projection onto $\M$, called $\Pi_\M(T\restr\{\dist(\cdot,\M)\leq\varepsilon_*\})$, and adding the surface connecting $T\restr\{\dist(\cdot,\M)=\varepsilon_*\}$ along $\nabla\dist(\cdot,\M)$ with $\Pi_\M(T\restr\{\dist(\cdot,\M)=\varepsilon_*\})$.
Thus, 
\begin{align*}
\MM(\Pi_\M(\widetilde{T}))
\ &\leq \
C\MM(T)
\, ,
\quad
\MM(\Pi_\M(\partial\widetilde{T}))
\ \leq \
C\MM(\partial T)
\, ,
\end{align*}
which implies that
\begin{align*}
\MM(\widetilde{T})
\ &\leq \
C\E_0(T)
+
\frac{C}{\varepsilon_*} \varepsilon_* \E_0(T) 
\, ,
\quad
\E_0(\widetilde{T})
\ \leq \
C\E_0(T)
+
\frac{C}{\varepsilon_*} \varepsilon_* \E_0(T)
\, .
\end{align*}
Now we repeat the slicing argument for $\widetilde{T}\restr\M$ to find a distance $\varepsilon_{**}\in(\varepsilon,\varepsilon_*)$ such that 
\begin{align*}
\MM(\partial(\widetilde{T}\restr\{\dist_\M(\cdot,\Gamma)\leq\varepsilon_{**}\}))
+
\MM(\partial(\partial \widetilde{T}\restr\{\dist_\M(\cdot,\Gamma)\leq\varepsilon_{**}\}))
\ &\leq \
\frac{C}{\varepsilon_{**}}\E_0(\widetilde{T}\restr\M)
\, .
\end{align*}
With the same argument as in \cite[5.1]{Pauw2022}, one can conclude that 
\begin{align*}
\MM(\widetilde{T}\restr\{\dist_\M(\cdot,\Gamma)\leq\varepsilon_{**}\})
\ &\leq \
C \MM(\partial(\widetilde{T}\restr\{\dist_\M(\cdot,\Gamma)\leq\varepsilon_{**}\})) 
\, ,
\end{align*}
and thus the mass of $\widetilde{T}\restr\{\dist_\M(\cdot,\Gamma)\leq\varepsilon_{**}\}$ is bounded, so no accumulation of mass on $\Gamma$ is possible.
\end{proof}

\begin{remark}[Relation to total variation minimization]
\label{rem:total_var_min}
The fundamental idea in what follows is to represent $T$ by a vector field $u$ and its boundary by $\curl(u)$.
Inside $T$, $\curl(u)$ vanishes and $u$ can be seen as a (local) gradient of some scalar potential $\phi$.
Minimizing the mass of $T$ (represented by the $L^1-$norm of $u$) and hence the $L^1-$norm of $\nabla\phi$ can be interpreted as minimizing the total variation of $\phi$.
\end{remark}

The problem of minimizing a total variation has attracted a lot of attention in recent years \cite{Condat2017, Herrmann2018, Abergel2017, ChambollePock2021}.
Our problem differs since we consider a general vector field $u$ (in contrast to the classical total variation problems where $u=\nabla\phi$) and we have an additional $L^1-$term in our minimization that depends on the derivative of $u$.
The latter fact contrasts with problems e.g. from image reconstruction where terms like $\Vert \phi - \phi_0 \Vert_{L^2}^2$ are considered.
We point out that the subject of \cite{Wang2021} is the case when $\curl(u) = \curl(u_0)$ is prescribed. 
This can be seen as a total variation minimization for solving the classical Plateau problem.

\begin{remark}[Differential forms]
\label{rem:diff_forms}
In the smooth case, problem \eqref{pb:lim_currents_Gam_M} can also be stated via differential forms.
Let $\Omega^k(K)$ denote the space of differential $k-$forms. 
We can identify $\Omega^1(K)$ with vector fields in $K$ and \eqref{pb:lim_currents_Gam_M} can be reformulated as finding a $1-$form $\omega_T$ with $\int_K \omega_T\wedge \eta = \int_T\eta$ for all $\eta\in \Omega^2(K)$ minimizing the mass norm 
\begin{align*}
\Vert \omega_T\Vert_{\mathrm{Mass}} 
\ = \ 
\sup_{\substack{\eta\in\Omega^2(K) \\ \Vert\eta\Vert_\infty\leq 1}} \int_K \eta\wedge\omega_T \, .
\end{align*}
The boundary $\partial T$ corresponds to a $2-$form $\omega_{\partial T}$ with $\omega_{\partial T} = \dx \omega_T$. Indeed, by Stokes' Theorem it holds that for any $\eta\in \Omega^1(K)$ which vanishes on $\partial K$
\begin{align*}
\int_K \eta\wedge \omega_{\partial T}
\ = \ \int_{\partial T} \eta
\ &= \ \int_T \dx\eta
\ = \ \int_K \dx\eta\wedge \omega_T
\ = \ \int_{K} \eta\wedge \dx\omega_T \, .
\end{align*}
Hence, \eqref{pb:lim_currents_Gam_M} is essentially the same as to minimize $\Vert \omega\Vert_{\mathrm{Mass}} + \Vert|\nu_3| \, \iota_\M^{*}\omega\Vert_{\mathrm{Mass}} + \beta \Vert \hspace{-1.5mm}\dx\omega + \dx\omega_0\Vert_{\mathrm{Mass}} $ among all $\omega\in\Omega^1(K)$, where $\omega_0$ is a $1-$form corresponding to the boundary $\Gamma$ and $\iota_\M:\M\to K$ is an embedding, $\iota_\M^{*}\omega$ denoting the pull back of $\omega$ to a $1-$form on $\M$.
\end{remark}

\section{Numerical simulation}
\label{sec:numerics}

In order to solve Problem \eqref{pb:lim_currents_Gam_M} we use a finite element discretization and then employ an Alternating Direction Method of Multipliers (ADMM) algorithm for the optimization.

\subsection{Finite element discretization}
\label{subsec:fe}

We start by introducing the finite element spaces that we will use in the further course of this chapter.
The definition we give here is standard and can be found in many books on finite elements, for example \cite{Ern2021} or \cite[Ch. 3]{Fenics2012}.

\begin{definition}[Finite element spaces]
Let $\T_h$ be a tetrahedral mesh of $K$ consisting of a set of nodes $\N_h$, edges $\E_h$, facets $\F_h$ and tetrahedra $\T_h$.
We call a \emph{finite element space} a finite dimensional function space $\V$ (in our case a subspace of polynomials) defined on a domain $\D$ (here the tetrahedra of $\T_h$) and a set of degrees of freedom $\L$ which form a basis of the dual space of $\V$ (unisolvence).
For $T\in\T_h$, let $\P_q(T)$ be the space of polynomials on $T$ with degree smaller or equal to $q\geq 0$ and $\overline{\P}_q(T)$ the polynomials on $T$ with degree equal to $q$.
\begin{enumerate}
\item The \emph{Lagrange-element} of order $1$ is given by
\begin{align*}
\D = T\, , \qquad \V = \P_1(T)\, ,
\end{align*}
where $T\in \T_h$ and $\L$ consists of function evaluations at the nodes of $T$.
We call $\feP 1$ the corresponding finite element space defined on $\T_h$.
\item The \emph{discontinuous Lagrange-element} of order $0$ is given by
\begin{align*}
\D = T\, , \qquad \V = \P_0(T) \, ,
\end{align*}
where $T\in \T_h$ and $\L$ consists of taking the average of a function over $T$.
The corresponding finite element space defined on $\T_h$ is denoted by $\feP 0$ and the vector valued version $\fePP{0}\defi(\feP 0)^3$.
\item Next, the \emph{Nédélec-element} of the first kind of order $1$ is given by
\begin{align*}
\D = T\, , \qquad \V = (\P_0(T))^3 + x\times\overline{\P}_0(T) \, ,
\end{align*}
for $T\in \T_h$. 
The degrees of freedom $\L$ for a function $v\in\V$ are given by the integrals of $v\cdot t_e$ over the edges $e\in\E_h$ that are included in the boundary $\partial T$, $t_e$ denoting the tangent vector of $e$.
The finite element space consisting of Nédélec-elements of order $0$ is denoted by $\feNed$.
\item We define the \emph{Raviart-Thomas-element} of order $0$ as
\begin{align*}
\D = T\, , \qquad \V = (\P_0(T))^3 + x\overline{\P}_0(T) \, ,
\end{align*}
for $T\in \T_h$. 
The degrees of freedom $\L$ for a function $v\in\V$ are given by the integrals of $v\cdot n_F$ over the facets $F\in\F_h$ that are included in the boundary $\partial T$, $n_F$ denoting the normal vector of $F$.
The finite element space consisting of Raviart-Thomas-elements of order $0$ is denoted by $\feRT$.
\end{enumerate}
Each finite element space $V$ comes with a projection operator $\Pi_V$ which allows us to pass from functions defined on $\Omega$ to finite element approximations over $\T_h$.
\end{definition}

We note that $\nabla v\in\feNed$ if $v\in\feP 1$, $\curl(v)\in\feRT$ if $v\in\feNed$ and $\dif(v)\in\feP 0$ if $v\in\feRT$.
More precisely, we have the following proposition:

\begin{proposition}\label{prop:exact_seq_FE}
Let $K$ be an open, simply connected Lipschitz domain in $\mathbb{R}^3$.
Then the diagram 
  \[
\begin{xy}
  \xymatrix{
\mathbb{R} \ar[r] & H^1 \ar[d]^{\Pi_{\feP 1}} \ar[r]^{\nabla} & H(\curl)\ar[d]^{\Pi_{\feNed}} \ar[r]^{\curl} & H(\dif) \ar[d]^{\Pi_\feRT} \ar[r]^{\dif} & L^2 \ar[d]^{\Pi_{\feP 0}} \ar[r] & 0 \\
\mathbb{R} \ar[r] & \feP{1} \ar[r]^{\nabla} & \feNed \ar[r]^{\curl} & \feRT \ar[r]^{\dif} & \feP{0} \ar[r] & 0
    }
\end{xy}
\]
commutes and the rows are short exact sequences.
\end{proposition}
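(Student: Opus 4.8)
The statement bundles together two claims: that each of the three squares commutes, and that both horizontal rows are exact. Before anything else I would record that the bottom row is genuinely a complex. The inclusions $\nabla\feP 1\subset\feNed$, $\curl\feNed\subset\feRT$ and $\dif\feRT\subset\feP 0$ noted just above the proposition, together with $\curl\nabla=0$ and $\dif\curl=0$ at the continuous level, show that consecutive discrete maps compose to zero; and each discrete space conforms to the Sobolev space above it, $\feP 1\subset H^1$, $\feNed\subset H(\curl)$, $\feRT\subset H(\dif)$, $\feP 0\subset L^2$. My plan is then to prove commutativity first and deduce discrete exactness from continuous exactness by transferring through the vertical interpolants.

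For commutativity I would argue square by square directly from the degrees of freedom, which is exactly where the choice of DOFs in the definition pays off: the canonical interpolant $\Pi_V$ is characterized by reproducing the DOFs of $V$. For the left square, given a sufficiently regular $w$, the N\'ed\'elec DOF of $\nabla w$ on an edge $e$ with endpoints $a,b$ is $\int_e \nabla w\cdot t_e = w(b)-w(a)$ by the fundamental theorem of calculus; since $\Pi_{\feP 1}w$ matches $w$ at the nodes, $\nabla(\Pi_{\feP 1}w)$ has identical edge DOFs, whence $\Pi_{\feNed}(\nabla w)=\nabla(\Pi_{\feP 1}w)$. The middle square uses Stokes' theorem: the Raviart-Thomas DOF of $\curl v$ on a facet $F$ equals $\int_F \curl v\cdot n_F=\int_{\partial F} v\cdot t$, a signed sum of the N\'ed\'elec edge DOFs of $v$ that $\Pi_{\feNed}v$ reproduces. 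The right square uses the divergence theorem: the $\feP 0$ DOF of $\dif w$ on a tetrahedron $T$ is $\int_T \dif w = \int_{\partial T} w\cdot n$, a sum of Raviart-Thomas facet DOFs of $w$. In each case the differential operator commutes with interpolation precisely because the lower DOFs are boundary traces of the upper ones. The one caveat is regularity: point values and edge/facet integrals are not defined on all of $H^1$, $H(\curl)$, $H(\dif)$, so these identities should be read on the dense subspaces where the canonical DOFs make sense.

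Exactness I would establish in two stages. The top row is the $L^2$ de Rham complex; under the topological hypothesis on $K$ (I would read this as contractibility, so that all de Rham cohomology vanishes — simple connectedness alone yields only $\ker\curl=\im\nabla$) the Poincar\'e lemma gives $\ker\nabla=\mathbb{R}$, $\ker\curl=\im\nabla$, $\ker\dif=\im\curl$, and surjectivity of $\dif$ onto $L^2$. The bottom row is then obtained by transferring this through the vertical maps: the interpolants form a cochain map by the commutativity just proved and are the identity on the discrete spaces, $\Pi_V|_V=\id$ (by unisolvence), so the discrete complex is a retract of the continuous one and its cohomology injects into the trivial continuous cohomology, forcing discrete exactness. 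Concretely, each equality can be bootstrapped directly: $\ker\nabla\cap\feP 1$ is the constants by connectedness of $K$; if $v\in\feNed$ has $\curl v=0$ then $v=\nabla\phi$ at the continuous level, and the left square gives $v=\Pi_{\feNed}v=\nabla(\Pi_{\feP 1}\phi)\in\nabla\feP 1$; the same argument with the middle and right squares yields $\ker\dif\cap\feRT=\curl\feNed$ and, given $g\in\feP 0$, a preimage $w\in H(\dif)$ with $\dif w=g$ interpolating to $\dif\Pi_\feRT w=\Pi_{\feP 0}g=g$.

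The genuine difficulty is not the algebra but the analysis hidden in the vertical maps: the canonical DOF-based interpolants are unbounded on the Sobolev spaces, so both the cochain-retract argument and the direct bootstrap require the preimages $\phi$ and $w$ to be regular enough to interpolate. Closing this gap is exactly the role of bounded commuting (quasi-)interpolation operators, which I would invoke; alternatively one can exploit that the discrete kernels automatically possess piecewise-smooth representatives (a curl-free $\feNed$ field already admits a piecewise-polynomial potential), which makes interpolation legitimate. Everything else — the three Stokes-type identities and the dimension bookkeeping — is routine.
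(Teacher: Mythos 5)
Your proof is correct and follows essentially the same route as the paper, which simply cites Ern--Guermond (Prop.~16.15 and Lemma~16.16) for exactly the argument you reconstruct: DOF-based commutativity via the fundamental theorem of calculus, Stokes' and the divergence theorem, then transfer of continuous exactness to the discrete complex through the commuting interpolants, with the unboundedness of the canonical interpolants handled by regularized or quasi-interpolation operators. Your caveat that simple connectedness only kills the first de Rham cohomology, so that exactness at $H(\dif)$ strictly requires the second cohomology to vanish as well (automatic for the contractible computational boxes used later in the paper), is a legitimate sharpening of the stated hypotheses rather than a gap in your argument.
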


\begin{proof}
The upper row is just the usual de Rham-complex for a simply connected Lipschitz domain in $\mathbb{R}^3$.
The lower row is its discrete analogue, see Prop.\ 16.15 in \cite{Ern2021}.
For the commutation properties, we refer to Chapter 16.1.2 and Chapter 16.2.2 and Lemma 16.16 in \cite{Ern2021}.
\end{proof}

Since $T$ and $\partial T$ are measures, we cannot assume much regularity on these objects and need to discretize them with finite elements of lowest possible order, e.g.\ $\fePP 0-$elements. 
We therefore choose to represent $u$ as a Nédélec function, noting that its curl belongs to $\feRT$ and can be seen as a subset of $\fePP 0$.
We thus rewrite 
$\MM(T)$ and $\MM(\partial T + \Gamma)$ as
\begin{align}\label{pb:fe_Ned_PP0}
\int_K |\Pi_{\feP 0}(u)| \dx x 
\quad\text{and}\quad
\int_K |\curl(u) + \curl(u_0)| \dx x
\, ,
\end{align}
where $u_0\in \feNed$ is an approximation for the tangential vector field $\tau_\Gamma$.

Note that the choice of Nédélec elements is not canonic and other choices are possible.
For example in \cite{Chambolle2020}, Crouzeix-Raviart elements are used for a total variation minimization problem.
The approximation of the solutions obtained there is sharper compared to the Nédélec/$P^1$ elements.
Also other finite elements could be used as long as they verify a variant of Proposition~\ref{prop:exact_seq_FE} and in particular the image and kernel of the discrete $\curl-$operator are known, see \cite[Ch. 7]{Arnold2018}.
In order to decide which elements to use one must then balance the quality of the approximations for $T$ and $S$ given by the size of $\ker(\curl)$ and $\im(\curl)$, as well as the computational cost required by the finite elements.

\begin{remark}[Relation to discrete total variation minimization]
\label{rem:total_var_min_discr}
In view of Proposition~\ref{prop:exact_seq_FE}, Remark~\ref{rem:total_var_min} can be translated into the finite element setting as follows:
If $u\in\feNed$ such that $\curl(u)=0\in\feRT$, then by exactness of the sequence there exists $\phi\in\feP 1$ such that $u=\nabla\phi$ and $\Vert u\Vert_{L^1}=TV(\phi)$.
\end{remark}

In order to find a finite element representation of \eqref{pb:lim_currents_Gam_M}, we introduce a density function $\rho_\delta\in C^0(\overline{\Omega})$, defined for $0<\delta\ll 1$ and $x\in\overline{\Omega}$ as $\rho_\delta(x)=|\nu_3(\Pi_\M(x))|$ if $\dist(x,\M)\leq \delta$ and $\rho_\delta(x)=1$ if $\dist(x,\M)>2\delta$.
The parameter $\delta$ determines the size of the boundary layer $\M_\delta \defi \{x\in\overline{\Omega}\sd x=\omega+r\nu(\omega)\text{ for }\omega\in\M,r\in [0,\delta]\}$ which will serve as proxy for $\M$.
We choose the approximation $u_{0,\delta}$ of $u_0$ to be centred at distance $\frac{\delta}{2}$ from $\M$ and supported in $\M_\delta\setminus\M_{\delta/4}$ such that $u_{0,\delta,h}\stackrel{*}{\rightharpoonup} u_0$ in $\rca(K)^3$.
We furthermore use the notation $\Omega_\delta\defi \Omega\setminus \M_\delta$.
For $u_h\in\feNed$ and $u_{0,\delta,h}\defi\Pi_{\feNed}(u_{0,\delta})$ we then define
\begin{align}\label{def:func_Ehd}
\E_{h,\delta}(u_h)
\ &= \
\begin{cases}
\int_K \rho_\delta|\Pi_{\feP 0}(u_h)| \dx x \ + \ \beta \int_K |\curl(u_h) + \curl(u_{0,\delta,h})| \dx x & \text{if } u_h\equiv 0 \text{ in }E\, , \\
+\infty & \text{otherwise.}
\end{cases}
\end{align}

We next show that if the boundary layer $\M_\delta$ is exactly one layer of cells in the mesh $\mathcal{T}_h$ (in particular $\delta=h$), then the functional $\E_{h,\delta}$ approaches \eqref{pb:lim_currents_Gam_M} in a variational sense when $h\rightarrow 0$.

The fact that $\M_\delta$ has a thickness of only one cell is crucial to ensure compactness as stated in the following lemma:

\begin{lemma}\label{lem:num_isoperim_ineq}
Let $\mathcal{T}_h$ be a mesh of $\Omega$ such that $\M_\delta$ has a thickness of one cell, i.e.\ all cells in $\M_\delta$ share at least one face (facet, edge or vertex) with $\M$.
Then, there exists a constant $\cst=\cst(\M)>0$ such that for all $0<h\ll 1$ and all functions $u_h\in\feNed$ with $\supp(u_h)\subset\M_\delta$ it holds
\begin{align}\label{lem:num_isoperim_ineq:eq}
\int_{\M_\delta} |u_h| \dx x
\ &\leq \ 
\cst \left(\int_{\M_\delta} |\curl(u_h)| \dx x
\ + \
\int_{A_\delta} |u_h| \dx x \right)
\, ,
\end{align}
where $A_\delta\subset \M_\delta$ is the boundary layer corresponding to an open set $A\subset \M$.
\end{lemma}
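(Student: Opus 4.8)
The plan is to reduce \eqref{lem:num_isoperim_ineq:eq} to a relative isoperimetric inequality on the surface $\M$ itself, transported to the one-cell layer $\M_\delta$ by the nearest-point projection $\Pi_\M$. Throughout I read $u_h$ as the vector-field representation of a rectifiable $2$-current $T_h$ supported in $\M_\delta$, so that $\curl(u_h)$ represents $\partial T_h$ and the three integrals in \eqref{lem:num_isoperim_ineq:eq} are, up to constants depending only on $\M$, the masses $\MM(T_h)$, $\MM(\partial T_h)$ and $\MM(T_h\restr A_\delta)$. For $h$ below the reach of $\M$ the tubular-neighbourhood theorem makes $\Pi_\M\colon\M_\delta\to\M$ well defined, with $d\defi\dist(\cdot,\M)\in[0,\delta]$ a transverse coordinate and with Lipschitz constant and Jacobian bounded solely by the geometry of $\M$. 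The hypothesis that $\M_\delta$ is exactly one cell thick enters twice: it makes these finite-element-versus-current comparisons uniform in $h$, and it prevents the layer from folding over $\M$.

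First I would split $T_h$ into a nearly tangential part $T_h^{\mathrm t}$ and a nearly normal part $T_h^{\mathrm n}$, according to whether the approximate tangent plane of $T_h$ is close to tangent or close to normal to $\M$ (equivalently, according to the size of the tangential Jacobian $J\Pi_\M$ along $T_h$). Because $\M_\delta$ offers only one cell of room in the normal direction, the normal part is a short collar: its area is at most $C\,h$ times the length of its horizontal boundary trace, hence $\MM(T_h^{\mathrm n})\leq C\,h\,\MM(\partial T_h)$, which the curl term of \eqref{lem:num_isoperim_ineq:eq} absorbs for $h\ll1$. On the tangential part $\Pi_\M$ is bi-Lipschitz, so $\MM(T_h^{\mathrm t})\leq C\,\MM(\Pi_{\M\#}T_h)$, and altogether $\MM(T_h)\leq C\big(\MM(\Pi_{\M\#}T_h)+\MM(\partial T_h)\big)$.

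Second, I would estimate the pushed-forward current on the two-dimensional manifold $\M$. Here $\Pi_{\M\#}T_h$ is a finite-perimeter subset $R\subset\M$ (in the $\mathbb Z_2$ sense), with $\partial R=\Pi_{\M\#}(\partial T_h)$ and $\MM(\partial R)\leq C\,\MM(\partial T_h)$. On the closed surface $\M$ the relative isoperimetric inequality gives $\MM(R)\leq C(\M)\big(\MM(\partial R)+\MM(R\restr A)\big)$: when $\mathrm{area}(R)\leq\tfrac12\mathrm{area}(\M)$ this is the usual isoperimetric inequality, and when $R$ fills more than half of $\M$ one applies it to $\M\setminus R$, whose short perimeter forces $R$ to cover all but a short-boundary portion of $A$, so that the term $\MM(R\restr A)$ dominates. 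Since $\MM(R\restr A)\leq\MM(T_h\restr A_\delta)$ by the mass-decreasing property of projections, combining with the first step yields $\MM(T_h)\leq C\big(\MM(\partial T_h)+\MM(T_h\restr A_\delta)\big)$, which is \eqref{lem:num_isoperim_ineq:eq} after translating currents back into finite-element integrals. This projection-and-capping deformation is exactly the device already used in the proof of Proposition~\ref{prop:well_posed_pb}.

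I expect the main obstacle to be making the two uses of one-cell thickness quantitative and $h$-uniform. On the analytic side one must control the normal part $T_h^{\mathrm n}$: ruling out that $T_h$ oscillates transversally enough to create area not seen by $\partial T_h$ is precisely what the single layer of cells prevents, but turning this into the clean bound $\MM(T_h^{\mathrm n})\leq C\,h\,\MM(\partial T_h)$ requires a careful slicing of $T_h$ by the level sets of $d$ together with the coarea formula. On the discrete side, the identities $\int_{\M_\delta}|u_h|=\MM(T_h)$ and $\int_{\M_\delta}|\curl(u_h)|=\MM(\partial T_h)$ hold only approximately for N\'ed\'elec and Raviart-Thomas interpolants, so one must either realise the projection $\Pi_\M$ cellwise on the degrees of freedom --- which the one-cell hypothesis permits --- and invoke $h$-uniform norm equivalence on the reference tetrahedron, or quantify and absorb the interpolation error for $h\ll1$. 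The most delicate single point is the estimate $\MM\big((\Pi_{\M\#}T_h)\restr A\big)\leq C\int_{A_\delta}|u_h|$, i.e.\ that the projection does not concentrate onto $A$ mass coming from outside $A_\delta$, which again rests on the layer being unfolded.
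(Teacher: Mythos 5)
Your proposal has a genuine gap at its central step, the reduction from the three-dimensional layer to the surface $\M$. You claim $\MM(T_h)\leq C\big(\MM(\Pi_{\M\#}T_h)+\MM(\partial T_h)\big)$ by splitting off a ``nearly normal'' part bounded by $C\,h\,\MM(\partial T_h)$. This inequality is false in general, even for genuine rectifiable currents in the layer: a small closed ``bubble'' (a sphere of radius $\ll h$ contained in $\M_\delta$, or in Nédélec terms a gradient $u_h=\nabla\phi$ with $\phi\in\feP 1$ supported away from $\partial\M_\delta$ and $A_\delta$) has $\partial T_h=0$, pushes forward to the zero current on $\M$ (the two sheets cancel), and has empty trace on $A_\delta$, yet carries positive mass. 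For such a configuration every term on your right-hand side vanishes while the left-hand side does not, and your collar bound $\MM(T_h^{\mathrm n})\leq C\,h\,\MM(\partial T_h)$ fails because the normal part is not a graph over any boundary trace. Slicing by the level sets of $\dist(\cdot,\M)$ and the coarea formula, which you propose as the repair, do not detect this either: the slices of a bubble are closed curves of positive total length that are not controlled by $\partial T_h$. You correctly flag transversal oscillation as ``precisely what the single layer of cells prevents,'' but that prevention is the entire content of the lemma (cf.\ Remark~\ref{rem:num_isoperim_ineq}(2): on general domains the inequality is destroyed exactly by compactly supported gradients), and your argument never actually establishes it. It cannot be established at the level of currents and nearest-point projection, because continuum bubbles do fit inside a one-cell-thick tube; what rules them out is the discrete structure, namely that a one-cell layer has no interior vertices or edges, so a $\feP 1$ potential whose gradient vanishes on $\partial\M_\delta$ is forced to be constant.

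This is also where your route diverges from the paper's. The paper argues by contradiction directly on the Nédélec degrees of freedom: if the mass diverges along a sequence while the curl and the trace on $A_\delta$ stay bounded, some edge degree of freedom $\dof(u_n,e_n)$ blows up; since each edge belongs to at least three facets and the facet-supported curl is bounded, the blow-up propagates to adjacent edges, and the one-cell thickness excludes the propagation closing up into a polyhedral ``bubble'' away from $A_\delta$; hence it reaches $A_\delta$, contradicting the bounded trace there. A secondary, more technical issue with your plan is the identification $\int_{\M_\delta}|u_h|\dx x=\MM(T_h)$ and $\int_{\M_\delta}|\curl(u_h)|\dx x=\MM(\partial T_h)$: the lemma is a statement about the full linear space of Nédélec functions supported in $\M_\delta$, most of which represent no rectifiable surface at all, so the finite-perimeter/complementation step on $\M$ should be replaced by a plain $L^1$ Poincaré inequality for densities on the closed surface (which does hold); but even with that fix the first step remains broken for the reason above.
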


Note that both sides of \eqref{lem:num_isoperim_ineq:eq} could be infinite.

\begin{proof}
We argue by contradiction and assume that for all $n\in\mathbb{N}$ there exists $h_n>0$ and functions $u_{n}\in\feNed$ such that
\begin{align*}
\int_{\M_\delta} |u_{n}| \dx x
\ &> \ 
n 
\left(\int_{\M_\delta} |\curl(u_{n})| \dx x
\ + \
\int_{A_h} |u_{n}| \dx x \right)
\, .
\end{align*}
We start with the following observations:
\begin{enumerate}[label=(\roman*)]
\item The mass of $u_n$ on $A_h$ as well as the $L^1-$norm of the curl of $u_n$ in $\M_\delta$ is bounded uniformly in $n$, otherwise \eqref{lem:num_isoperim_ineq:eq} holds trivially with the RHS being infinite.
\item It holds $\int_{\M_\delta} |\curl(u_{n})| \dx x +\int_{A_\delta} |u_{n}| \dx x > 0$, otherwise $u_{n}\equiv 0$ on $\M_\delta$ and thus \eqref{lem:num_isoperim_ineq:eq} holds trivially with the LHS being zero.
Indeed, assume that $u_n\not\equiv 0$ and $\int_{\M_\delta} |\curl(u_{n})| \dx x=0$. 
Then $u_n=\nabla\phi$ for some $\phi\in P^1$ and  since the thickness of $\M_\delta$ is of one cell and $\supp(u_n)\subset\M_\delta$, such a $\phi$ must have trivial gradient on all edges included in $\partial\M_\delta$.
Thus all connected components of $\partial\M_\delta$ are level sets of $\phi$ and hence, since $u_n$ is non-trivial, $\int_{A_h} |u_{n}| \dx x>0$ in contradiction to the assumption.
\item Furthermore, we can assume that there exists a constant $c>0$ such that 
$$
\limsup_{n\to\infty}\int_{\M_\delta} |\curl(u_{n})| \dx x +\int_{A_\delta} |u_{n}| \dx x 
\geq 
c > 0 
\, .
$$
Otherwise, arguing similarly to the previous case, one can show that $u_{n}\rightarrow 0$ and we can find a constant $\cst>0$ such that \eqref{lem:num_isoperim_ineq:eq} holds, in contradiction to the assumptions.
\end{enumerate}
It therefore holds that there exists a subsequence of $u_n$ (not relabelled) such that the mass of $u_{n}$ in $\M_\delta$ diverges to $\infty$ as $n\to\infty$. 
We can therefore find cells $T_{n}\in \T_{h_n}\restr\M_\delta$ such that
\begin{align*}
\frac{1}{h^2}\int_{T_{n}} |u_n|\dx x \to \infty
\end{align*}
as $n\to \infty$.
Indeed, if for all cells in $\M_\delta$ the expression $\frac{1}{h^2}\int_{T_{n}} |u_n|\dx x$ was uniformly bounded, then, using that the number of cells in $\M_\delta$ is of order $\frac{1}{h^2}$, 
\begin{align*}
\int_{\M_\delta} |u_{n}| \dx x
\ &= \
\sum_{T\in\T_{h_n}\cap\M_\delta} \int_T |u_n| \dx x
\ \lesssim \
\sum_{T\in\T_{h_n}\cap\M_\delta} h^2
\ \lesssim \
1
\, ,
\end{align*} 
which contradicts the diverging mass of $u_n$.

If the mass of $u_n$ on $T_n$ is diverging, there exists an edge $e_n\in\partial T_n$, $e_n\nsubseteq\partial \M_\delta$ with tangent $\tau_{e_n}$ such that $|u_n\cdot\tau_{e_n}|\to\infty$ as $n\to\infty$.
We identify $u_n\cdot\tau_{e_n}$ with the degree of freedom of the Nédéléc space and write $|\dof(u_n,e_n)|\to\infty$.

Since the curl of $u_n$ is uniformly bounded and $e_n$ is part of at least three facets of the mesh, there exist edges $\tilde{e}_{n,1},\tilde{e}_{n,2}\in \partial T_n$, adjacent, but not equal to $e_n$, such that $|\dof(u_n,\tilde{e}_{n,i})|\to\infty$ as $n\to\infty$ for $i=1,2$.
Furthermore, the edges $\tilde{e}_{n,i}$ must lie in the interior of $\M_\delta$ because of the assumption on the support of $u_n$.

With these two new edges each being part of at least three facets (two facets who do not contain $e_n$), we can find new edges $\tilde{e}_{n,i}$, $i=3,4,5,6$ in the interior of $\M_\delta$ such that $\tilde{e}_{n,i}\notin \{e_n,\tilde{e}_{n,1},\tilde{e}_{n,2}\}$ and $|\dof(u_n,\tilde{e}_{n,i})|\to\infty$ as $n\to\infty$, see Figure~\ref{fig:num_isoperim_ineq-1layer}.
We observe that since the thickness of $\M_\delta$ is only one cell, there cannot be any regions in which the edges that were selected form a ``bubble'' e.g.\ by being all connected to the same vertex in $\M_\delta$. 
This would mean that the facets of diverging $u_n$ form a polyhedron (with at least one interior vertex) but our assumption of a one-layer thickness excludes this possibility since a ``bubble'' requires a thickness of at least two layers, see also the illustration in Figure~\ref{fig:num_isoperim_ineq-2layer}.

Repeating this procedure, it follows that $|\dof(u_n,e)|\rightarrow \infty$ for all edges $e$ in $\M_\delta$ as $n\to\infty$, a contradiction to the finiteness of $\int_{A_\delta}|u_n|\dx x$. 
\end{proof}

\begin{figure}[H]
	\begin{subfigure}[c]{0.49\textwidth}
	\centering
	\includegraphics[scale=1.2]{./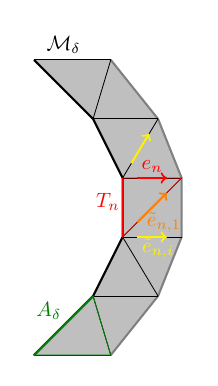}
    \caption{}
    \label{fig:num_isoperim_ineq-1layer}
    \end{subfigure}
	\begin{subfigure}[c]{0.49\textwidth}
	\centering
	\includegraphics[scale=1.2]{./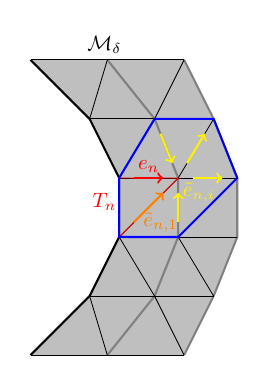}
    \caption{}
    \label{fig:num_isoperim_ineq-2layer}
    \end{subfigure}
\caption{\eqref{fig:num_isoperim_ineq-1layer} Visualization of choices of $e_n$ and $\tilde{e}_{n,i}$ in the proof of Lemma~\ref{lem:num_isoperim_ineq}. After finitely many steps an edge in $A_\delta$ is reached.
\eqref{fig:num_isoperim_ineq-2layer} Possible ``bubble'' in $\M_\delta$ having a thickness of \emph{two layers} which allows for diverging mass while edges stay localized in one part of $\M_\delta\setminus A_\delta$. Such configurations are not possible in our situation due to the assumption of a one-layer thickness of $\M_\delta$.}
\label{fig:num_isoperim_ineq_mesh_bubble}
\end{figure}

\begin{remark}\label{rem:num_isoperim_ineq}
\begin{enumerate}
\item The compactness lemma can be seen as a Poincaré-type inequality. Note that the term $\int_{A_h} |u_h|\dx x$ in the RHS of \eqref{lem:num_isoperim_ineq:eq} is only used once to deduce for the implication that if the RHS was zero, so was $u_h$. In this regard, one could replace $\int_{A_h} |u_h|\dx x$ by other terms, e.g.\ the sum of the degrees of freedom on selected edges. 
\item Note that an inequality such as \eqref{lem:num_isoperim_ineq:eq} cannot hold on general domains, see e.g.\ \cite{Ern2024}. 
This inequality is only possible due to the special geometry (i.e.\ thickness of one layer) of $\M_\delta$, allowing to rule out the possibility of $u_h$ being a gradient of a function supported outside of $A_\delta$.
\item Another way to see this lemma is as a finite element analogue of the continuous isoperimetric inequality in \cite{Pauw2022}. In fact, we could have also modified the statement of the lemma so that for $u_h\in\feNed$ there exists $v_h\in\feNed$ with $|\curl(u_h)|=|\curl(v_h)|$ and 
\begin{align*}
\int_{\M_h} |v_h| \dx x
\ &\leq \ 
\cst \int_{\M_h} |\curl(u_h)| \dx x
\, .
\end{align*}
\end{enumerate}
\end{remark}

\begin{proposition}\label{prop:num_G_conv}
Let $\mathcal{T}_h$ be a mesh of $\Omega$ such that $\M_\delta$ has a thickness of one cell. 
Then, the energy $\E_{h,\delta}$ $\Gamma-$converges to the energy in \eqref{pb:lim_currents_Gam_M} with respect to the weak*-topology of measures.
\end{proposition}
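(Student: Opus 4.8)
The plan is to prove the two $\Gamma$-convergence inequalities, preceded by the compactness statement that makes them meaningful. Throughout I identify $u_h\in\feNed$ with the vector measure $u_h\,\mathrm{d}x\in\rca(K)^3$ and denote by $\E$ the limit functional $\E(u)=\int_\Omega\mathrm{d}\Vert u\Vert+\int_\M|\nu_3|\,\mathrm{d}\Vert u\Vert+\beta\,\intc(u+u_0,K)$, so that \eqref{pb:lim_currents_Gam_M} reads $\inf_u\E(u)$, and I take $\delta=h\to 0$. \textbf{Compactness:} suppose $\sup_h\E_{h,\delta}(u_h)<\infty$. Away from the layer one has $\rho_\delta\equiv 1$, so the first term of \eqref{def:func_Ehd} bounds the mass of $u_h$ on $\Omega_\delta$; over the part of $\M$ where $|\nu_3|\geq c>0$ the weight is bounded below, so the mass there is controlled as well. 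The only place mass could concentrate is the layer sitting over $\Gamma=\{\nu_3=0\}$, and this is excluded by the one-cell Lemma~\ref{lem:num_isoperim_ineq}: applying it (after localising $u_h$ to $\M_\delta$ by zeroing the interface degrees of freedom, whose contribution is controlled away from $\Gamma$) with $A$ an open subset of $\M$ bounded away from $\Gamma$ bounds the layer mass by $\int_{\M_\delta}|\curl(u_h)|\,\mathrm{d}x+\int_{A_\delta}|u_h|\,\mathrm{d}x$. The first summand is bounded through the $\beta$-term after subtracting the uniformly bounded mass of $\curl(u_{0,\delta,h})$, the second through the non-degenerate weight on $A_\delta$. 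This yields a uniform bound on $\Vert u_h\Vert(K)$, hence weak* compactness, and the constraint $u_h\equiv 0$ in $E$ passes to the limit.

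\textbf{Liminf inequality:} fix $u_h\stackrel{*}{\rightharpoonup}u$ and treat the two contributions by duality. For the boundary term, let $\phi\in C_c^\infty(K,\mathbb{R}^3)$ with $\Vert\phi\Vert_\infty\leq 1$; since $u_h\in\feNed\subset H(\curl)$, Green's formula gives $\int_K\phi\cdot(\curl(u_h)+\curl(u_{0,\delta,h}))\,\mathrm{d}x=\int_K\curl(\phi)\cdot(u_h+u_{0,\delta,h})\,\mathrm{d}x$, and because $\curl(\phi)$ is continuous and $u_h\stackrel{*}{\rightharpoonup}u$, $u_{0,\delta,h}\stackrel{*}{\rightharpoonup}u_0$, the right-hand side tends to $\int_K\curl(\phi)\cdot\mathrm{d}(u+u_0)$; bounding the left-hand side by $\int_K|\curl(u_h)+\curl(u_{0,\delta,h})|\,\mathrm{d}x$ and taking the supremum over $\phi$ recovers $\beta\,\intc(u+u_0,K)$. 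For the mass term I would use that $\Pi_{\feP 0}(u_h)\,\mathrm{d}x$ shares the weak* limit $u$ and test against continuous $\psi$ with $|\psi|\leq 1$ on $\overline\Omega$ and $|\psi|\leq|\nu_3|$ on $\M$: by continuity $|\psi(x)|\leq\rho_\delta(x)+o(1)$ uniformly on the collapsing layer, whence $\int_K\psi\cdot\Pi_{\feP 0}(u_h)\,\mathrm{d}x\leq\int_K\rho_\delta|\Pi_{\feP 0}(u_h)|\,\mathrm{d}x+o(1)$ using the uniform mass bound; passing to the limit and taking the supremum over such $\psi$ gives $\int_\Omega\mathrm{d}\Vert u\Vert+\int_\M|\nu_3|\,\mathrm{d}\Vert u\Vert$, the admissible weight being lower semicontinuous.

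\textbf{Limsup inequality:} I would first reduce, by a diagonal argument, to a dense class of admissible $u$ on which $\E$ is continuous — polyhedral $2$-chains supported in $\overline\Omega$ together with their on-$\M$ parts — and construct a recovery sequence there. For such $u$ the candidate is the Nédélec interpolant $u_h=\Pi_{\feNed}(u)$, truncated so that $u_h\equiv 0$ in $E$; the commuting diagram of Proposition~\ref{prop:exact_seq_FE} gives $\curl(u_h)=\Pi_{\feRT}(\curl u)$, so that $\curl(u_h)+\curl(u_{0,\delta,h})$ represents $\partial T+\Gamma$, while the standard interpolation estimates yield $u_h\stackrel{*}{\rightharpoonup}u$ and convergence of the weighted mass term to $\int_\Omega\mathrm{d}\Vert u\Vert+\int_\M|\nu_3|\,\mathrm{d}\Vert u\Vert$.

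\textbf{Main obstacle:} the delicate step is the behaviour of the recovery sequence at $\M$ and, above all, at $\Gamma$. One must realise the part of $T$ lying on $\M$ inside the one-cell layer $\M_\delta$ with precisely the weight $|\nu_3|$ furnished by $\rho_\delta$, and simultaneously ensure that the discrete sum $\curl(u_h)+\curl(u_{0,\delta,h})$ has $L^1$-norm converging to $\MM(\partial T+\Gamma)=\intc(u+u_0,K)$ rather than to $\MM(\partial T)+\MM(\Gamma)$; this requires the cancellation encoded by the $\mathbb{Z}_2$ structure where $\partial T$ meets $\Gamma$ to survive discretisation. I would handle it by placing the on-$\M$ part of $u$ into $\M_\delta\setminus\M_{\delta/4}$, consistently with the support and placement of $u_{0,\delta}$, using the continuity of $|\nu_3\circ\Pi_\M|$ to control the weight and the one-layer geometry to prevent spurious mass from being trapped over $\Gamma$ — the same geometric feature that drives Lemma~\ref{lem:num_isoperim_ineq}. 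Verifying that no weight is lost at the degenerate locus $\Gamma$, where $\rho_\delta\to 0$, is the crux and mirrors the continuous well-posedness argument of Proposition~\ref{prop:well_posed_pb}.
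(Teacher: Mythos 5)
Your compactness and liminf arguments follow essentially the same route as the paper: the mass is controlled away from the layer and away from $\Gamma$ by the weight, the residual concentration over $\Gamma$ is excluded by Lemma~\ref{lem:num_isoperim_ineq}, and both the curl term and the weighted mass term are handled by duality against smooth test fields (the paper justifies the Green formula for Nédélec fields by summing cell-wise integrations by parts and using tangential continuity across facets, which is the content of your appeal to $H(\curl)$). These parts are fine.

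The recovery sequence is where your proposal genuinely diverges from the paper, and where it has a gap. You propose to reduce to polyhedral $2$-chains by density and take $u_h=\Pi_{\feNed}(u)$. But the object $u$ representing a polyhedral chain is the singular measure $\nu_T\,\mathcal{H}^2\restr T$, and the Nédélec interpolation operator is not defined on it: its degrees of freedom are the edge integrals $\int_e u\cdot t_e$, and a measure carried by a $2$-dimensional set assigns no mass to (almost every) edge, so the ``interpolant'' is either undefined or identically zero. Even for functions, $\Pi_{\feNed}$ requires more regularity than $L^2$ for the edge traces to make sense, so the commuting-diagram identity $\curl(\Pi_{\feNed}u)=\Pi_{\feRT}(\curl u)$ you invoke is not available here. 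This is precisely why the paper does something else: it first pushes $u$ away from $\M$ via $\varphi_\kappa(x)=x-\kappa n(x)$, mollifies at scale $\kappa r$ to obtain a genuine function $u_\kappa$, and only then discretizes by the $L^2$-projection $P_h$ onto $\feNed$ --- the text explicitly remarks that the projection, and \emph{not} the interpolation operator, is used. Separately, your reduction step presupposes that polyhedral chains are dense in energy for $\E$, which, given the obstacle constraint $u\equiv 0$ in $E$ and the degenerate weight $|\nu_3|$ on $\M$, is itself a nontrivial approximation statement that you would have to prove; the paper's construction avoids it by working directly with an arbitrary finite-mass admissible measure. Your closing discussion correctly identifies the delicate cancellation in $\curl(u_h)+\curl(u_{0,\delta,h})$ near $\Gamma$, but as written the construction it is meant to apply to does not get off the ground.
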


\begin{proof}
The proof is divided into three part: Compactness, lower bound and construction of a recovery sequence to prove the upper bound.

\bigskip

\noindent\textit{\underline{Compactness.}}
Let $u_h\in \feNed$ and $u_{0,\delta,h}\in\feNed$ be the previously described approximation for the tangential vector field $\tau_\Gamma$ with respect to the weak*-topology of vector valued measures such that $\E_{h,\delta}\leq C$.
We want to show that a subsequence of $u_h$ converges weakly* to a limit measure $u\in\rca(K)^3$.
We note that a function $v$ of the Nédélec space on a tetrahedron $T\in\mathcal{T}_h$ can be written as 
\begin{align}\label{prop:num_G_conv:Ned_on_tetra} 
v|_T(x) = v(c_T) + \frac12\curl(v|_T)\times (x-c_T) \, ,
\end{align}
where $c_T$ is the center of mass of the cell $T$, $\curl(v|_T)$ is a constant vector on each cell and $v(c_T) = \dashint_T v\dx x$.
We can hence estimate $|u_h|_T - \Pi_{P^0}(u_h)|_T| \leq C h |\curl(u_h)|_T|$ which gives
\begin{align*}
\int_K |u_h - \Pi_{P^0}(u_h)| \dx x
\ &= \ 
\sum_{T\in\mathcal{T}_h} \int_T |u_h|_T - \Pi_{P^0}(u_h)|_T| \dx x \\
\ &\leq \ 
C h \sum_{T\in\mathcal{T}_h} \int_T |\curl(u_h)|_T| \dx x 
\ = \ 
C h \int_K |\curl(u_h)| \dx x \, .
\end{align*}
This allows to conclude that $u_h - \Pi_{P^0}(u_h)\overset{*}{\rightharpoonup} 0$ in the space of measures as $h\rightarrow 0$.
Now, for some $\varepsilon>0$, we decompose the integral
\begin{align*}
\int_K |u_h| \dx x
\ &= \
\int_{K\setminus \M_\delta} |u_h| \dx x
+ \int_{\M_\delta\setminus B_\varepsilon(\Gamma)} |u_h| \dx x
+ \int_{\M_\delta\cap B_\varepsilon(\Gamma)} |u_h| \dx x
\ =: \
I_1 + I_2 + I_3
\, ,
\end{align*}
and estimate each term individually.
The first term is bounded via
\begin{align*}
I_1
\ &\leq \
\int_{K\setminus \M_\delta} |\Pi_{P^0}(u_h)| \dx x \ + \ \int_{K\setminus \M_\delta} |u_h - \Pi_{P^0}(u_h)| \dx x \\
\ &\leq \
\E_{h,\delta}(u_h) + C h \int_K |\curl(u_h)| \dx x 
\ \leq \
(1+C h)\E_{h,\delta}(u_h) + C h \mathcal{H}^1(\Gamma)
\, .
\end{align*}
Following the proof in the continuous setting (Proposition~\ref{prop:well_posed_pb}), and slightly deforming $u_h$ is necessary, we choose $\varepsilon>0$ such that the mass of $\curl(u_h\restr\M_\delta)$ is bounded by $\frac{C}{\varepsilon}\E_{h,\delta}(u_h)$.
For $I_2$ we note that $|\nu_3(\Pi_\M(x))|\gtrsim \varepsilon$ if $\dist(\Pi_\M(x),\Gamma)>\varepsilon$ and so
\begin{align*}
I_2
\ &\leq \
\int_{\M_\delta\setminus B_\varepsilon(\Gamma)} |\Pi_{P^0}(u_h)| \dx x 
\ + \ \int_{\M_\delta\setminus B_\varepsilon(\Gamma)} |u_h - \Pi_{P^0}(u_h)| \dx x \\
\ &\leq \
\frac{C}{\varepsilon}\E_{h,\delta}(u_h) + C h \int_{\M_\delta} |\curl(u_h)| \dx x 
\ \leq \
\Big(\frac{C}{\varepsilon} + C h\Big)\E_{h,\delta}(u_h) + C h \mathcal{H}^1(\Gamma)
\, .
\end{align*}
Finally, we use Lemma~\ref{lem:num_isoperim_ineq} and the bound for $I_2$ to get
\begin{align*}
I_3
\ &\leq \
\cst \left(\int_{\M_\delta} |\curl(u_h)| \dx x
\ + \
\int_{\M_\delta\setminus B_\varepsilon(\Gamma)} |u_h| \dx x \right)
\ \leq \
\cst\Big(1 + \frac{C}{\varepsilon} + C h\Big)\E_{h,\delta}(u_h) + C h \mathcal{H}^1(\Gamma)
\, .
\end{align*}
This implies that the mass of $u_h$ on the whole set $K$ is bounded by
\begin{align*}
\int_K |u_h| \dx x
\ &\lesssim \
\Big(1 + \frac{1}{\varepsilon} + h\Big) \E_{h,\delta}(u_h) + h \mathcal{H}^1(\Gamma)
\, .
\end{align*} 
We can therefore assume that (up to a subsequence) $u_h \overset{*}{\rightharpoonup} u$ as measures for some $u\in\rca(K)^3$ with support outside of $E$.

\bigskip

\noindent\textit{\underline{Lower bound.}}
For the liminf inequality, take $\phi\in C_c^\infty(K)^3$ with $|\phi|_\infty\leq 1$ to get
\begin{align*} 
\int_K \phi\cdot\curl(u_h + u_{0,\delta,h})\dx x
\ &= \  
\sum_{T\in\mathcal{T}_h}  \int_T \phi\cdot\curl(u_h + u_{0,\delta,h})\dx x \, .
\end{align*}
Integration by parts on each mesh cell $T\in\mathcal{T}_h$ yields
\begin{align*}
\sum_{T\in\mathcal{T}_h}  \int_T \phi\cdot\curl(u_h + u_{0,\delta,h})\dx x
\ &= \ 
\sum_{T\in\mathcal{T}_h} \int_T \curl(\phi)\cdot (u_h + u_{0,\delta,h})\dx x \\
\qquad &+ \ \sum_{T\in\mathcal{T}_h} \int_{\partial T} \phi\cdot ((u_h + u_{0,h})\times\nu_{\partial T})\dx x \, .
\end{align*} 
We note that on boundary facets it holds $u_h = u_{0,\delta,h} = 0$.
Furthermore, in the sum over all cells boundaries of the above calculation each interior facets appears exactly twice but with opposite signs for the normal vector. 
Since the tangential components of Nédélec-functions are continuous across facets, it follows that the second sum on the right hand side vanishes.
Thus,
\begin{align}\label{prop:num_G_conv:IPP_Ned}
\int_K \phi\cdot\curl(u_h + u_{0,\delta,h})\dx x
\ &= \
\int_K \curl(\phi)\cdot(u_h + u_{0,\delta,h})\dx x \, .
\end{align}
We can use the weak*-convergence of $u_h$ and $u_{0,\delta,h}$ and take the supremum in $\phi$ to get
\begin{align*}
\liminf_{h,\delta\rightarrow 0} \int_K |\curl(u_h) + \curl(u_{0,\delta,h})| \dx x
\ &\geq \
\intc(u+u_{0},K)\, .
\end{align*}
The decomposition of the mass 
\begin{align*}
\int_K \rho_\delta|\Pi_{\feP 0}(u)| \dx x 
\ &= \
\int_{\M_\delta} \rho_\delta|\Pi_{\feP 0}(u)| \dx x 
\ + \ 
\int_{\Omega\setminus\M_{\delta}} |\Pi_{\feP 0}(u)| \dx x 
\, .
\end{align*}
implies, together with the weak* convergence of $u_h$ 
as $h\to 0$ that
\begin{align*}
\liminf_{h,\delta\to 0} \int_K \rho_\delta|\Pi_{\feP 0}(u)| \dx x 
\ &\geq \
\int_{\M} |\nu_3| \dx \Vert u\Vert
\ + \
\int_\Omega \dx\Vert u\Vert
\, ,
\end{align*}
which concludes the proof of the lower bound.

\bigskip

\noindent\textit{\underline{Recovery sequence.}}
For the upper bound, let $u$ be a vector valued measure of finite mass in $K$.
We assume that $u\equiv 0$ in $E$, otherwise the result is trivial.
We regularize $u$ in a manner similar to \cite{Ern2015}.
The idea is to push $u$ away from the surface $\M$ and then apply a convolution.

In order to illustrate the approach, let us assume that $u$ is a function, $\M$ a sphere and $K$ the exterior domain.
Then we could simply define $\widetilde{u}(x) := u(x + \kappa\ee_r)$ and then pose $u_\kappa(x) := \int_{B_1(0)}\zeta(y)\widetilde{u}(x+\kappa y)\dx y$, where $\zeta\in C_c^\infty(B_1(0),[0,1])$ is the standard radially symmetric convolution kernel with $\int_{B_1(0)}\zeta(x)\dx x=1$.
In the above, $\kappa\in (0,1]$ is the regularization parameter.
This definition encodes a convolution at scale $\kappa$ and is well-defined since for $x\in\Omega$ and $x\in B_1(0)$ it holds $(x+\kappa y) + \kappa\ee_r \in \Omega$.

In the general case, one has to be more careful and take into account the shape of $\M$ and the fact that $u$ is only a measure.
We introduce as in \cite{Ern2015} the function $\varphi_\kappa(x) = x-\kappa n(x)$, where $n$ is a smooth unit vector field, transversal to $\partial\Omega$ that is used to ``push'' the position $x$ into the domain $\Omega$.
A radius $r>0$ is chosen based on the uniform cone property such that $\varphi_\kappa(\Omega)+ B_{\kappa r}(0)\subset\Omega$ for all $\kappa\in [0,1]$.
We then define 
\begin{align*}
u_\kappa(x)
\ := \
(\kappa r)^{-n}\int_{B_{\kappa r}(\varphi_\kappa(x))}\zeta\Big(\frac{z-\varphi_\kappa(x)}{\kappa r}\Big)\dx u(z)
\, .
\end{align*}
Note that this definition agrees with the previous idea if $u$ has a density with respect to the Lebesgue measure.
As $\kappa\to 0$, this function $u_\kappa$ weakly* converges to $u$ as a measure.
%

We define the discretized function $u_{\kappa,h}$ via the $L^2-$projection $P_h$ (and not the interpolation operator $\Pi_{\feNed}$) 
\begin{align*}
u_{h,\kappa}
\ &:= \
P_h(u_\kappa)
\ := \
\argmin{v_h\in\feNed} \Vert v_h - u_\kappa\Vert_{L^2}^2
\, .
\end{align*}
We will use later that $\int_\Omega (P_h(u_\kappa) - u_\kappa)v_h \dx x = 0$ for any $v_h\in\feNed$.

In order to show weak* convergence of $u_{\kappa,h}$ to $u$, let $\phi\in C^\infty(\Omega)$.
We compute
\begin{align}
\int_\Omega & u_{h,\kappa}(x) \cdot\phi(x) \dx x
\ - \
\int_\Omega \phi(x)\cdot \dx u(x) \nonumber \\
\ &= \
\int_\Omega (P_h(u_\kappa)(x) - u_\kappa(x))\cdot \phi(x) \dx x
+
\int_\Omega u_\kappa(x)\cdot\phi(x) \dx x - \int_\Omega \phi(x) \cdot\dx u(x)\label{prop:num_G_conv:upper_cpt}
\, .
\end{align} 
For the first term on the RHS of \eqref{prop:num_G_conv:upper_cpt} we note that, using the interpolation of $\phi$ in $\feNed$, that
\begin{align*}
\int_\Omega (P_h(u_\kappa) - u_\kappa)\cdot \phi \dx x
\ &= \
\int_\Omega (P_h(u_\kappa) - u_\kappa)\cdot \Pi_{\feNed}(\phi) \dx x
+
\int_\Omega (P_h(u_\kappa) - u_\kappa)\cdot (\phi - \Pi_{\feNed}(\phi)) \dx x
\, .
\end{align*}
The first term is zero by orthogonality of the projection $P_h$ since the interpolation $\Pi_{\feNed}(\phi)\in\feNed$ and for the second term we estimate using Hölder's inequality
\begin{align}\label{prop:num_G_conv:estim_Phuk_uk_phi__Piphi}
\Big|\int_\Omega (P_h(u_\kappa) - u_\kappa)\cdot (\phi - \Pi_{\feNed}(\phi)) \dx x\Big|
\ &\leq \
\Vert P_h(u_\kappa) - u_\kappa \Vert_{L^1} \Vert \phi - \Pi_{\feNed}(\phi) \Vert_{L^\infty}
\, ,
\end{align}
which vanishes in the limit $h\to 0$ since the mass of $u$ is finite and $\Pi_{\feNed}(\phi)\to\phi$ uniformly as $h\to 0$.

The remaining two terms on the RHS of \eqref{prop:num_G_conv:upper_cpt} converge to zero by weak* convergence $u_\kappa\to u$.

The convergence of the mass can be shown as follows:
%
Let $\phi\in C^\infty(\Omega)$, $\Vert\phi\Vert_\infty\leq 1$.
Using the continuity of $\phi$ and estimating as in \eqref{prop:num_G_conv:upper_cpt} and \eqref{prop:num_G_conv:estim_Phuk_uk_phi__Piphi}, we get that
\begin{align*}
\int_{\Omega_\delta} \Pi_{\feP0}(u_{h,\kappa})(x) \phi(x)\dx x
\ &= \
\sum_{T\in\T_h\cap{\Omega_\delta}} \int_T \Big(\dashint_T u_{h,\kappa} \dx y\Big) \phi(x) \dx x
\ = \
\sum_{T\in\T_h\cap{\Omega_\delta}} \int_T \Big(\dashint_T \phi(x) \dx x\Big) u_{h,\kappa}(y) \dx y \\
\ &= \
\int_{\Omega_\delta} \phi(y)u_{h,\kappa}(y) \dx y + o(1)
\ = \
\int_{\Omega_\delta} \phi(y)u_{\kappa}(y) \dx y + o(1)
\, .
\end{align*}
Using the notation $\zeta_\kappa = (r\kappa)^{-n}\zeta(\frac{\cdot}{\kappa r})$, we find that
\begin{align*}
\int_{\Omega_\delta} \phi(y)u_{\kappa}(y) \dx y
\ &= \
\int_{\Omega_\delta} \phi(y)\int_{\mathbb{R}^3} \zeta_\kappa(z-\varphi_\kappa(y)) \dx u(z) \dx y
\ = \
\int_{\mathbb{R}^3} \int_{\Omega_\delta} \phi(y)\zeta_\kappa(z - \varphi_\kappa(y)) \dx y \dx u(z) \\
\ &= \
\int_{\mathbb{R}^3} \phi\tilde{*}\zeta_\kappa \dx u(z)
\, ,
\end{align*}
where $\phi\tilde{*}\zeta_\kappa$ denotes the convolution-type operation $\int_{\Omega_\delta} \phi(y)\zeta_\kappa(\cdot - \varphi_\kappa(y)) \dx y$.
Since $\Vert\phi\Vert_\infty\leq 1$, $\int\zeta_\kappa=1$ and $D\varphi_\kappa = \id + O(\kappa)$, we also have that $\Vert\phi\tilde{*}\zeta_\kappa\Vert_\infty\leq 1 + C\kappa$ and hence $\frac{1}{1 + C\kappa}\phi\tilde{*}\zeta_\kappa$ is an admissible test function for the mass of $u$, thus
\begin{align*}
\int_{\Omega_\delta} \phi(y)u_{\kappa}(y) \dx y
\ \leq \
(1+C\kappa)\int_\Omega\dx\Vert u\Vert
\, .
\end{align*}

For the integral $\int_{\M_\delta} \rho_\delta |\Pi_{\feP 0}(u_{h,\kappa})| \dx x$, we note that by continuity of $\nu$ and using the fact that $u_{h,\kappa}$ was constructed using a convolution it holds
\begin{align*}
\limsup_{h\to 0} \int_{\M_\delta} \rho_\delta |\Pi_{\feP 0}(u_{h,\kappa})| \dx x
\ &\leq \
\int_\M |\nu_3| \dx\Vert u\Vert\, .
\end{align*}
It remains the estimate of $\int_K |\curl(u_{h,\kappa}+u_{0,\delta,h})|\dx x$.
The idea is the same as in the lower bound.
We rewrite the absolute values as a supremum using functions $\phi\in C_c^\infty(K)^3$ with $|\phi|_\infty\leq 1$ and use the formula in \eqref{prop:num_G_conv:IPP_Ned}.
We then get 
\begin{align*}
\limsup_{h,\kappa\to 0} \int_K |\curl(u_{h,\kappa}+u_{0,\delta,h})|\dx x
\ &= \ 
\sup_{\substack{\psi\in\C_c^\infty(K) \\ \Vert\phi\Vert_\infty\leq 1}}\int_K \curl(\phi)\cdot (u+u_{0})\dx x 
\ = \ \intc(u+u_0,K) \, ,
\end{align*}
where we used 
the weak* convergences $u_\kappa \stackrel{*}{\rightharpoonup} u$ and $u_{0,\delta} \stackrel{*}{\rightharpoonup} u_0$ as $h\rightarrow 0$.

\end{proof}

\subsection{ADMM-algorithm}
\label{subsec:admm}


To minimize the energy in \eqref{def:func_Ehd} numerically, we employ the Alternating Direction Method of Multipliers (ADMM) \cite{Gabay1976}.
This algorithm requires our problem to be of the form \cite[Sec. 3]{Boyd2010}
\begin{align}\label{pb:admm}
\min_{x\in\mathbb{R}^{n_x}\, , \, y\in\mathbb{R}^{n_y}} F(x) + G(y) \quad\text{subject to } Ax + By = c\in\mathbb{R}^{n_c}\, .
\end{align}
Then, one performs alternating minimizations in $x$ and $y$ as well as updates of the dual variable $z$ of the augmented Lagrangian
\begin{align*}
L(x,y,z) 
\ &= \ F(x) + G(y) + z^\top(Ax+By-c) + \frac{\varpi}{2}\Vert Ax+By-c\Vert^2\, .
\end{align*}

We first reformulate \eqref{def:func_Ehd} to fit into this scheme.
With a small abuse of notation, we identify the finite element functions and their vectors of degrees of freedom.
We set $x:= u$, $y\defi(p,q)$, $z\defi(\lambda,\mu)$ and define the density functions
\begin{align*}
p_\mathrm{max}(x)
\defi
\begin{cases}
1 & \text{if } x\in \Omega_h\, , \\
|\nu_3(x)| & \text{if } x\in \M_h\, , \\ 
+\infty & \text{otherwise,}
\end{cases}
\quad\text{ and }\quad
q_\mathrm{max}(x) 
\defi
\begin{cases}
\beta & \text{if } x\in \Omega\, , \\
+\infty & \text{otherwise.}
\end{cases}
\end{align*}
Defining the objective functions $F$ and $G$ 
\begin{align*}
F(u) \defi 0  \qquad G(p,q)\defi \Vert p_\mathrm{max} p \Vert_{L^1} \ + \ \Vert q_\mathrm{max} q\Vert_{L^1}\, ,
\end{align*}
subject to the constraints 
\begin{align*}
\begin{pmatrix} \id \\ \curl \end{pmatrix}
u
\ + \ 
\begin{pmatrix} -\id & 0 \\ 0 & -\id \end{pmatrix} 
\begin{pmatrix} p \\ q \end{pmatrix}
\ = \ 
\begin{pmatrix} 0 \\ \curl(u_0) \end{pmatrix}\, ,
\end{align*}
we transformed the minimization of \eqref{def:func_Ehd} into a problem suitable for ADMM.
We furthermore write $\varpi=(\gamma_M,\gamma_C)$ for the stepsizes used inside the algorithm.
Our ADMM scheme is summarized in Algorithm~\ref{alg:admm_E0}.

The minimization procedures in Algorithm~\ref{alg:admm_E0} are carried out using the optimality conditions and solving the associated linear systems.
More precisely, for the minimization in line 2 of Algorithm~\ref{alg:admm_E0} we solve the weak formulation
\begin{align} \label{eq:alg_sol_lin_eq}
0 
\ &= \ 
\gamma_C \langle \curl(u), \curl(v)\rangle + \gamma_M\langle u, v\rangle - \langle (\lambda+\gamma_C p), \curl(v)\rangle - \langle (\mu+\gamma_M q), v\rangle \quad \forall v\in\feNed \, .
\end{align}
Line 3 and 4 are computed by 
\begin{align*}
\begin{cases}
p \ = \ \frac{1}{\gamma_M}\left( \Big(\max\{ |\overline{p}|/p_\mathrm{max} , 1 \}\Big)^{-1} - 1 \right)\overline{p}\, , & \qquad\overline{p} = \lambda - \gamma_M \Pi_{\fePP{0}}(u)\, , \\
q \ =\  \frac{1}{\gamma_C}\left( \Big(\max\{ |\overline{q}|/q_\mathrm{max} , 1 \}\Big)^{-1}  - 1\right)\overline{q}\, , & \qquad\overline{q} = \mu - \gamma_C \curl(u) \, .
\end{cases}
\end{align*}

Numerous modifications for speeding up ADMM are known \cite{Kim2021}.
We tested the "Accelerated Alternating Direction Method of Multipliers" from \cite{Kim2021}, the Nesterov acceleration as described in \cite{Goldstein2014} as well as a simple over-relaxation scheme, finding that the latter leads to a faster convergence, while the others two had no noticeable influence on the speed.

\begin{algorithm}
\caption{ADMM algorithm for minimizing $\E_0$}
\label{alg:admm_E0}
\begin{algorithmic}[1] 
\Param $\gamma_M,\gamma_C>0$ (step sizes), $h>0$ (mesh size), $\beta\geq 0$ (weight for $\MM(S)$), $\phi\in [0,\pi]$ (angle of rotation), $w_E\gg 1$ (penalization weight inside $E$), $\varepsilon>0$ (to avoid zero divisions)
\Init $p,q,\lambda,\mu \gets 0\in \fePP 0$
\Init $u \gets 0\in\feNed$
\Init $q_\mathrm{max} \gets \Pi_{\fePP 0}(\beta\chi_\Omega + w_E \chi_E) $
\Init $p_\mathrm{max} \gets \Pi_{\fePP 0}(1\, \chi_{\Omega\setminus\M_h} + \max\{|\nu\cdot\HH|, \varepsilon\}\chi_{\M_h} + w_E \chi_E) $
\For{$k = 1,2,...$}
	\State $u \gets \argmin{u}\left\{ -\langle\lambda, u\rangle - \langle\mu, \curl(u)\rangle + \frac{\gamma_C}{2}\Vert q-\curl(u)\Vert^2_{L^2} + \frac{\gamma_M}{2}\Vert p-u\Vert^2_{L^2}\right\} $
	\State $q \gets \argmin{q}\left\{ \Vert q_\mathrm{max} q \Vert_{L^1} + \langle\lambda,q\rangle + \frac{\gamma_C}{2}\Vert q-\curl(u)\Vert^2_{L^2}\right\} $
	\State $p \gets \argmin{p}\left\{ \Vert p_\mathrm{max} p \Vert_{L^1} + \langle\mu,p\rangle + \frac{\gamma_M}{2}\Vert p-u\Vert^2_{L^2}\right\} $
	\State $\lambda \gets \lambda + \gamma_C (q - \curl(u) - \curl(u_0))$
	\State $\mu \gets \mu + \gamma_M ( p - \Pi_{\fePP 0}(u))$
\EndFor
\Out $E_h = \Vert p_\mathrm{max} p \Vert_{L^1(\Omega)} + \beta\Vert q\Vert_{L^1(\Omega)}$
\end{algorithmic}
\end{algorithm}

\subsection{Implementation}

The finite element discretization and implementation of Algorithm~\ref{alg:admm_E0} has been carried out using \emph{FEniCS} \cite{Fenics2012,Fenics2015}, see also \cite{FenicsTutorial2016} for an introduction. 
For generating the meshes we use the program GMSH \cite{gmsh2020}. 
The visualization of the results is realised using ParaView \cite{Paraview2015}.

\paragraph{Solving equations and projections.} 
We perform the projections in Algorithm~\ref{alg:admm_E0} by solving the linear system associated to the $L^2-$orthogonality relation verified by the projection.
All matrices for the linear systems, including \eqref{eq:alg_sol_lin_eq} are independent of the iteration step, and hence we assemble them only once before the iteration starts.
For solving the system, we apply a solver using a $LU-$decomposition also calculated once in the beginning and assembling only the right hand side in each iteration.

\paragraph{Mesh and representation of $\Gamma$.}
From \cite{ACS2024} we know that minimal configurations of $T$ and $S$ are concentrated inside the convex envelope of the particle. 
This is why we decided to create a mesh adapted to this situation, i.e.\ a fine mesh close to the surface $\M$ and inside the convex envelope, while far from $\M$ the mesh can be coarser, see Figure~\ref{fig:mesh_M_P}.
In our case we choose the mesh size $h$ to be between $0.03$ on $\M$ and $0.3$ at the boundary of the box.
The simulations in which we investigate the influence of the angle between particle orientation and external field $\mathbf{H}$ (previously chosen to be $\ee_3$) are conducted by changing the vector $\mathbf{H}$, thus allowing to use the same mesh for all simulations of a particle and only adapting the energy by using the surface energy density $|\nu\cdot\mathbb{H}|$ and $\Gamma=\{\nu\cdot\mathbb{H}=0\}\subset\M$.
In order to obtain an accurate approximation, we also include the boundary layer $\M_h$ into the mesh generation.
We choose the thickness of $\M_h$ to be equal to one cell, which gives satisfactory results from our experiences in accordence with Proposition~\ref{prop:num_G_conv}, see Figure~\ref{fig:mesh_M_P-b}.
We can furthermore perform a cut-out of our mesh, i.e.\ remove the cells inside $E$ from our simulation, which significantly increases the speed of our code. 
In the case of the peanut mesh, this reduces the number of cells by more than $40 \%$, see Figure~\ref{fig:mesh_M_P-a}. 
We maintain a small layer inside $E$ which is used to prevent the surface and its boundary to enter the particle and to create the initial condition: 
Using the level-set function of the shape, we calculate the normal field $\nu$ which allows us to define the function $\nu\cdot\mathbf{H}$.
Then, we set $u_0\defi \chi_E\nabla(\nu\cdot\mathbf{H})$.
The vector field $\curl(u_0)$ serves as approximation for $\tau_\Gamma$ as seen in Section~\ref{sec:theory}.

\begin{figure}[H]
	\begin{subfigure}[c]{0.66\textwidth}
	\centering
	\includegraphics[scale=0.19]{./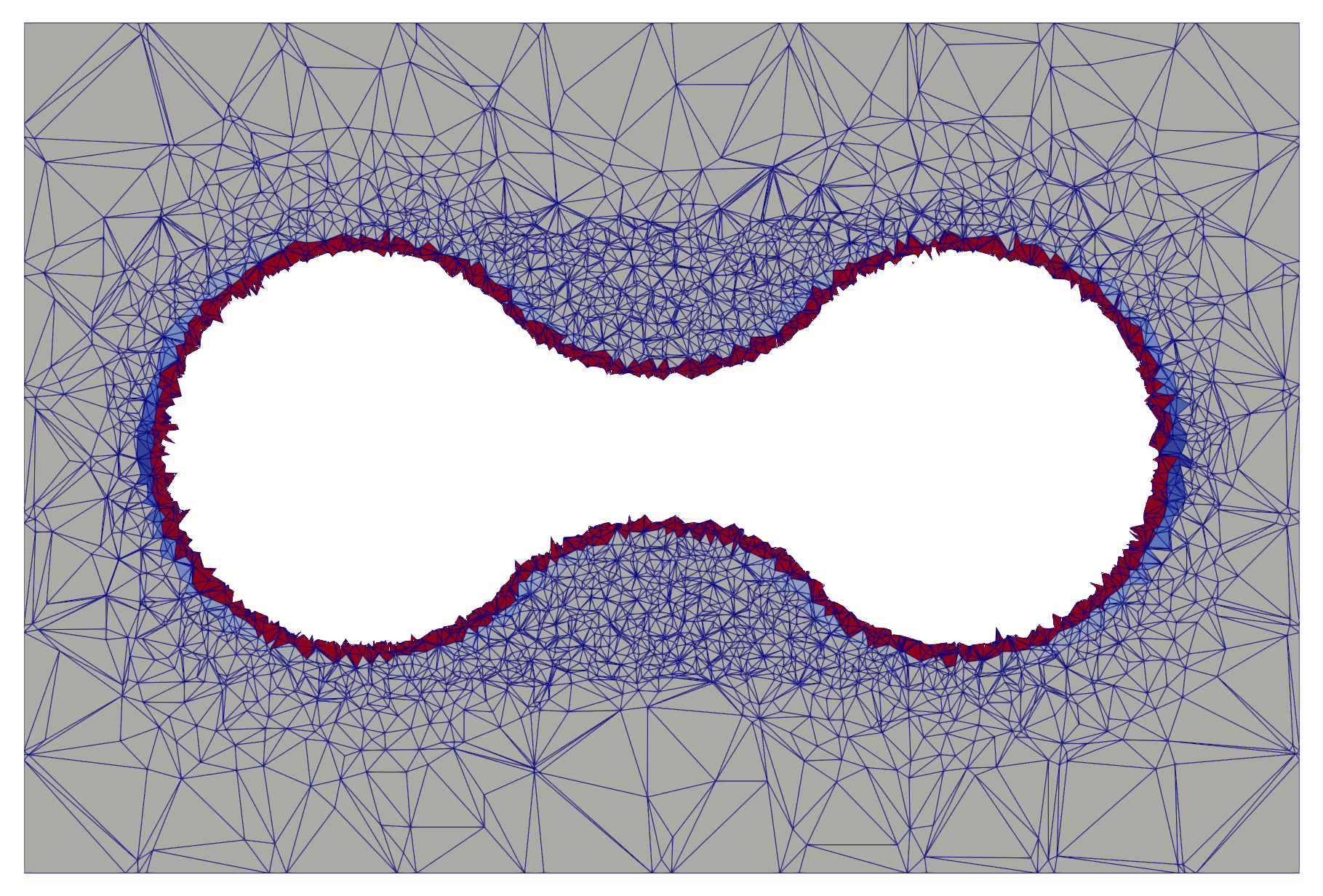}
    \caption{}
    \label{fig:mesh_M_P-a}
    \end{subfigure}
	\begin{subfigure}[c]{0.33\textwidth}
	\centering
	\includegraphics[scale=0.32]{./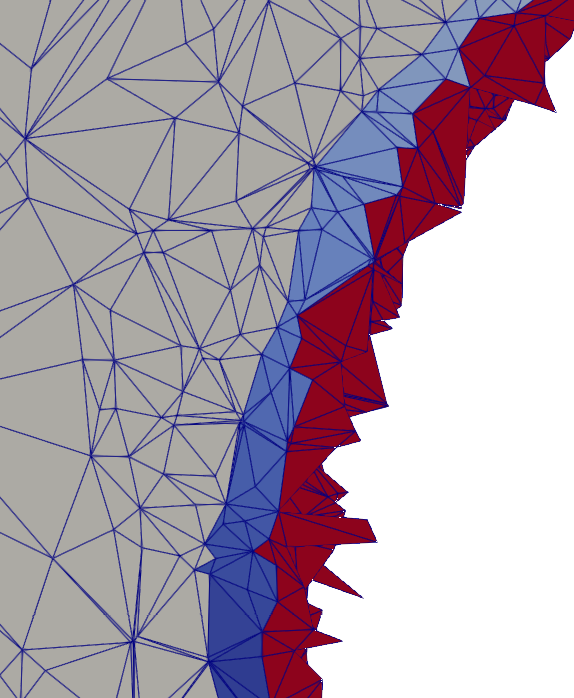}
    \caption{}
    \label{fig:mesh_M_P-b}
    \end{subfigure}
\caption{Left: Mesh after removing the interior cells. The colors represent the values of $p_\mathrm{max}$: $10^5$ inside the particle (red), $|\nu_3|$ in the boundary layer (shades of blue), $1$ otherwise (grey). Right: Magnification of a part of the mesh.}
\label{fig:mesh_M_P}
\end{figure}

\paragraph{Breaking symmetry: the parameter $d_\Gamma$.}
In the first simulations, we observed the behaviour that the algorithm converges, but the solution surface has holes or covers the whole of $\M$, see Figure~\ref{fig:d_Gamma_P_non-uniq}.
This phenomenon seems to be related to the non-uniqueness of the solution.
By mirror symmetry of the particle, there are two distinct configurations which have the same energy and seem to superpose each other.
Note that the curl we obtain in this situation is of order $10^{-2}$, i.e.\ close to zero.
To overcome this issue, we introduce the parameter $d_\Gamma$ which acts as a shift for the initial condition in direction $\mathbf{H}$ and breaks the symmetry.
Choosing $d_\Gamma\defi h$ is enough to eliminate this phenomenon, compare Figure~\ref{fig:P_min_config-c} to Figure~\ref{fig:d_Gamma_P_non-uniq}.

\begin{figure}[H]
\begin{center}
\includegraphics[scale=0.3]{./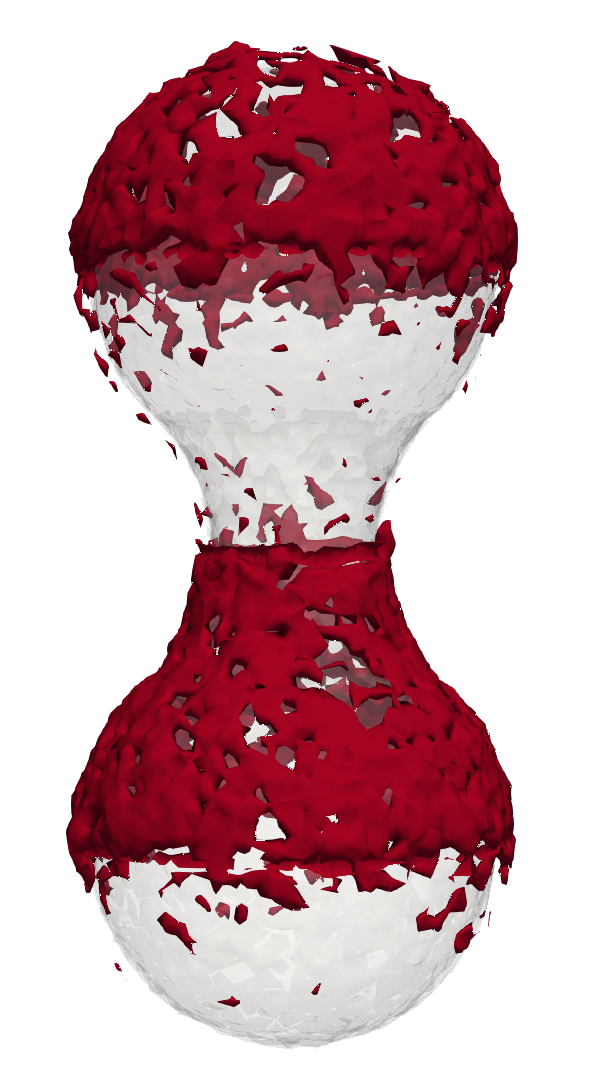}
\hspace*{0.5cm}
\includegraphics[scale=0.3]{./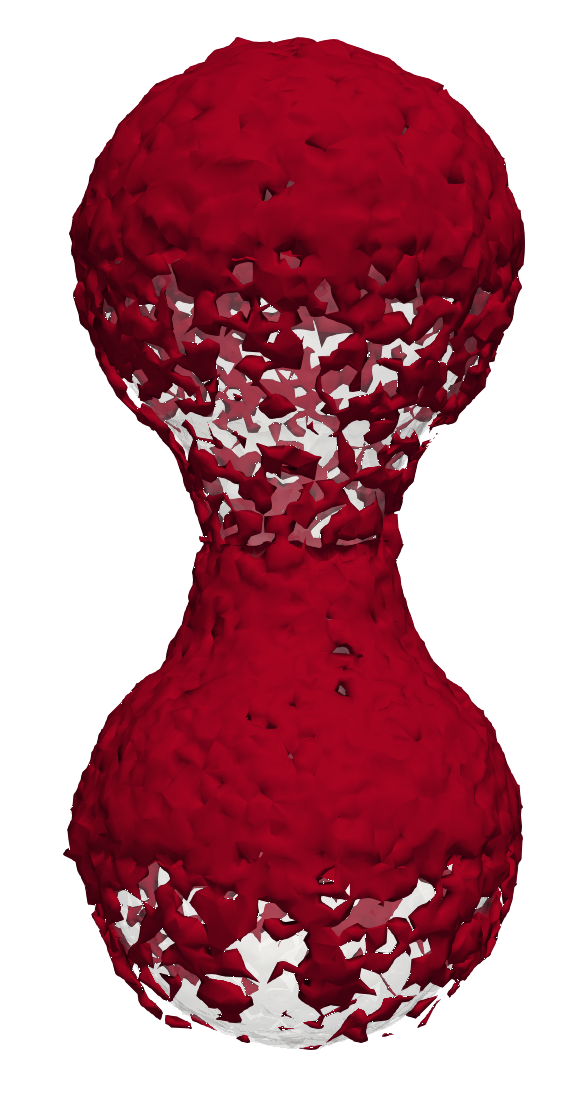}
\end{center}
\caption{Different isosurfaces of a terminal configuration obtained for $\beta=1$ after $4000$ iterations with $d_\Gamma=0$.}
\label{fig:d_Gamma_P_non-uniq}
\end{figure}

\section{Results}
\label{sec:results}

In this section we detail and comment on the numerical simulations we conducted using four different particle geometries: Sphere, Peanut, Donut and Croissant.
In the case of the sphere, the minimizers of $\E_0$ are known to be the Saturn ring around the equator and the dipole, as described in\cite[Ch. 6]{ACS2021}.
This case can thus serve as validation of our algorithm and the numerical implementation.
The peanut-shape has been chosen because the rotational symmetry is broken along an axis.
It is therefore possible to study the defect structures and the energy as function of a single angle $\phi$ between this axis and the external field $\mathbf{H}$.
Since the peanut is also non-convex, one could hope to see a non-trivial $T\restr\Omega$.
However, this is not observed in our simulations.
The donut shape is an example of a shape with a different topology than sphere and peanut and the non-convexity allows to observe a non-trivial $T\restr\Omega$.
Finally, the croissant is another non-convex particle for which certain parameter choices do result in a non-vanishing $T\restr\Omega$ and $S\restr\Omega$ with non-zero curvature.

\subsection{Spherical particle}

The simulation of the spherical particle serves mainly as validation case for our algorithm.
Running simulations for values of $\beta$ between $0.01$ and $1.1$, we observe only a Saturn ring at the equator or a dipole, as expected from the theoretical analysis, see Figure~\ref{fig:S_SR_DP}.

\begin{figure}
	\begin{subfigure}[c]{0.49\textwidth}
	\centering
	\includegraphics[scale=0.3]{./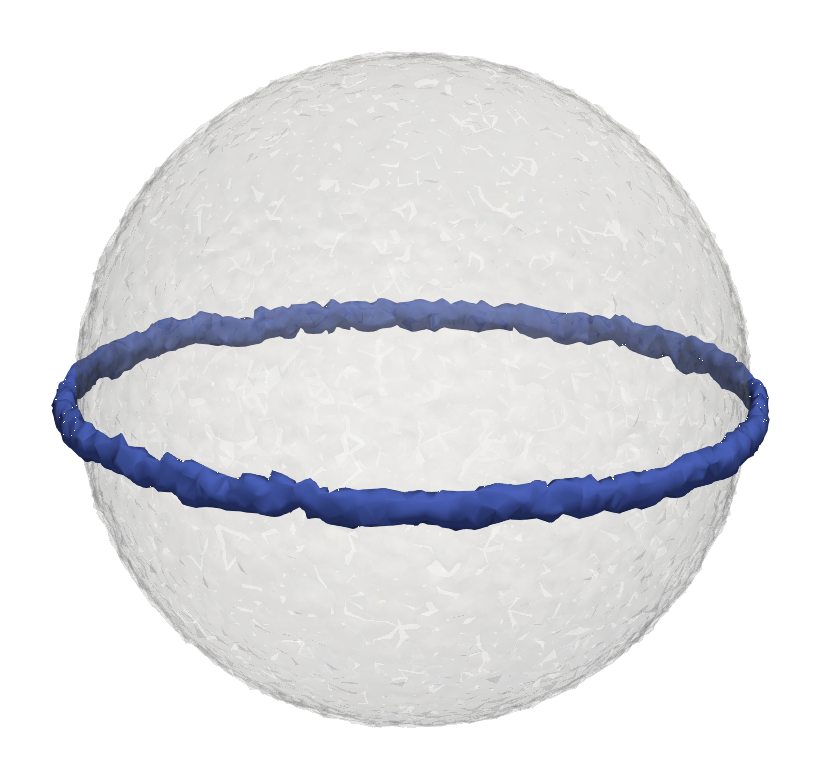}
    \caption{}
    \label{fig:S_SR_DP-a}
    \end{subfigure}
	\begin{subfigure}[c]{0.49\textwidth}
	\centering
	\includegraphics[scale=0.3]{./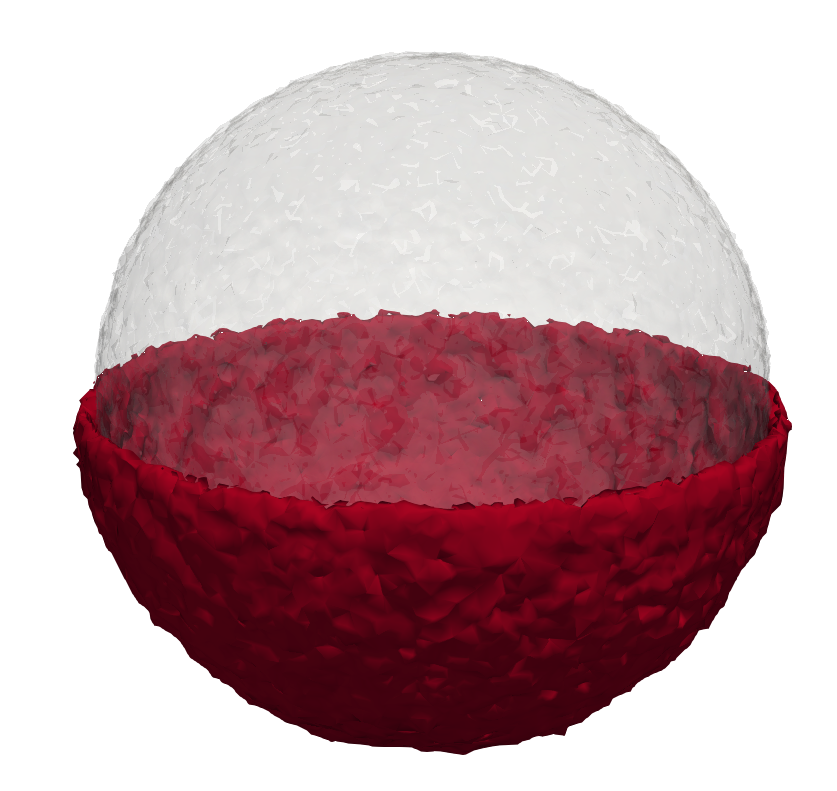}
    \caption{}
    \label{fig:S_SR_DP-b}
    \end{subfigure}
\caption{Observed defect configurations: Saturn ring (a) for small values of $\beta$ and dipole (b) for large $\beta$. The line $S$ is indicated in red and $T$ in blue.}
\label{fig:S_SR_DP}
\end{figure}

Furthermore, the numerically calculated energy as function of the parameter $\beta$ is linearly increasing for a Saturn ring configuration and constant (independent of $\beta$) for a dipole, see Figure~\ref{fig:diagr_S2_E_beta}.
These findings are consistent with the behaviour calculated in \cite[Ch. 6]{ACS2021}, compare with Figure 6 therein.
Note that since we are calculating the globally energy minimizing configuration, it is not possible to reproduce the hysteresis phenomenon.

\begin{figure}
\begin{center}
\includegraphics[scale=1.0]{./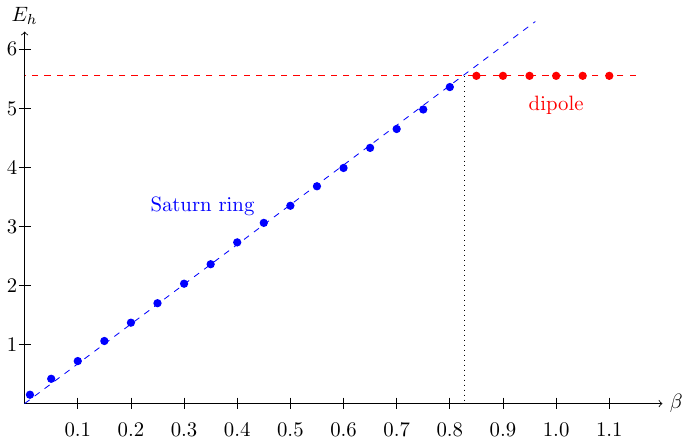}
\end{center}
\caption{Energy of minimizers for different values of $\beta$ around a sphere of radius $1$. 
The mesh consists of around $449\, 000$ cells of size $h=0.03$ around the particle surface.}
\label{fig:diagr_S2_E_beta}
\end{figure}

\subsection{Peanut-shaped particle}

In this subsection we present our findings about the first non-spherical particle that we consider in this article.
It is constructed by combining three circle arcs (two bending outwards which are mirror images of each other, and one inwards) and then rotating the resulting curve to obtain a surface of revolution.
The resulting object is therefore rotationally symmetric around an axis and exhibits additionally a mirror symmetry.
We are interested how the angle $\phi$ between the symmetry axis of the peanut and the external field $\mathbf{H}$ influences the defect structures and energy.
Note that since the peanut is not symmetric with respect to all rotations, we need to calculate the constant $C_\M$ depending on the angle $\phi$ to find the energy minimizing configurations.
The constant $C_\M$ is given by $C_\M = \frac12\int_\M (1-|\nu\cdot\mathbf{H}|) \dx\mathcal{H}^2$ (see \cite[Section 3]{ACS2024}) and can easily be approximated using the finite element discreization of the surface layer $\M_\delta$ by computing $\frac12\int_{\M_\delta} (1-|(\nu\circ\Pi_\M)\cdot\mathbf{H}|) \dx x$.

For $\phi=\frac{\pi}{2}$, the results resemble the spherical case:
There are only two observed configurations, Saturn ring and dipole, see Figure~\ref{fig:P_min_config} (d) and (e).
The plot of the energy as function of $\beta$ in Figure~\ref{fig:diagr_P_E_beta_phi} has the same qualitative behaviour as Figure~\ref{fig:diagr_S2_E_beta}.
If $\phi=0$ the situation is quite different:
since $\Gamma$ consists of three disjoint circle arcs, for $\beta$ small, we see three Saturn rings surrounding the peanut, see Figure~\ref{fig:P_min_config} (a).
Note that the upper and lower Saturn rings are of type $-\frac{1}{2}$, while the inner one has to be a $+\frac12-$defect for orientability reasons.
In our program the difference of a $\pm\frac12-$defect is represented by the fact that the vector field $\curl(u)$ (and $q$) have different orientations.
Before attaining a dipole configuration for large $\beta$ which consists of two disjoint components of $T\restr\M$ (as in Figure~\ref{fig:P_min_config} (c)), there exists a regime for $\beta$ in which two of the components of $\Gamma$ are joined together by $T\restr\M$ and the third component still appears as a Saturn ring, see Figure~\ref{fig:P_min_config} (b).

\begin{figure}
	\begin{subfigure}[c]{0.33\textwidth}
	\centering
	\includegraphics[scale=0.27]{./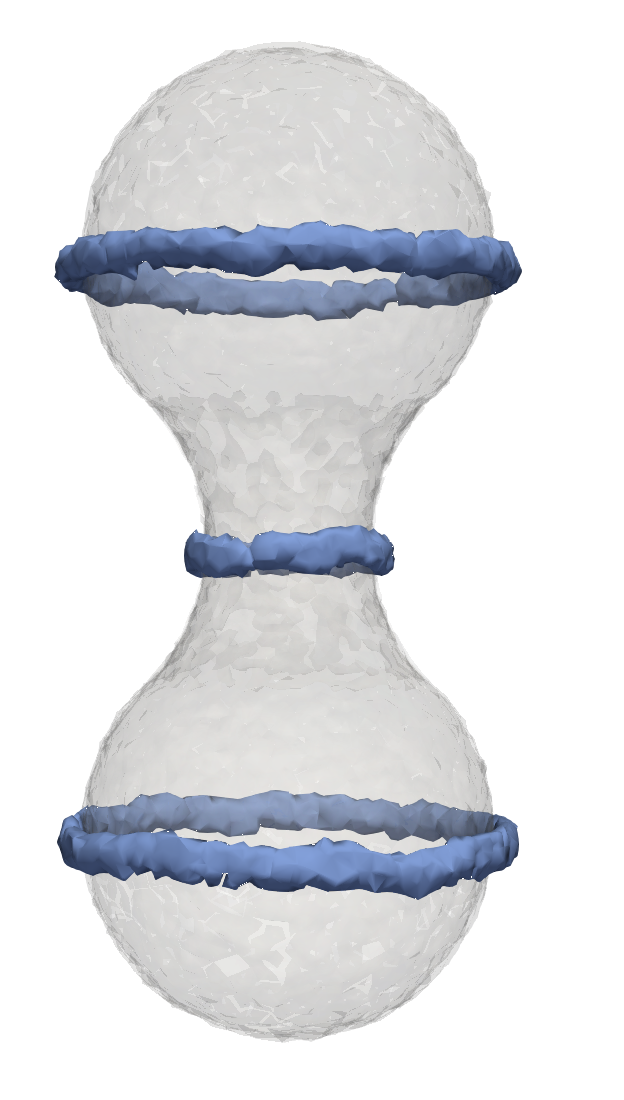}
    \caption{} 
    \label{fig:P_min_config-a}
    \end{subfigure}
    \begin{subfigure}[c]{0.33\textwidth}
	\centering
	\includegraphics[scale=0.27]{./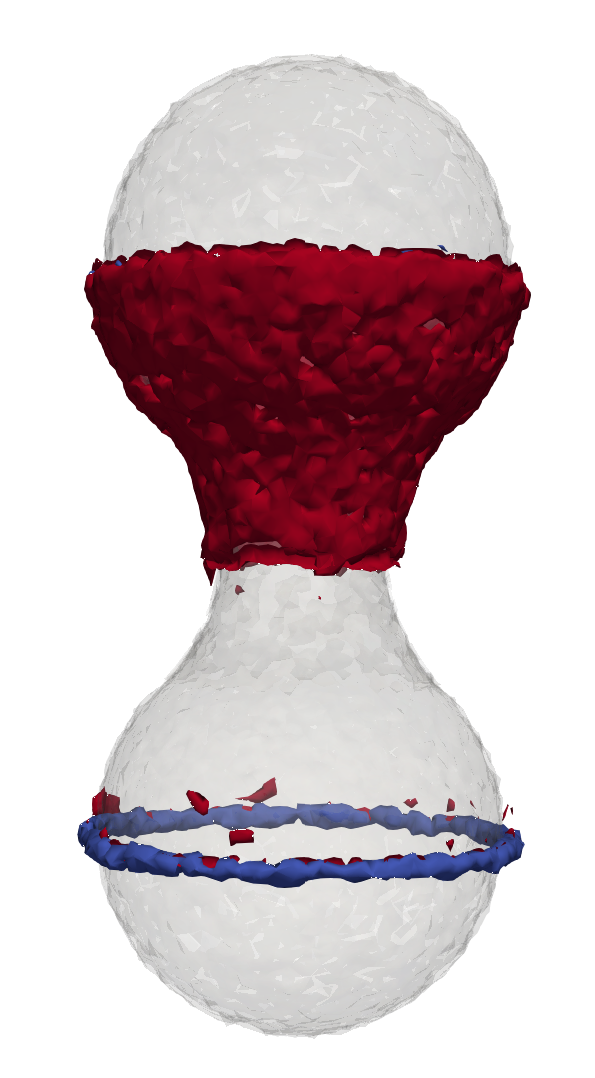}
    \caption{} 
    \label{fig:P_min_config-b}
    \end{subfigure}
    \begin{subfigure}[c]{0.33\textwidth}
	\centering
	\includegraphics[scale=0.27]{./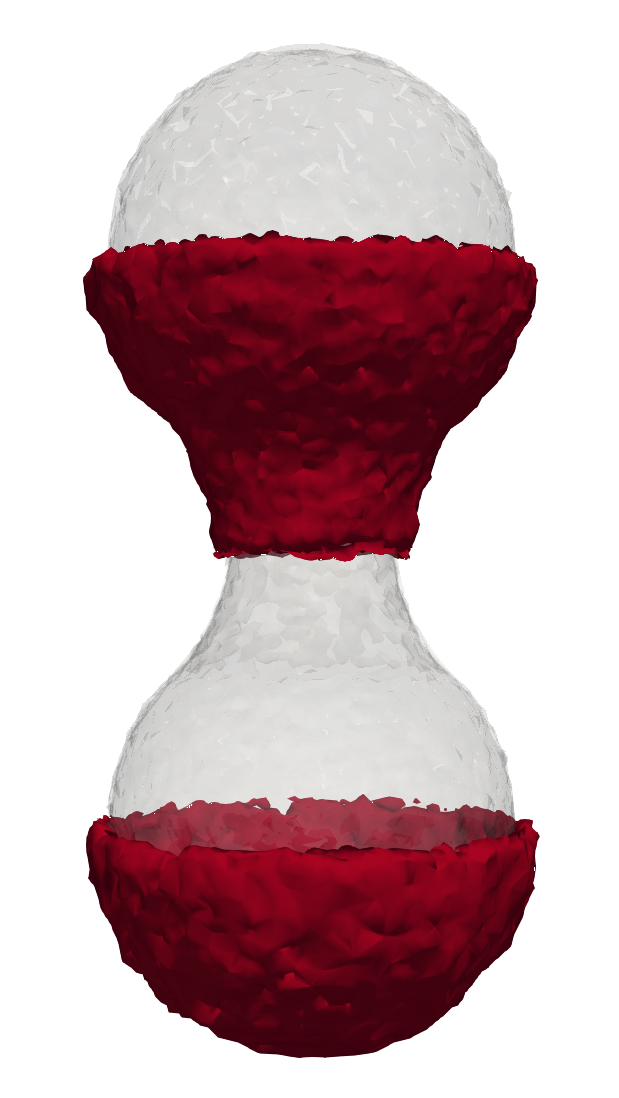}
    \caption{} 
    \label{fig:P_min_config-c}
    \end{subfigure}

	\begin{subfigure}[c]{0.49\textwidth}
	\centering
	\includegraphics[scale=0.18]{./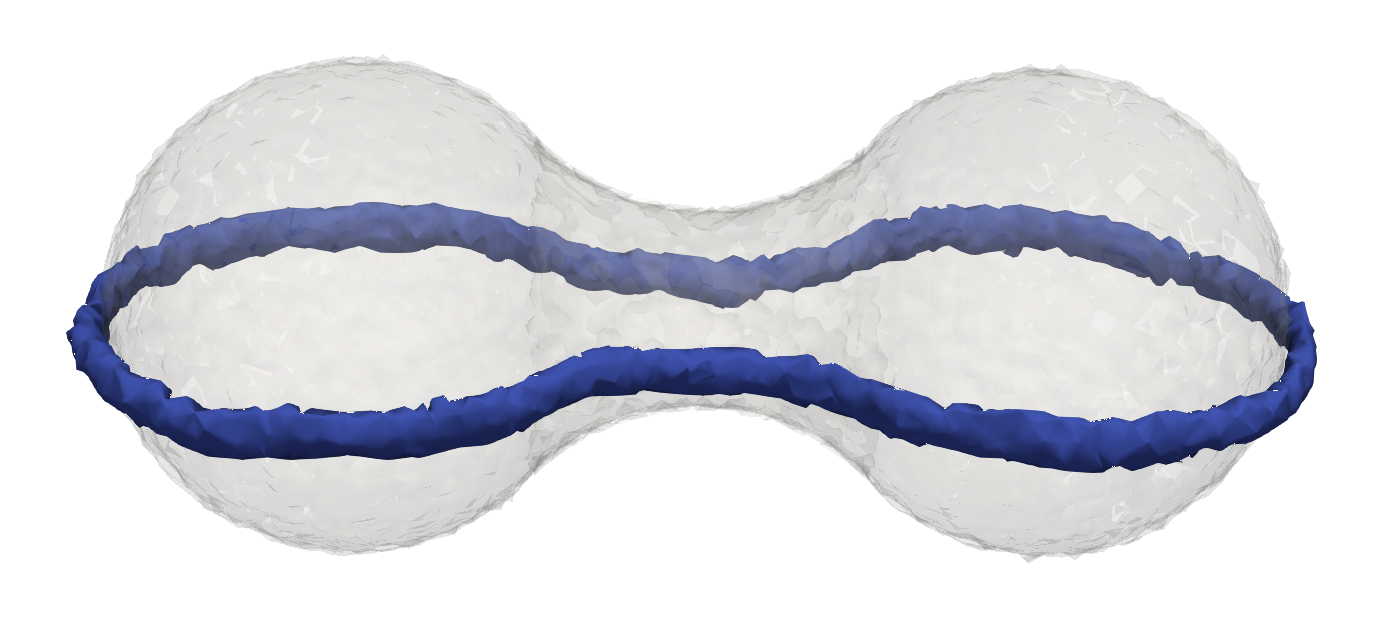}
    \caption{} 
    \label{fig:P_min_config-d}
    \end{subfigure}
    \begin{subfigure}[c]{0.49\textwidth}
	\centering
	\includegraphics[scale=0.18]{./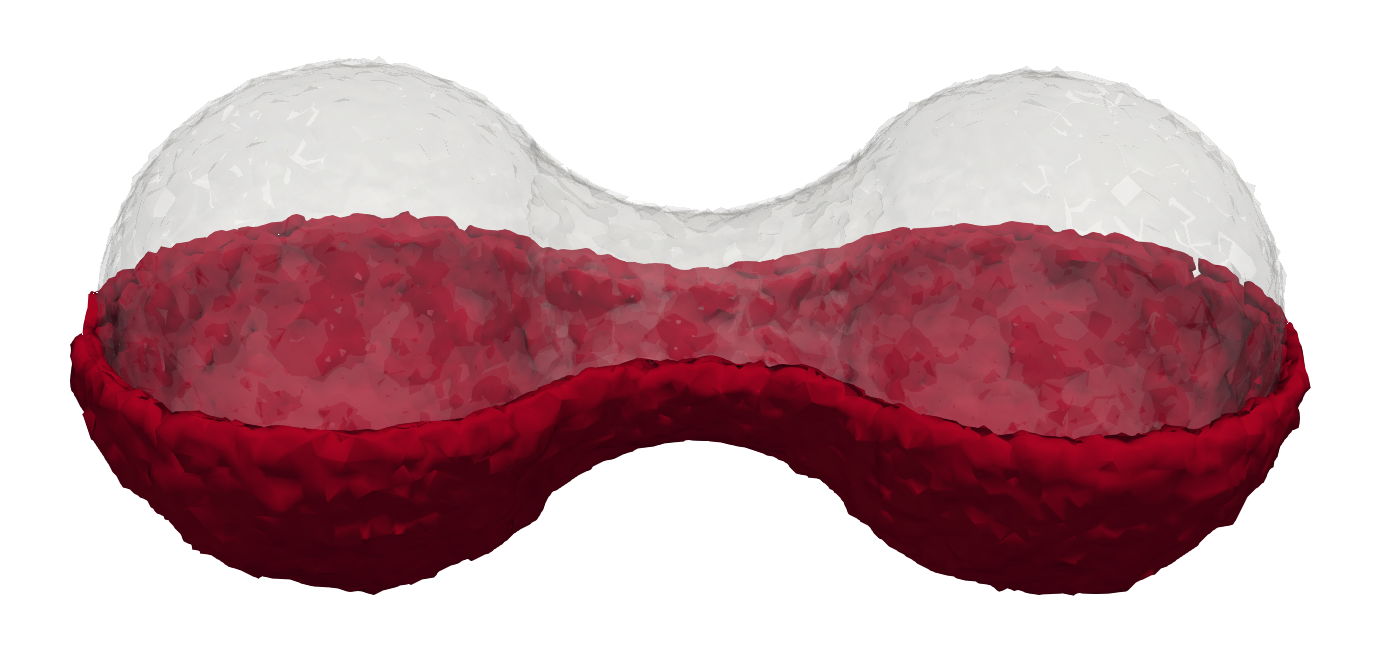}
    \caption{} 
    \label{fig:P_min_config-e}
    \end{subfigure}
\caption{The observed defect configurations for $\phi=0$ (a)-(c) are three Saturn rings (a), two components of $\Gamma$ joined together leaving one Saturn ring (b) and dipole (c). 
For $\phi=\frac{\pi}{2}$ (d)-(e), we find a Saturn ring (d) and dipole (e). }
\label{fig:P_min_config}
\end{figure}

Comparing the found defect structures, we find that depending on the number of connected components of $\Gamma$, we observe $1$ or $3$ Saturn rings for small $\beta$.

\begin{figure}
\begin{center}
\includegraphics[scale=0.9]{./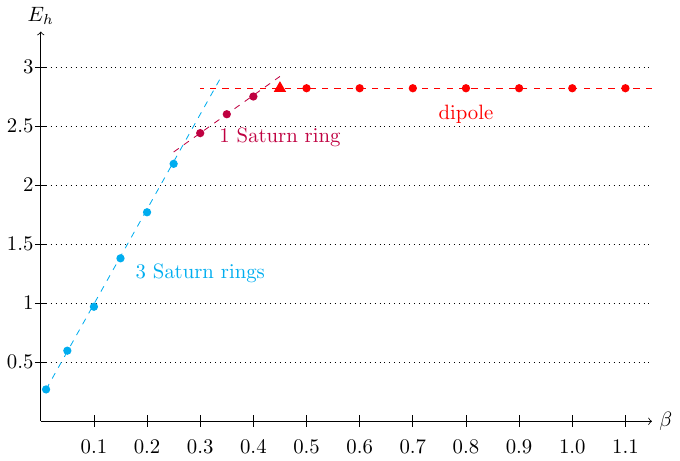}
\end{center}
\caption{Energy of minimizers for different values of $\beta$ around the peanut shape for $\phi=0$. Dashed lines indicate the regression line for each of the three observed configurations. See Figures~\ref{fig:P_min_config} (a)-(c) for images of the three configurations.}
\label{fig:diagr_P_E_beta_phi0}
\end{figure}

As the total length of $\Gamma$ for different angles $\phi$ is not equal (in fact the length decreases when $\phi$ increases), it is natural to expect the slope of the energy as function of $\beta$ to be monotonically decreasing as function of $\phi$, being minimal for $\phi=\frac\pi 2$.
This is indeed the observed behaviour in Figure~\ref{fig:diagr_P_E_beta_phi}.
Also the dipoles have different energy even though they always cover half of the particle.
This can be explained through the weight $|\nu\cdot\mathbf{H}|$ in the integration over the surface, since for $\phi=0$ a larger part of $\M$ is oriented perpendicular to $\mathbf{H}$, while for $\phi=\frac\pi 2$ the normal vector $\nu$ is close to parallel to $\mathbf{H}$ on a larger portion of the particle surface.
We observe again a monotone behaviour of the energy with respect to $\phi$, the minimal energy being given for $\phi=\frac{\pi}{2}$.
We conclude that $\phi=\frac{\pi}{2}$ is the energetically preferred orientation, see Figure~\ref{fig:P_min_config} (d) and (e).
Notice that in the experiments conducted in \cite[Fig. 1]{Sahu2019} the configurations with $\phi=\frac{\pi}{2}$ (or $\phi$ close to $\frac{\pi}{2}$) are by far the most frequently observed orientations.

\begin{figure}
\begin{center}
\includegraphics[scale=0.66]{./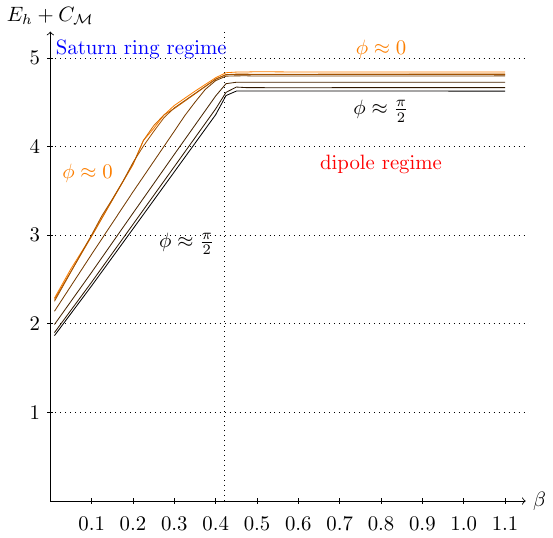}
\hspace*{0.25cm}
\includegraphics[scale=0.88]{./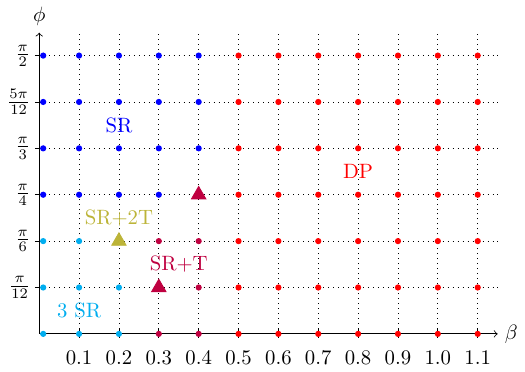}
\end{center}
\caption{Left: Energy of minimizers as function of $\beta$ around the peanut shape for different values of $\phi$ between $0$ (solid line) and $\frac{\pi}{2}$ (dotted line). The mesh consists of around $485\, 000$ cells of size $h=0.03$ around the particle surface.
Right: Defect configuration corresponding to the minimal energy for given $\phi$ and $\beta$. Dots indicate simulations with $2\ 000$, triangles with $4\ 000$ iterations. 
We observe dipoles (DP), Saturn rings with one (SR) or three components (3 SR) and Saturn rings with non-trivial surface $T$ of one (SR+T) or two components (SR+2T).
See Figures~\ref{fig:P_min_config}, \ref{fig:image_P_special_phipi4} and \ref{fig:image_P_special_phipi6} for images of these configurations.}
\label{fig:diagr_P_E_beta_phi}
\end{figure}

\begin{figure}
\begin{center}
\begin{subfigure}[c]{0.32\textwidth}
\centering
\includegraphics[scale=0.2]{./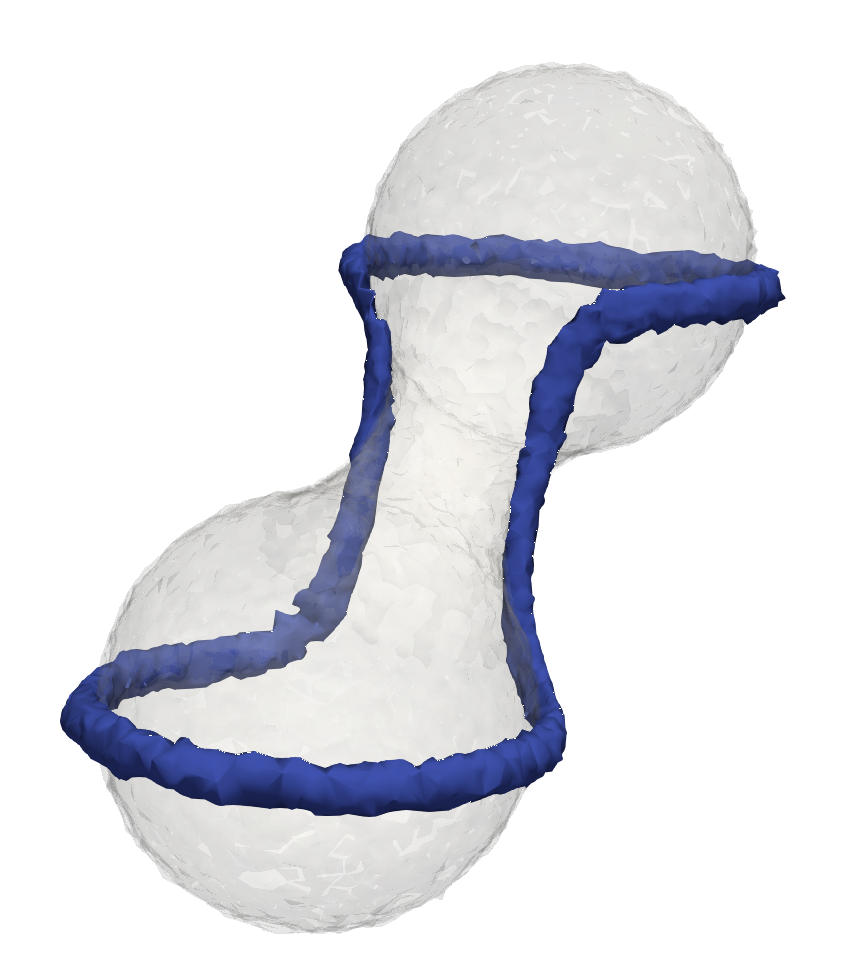}
\caption{} 
\label{fig:image_P_special_phipi4-a}
\end{subfigure}
\begin{subfigure}[c]{0.32\textwidth}
\centering
\includegraphics[scale=0.2]{./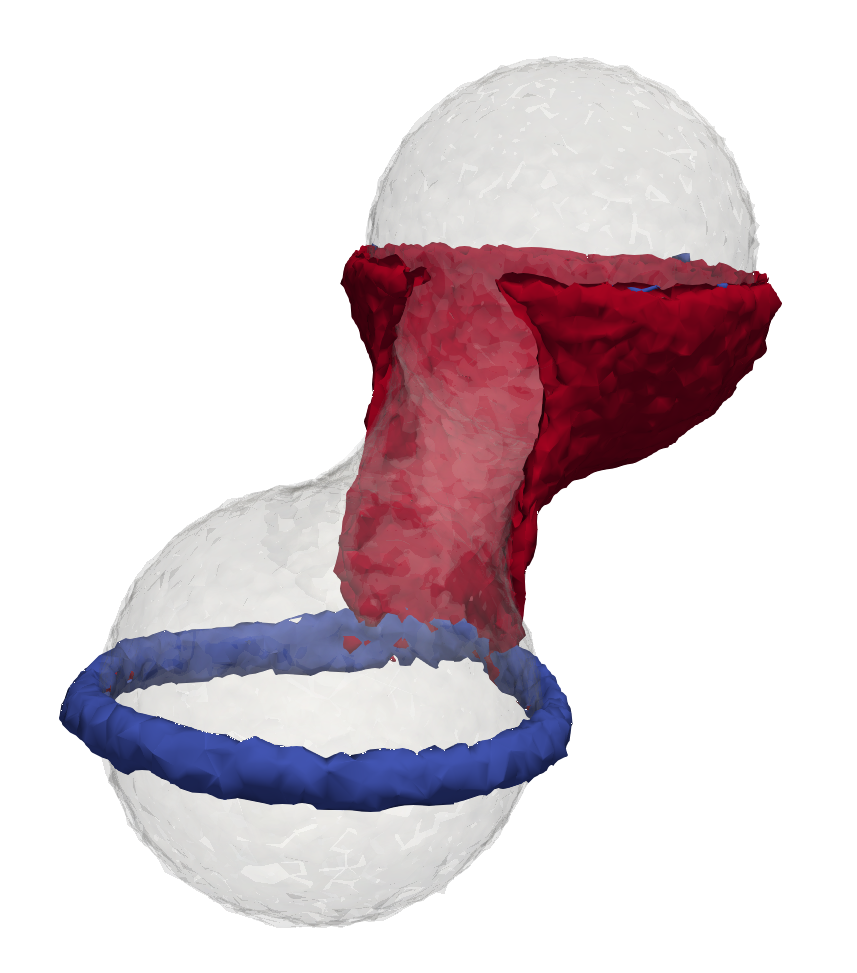}
\caption{} 
\label{fig:image_P_special_phipi4-b}
\end{subfigure}
\begin{subfigure}[c]{0.32\textwidth}
\centering
\includegraphics[scale=0.2]{./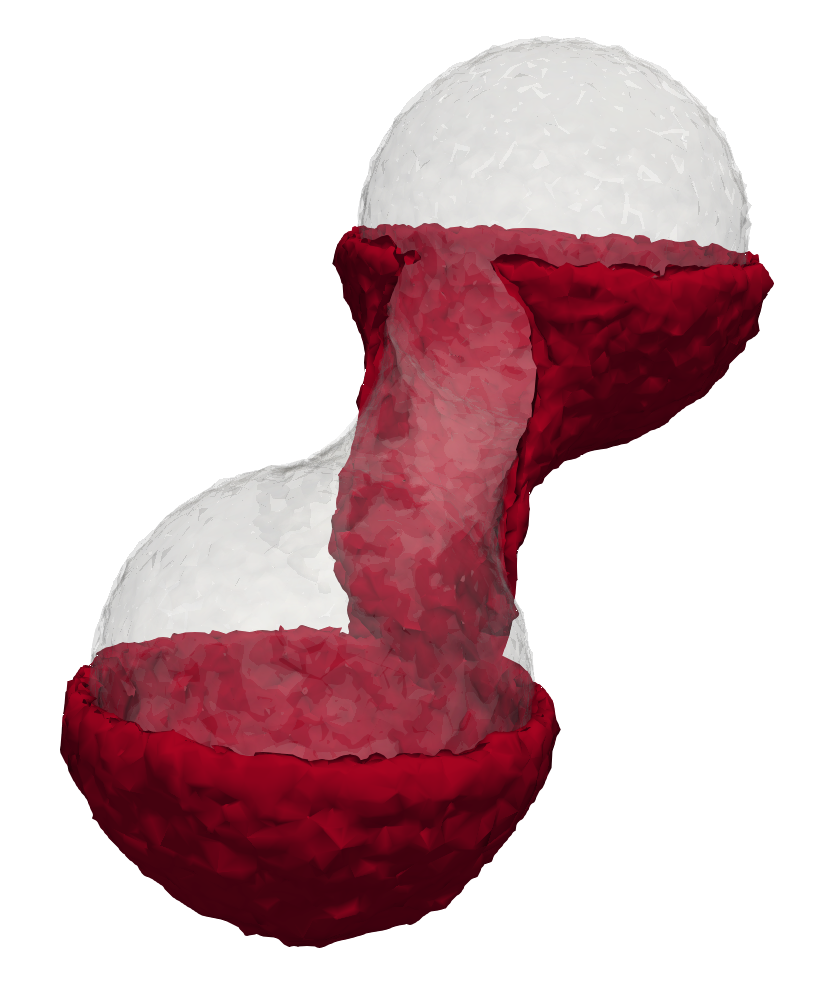}
\caption{} 
\label{fig:image_P_special_phipi4-c}
\end{subfigure}
\end{center}
\caption{Configurations obtained for $\phi = \frac\pi 4$ and $\beta=0.3, 0.4, 0.5\,$.}
\label{fig:image_P_special_phipi4}
\end{figure}

\begin{figure}
\begin{center}
\begin{subfigure}[c]{0.24\textwidth}
\centering
\includegraphics[scale=0.15]{./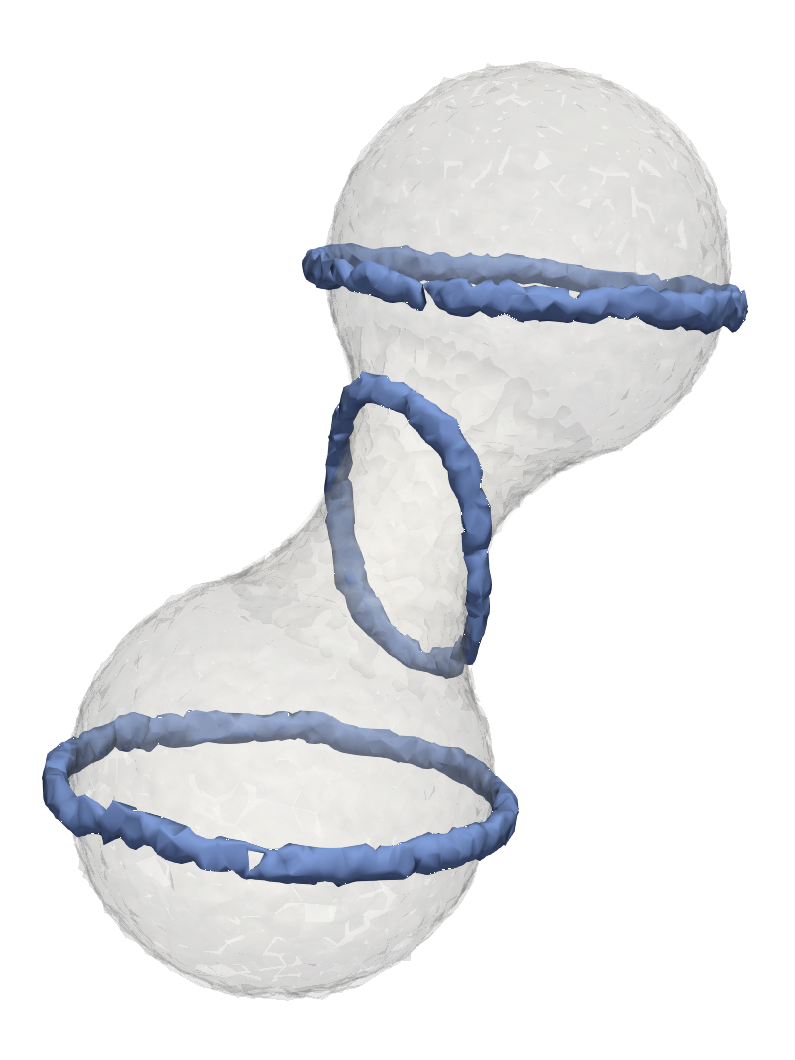}
\caption{} 
\label{fig:image_P_special_phipi6-a}
\end{subfigure}
\begin{subfigure}[c]{0.24\textwidth}
\centering
\includegraphics[scale=0.15]{./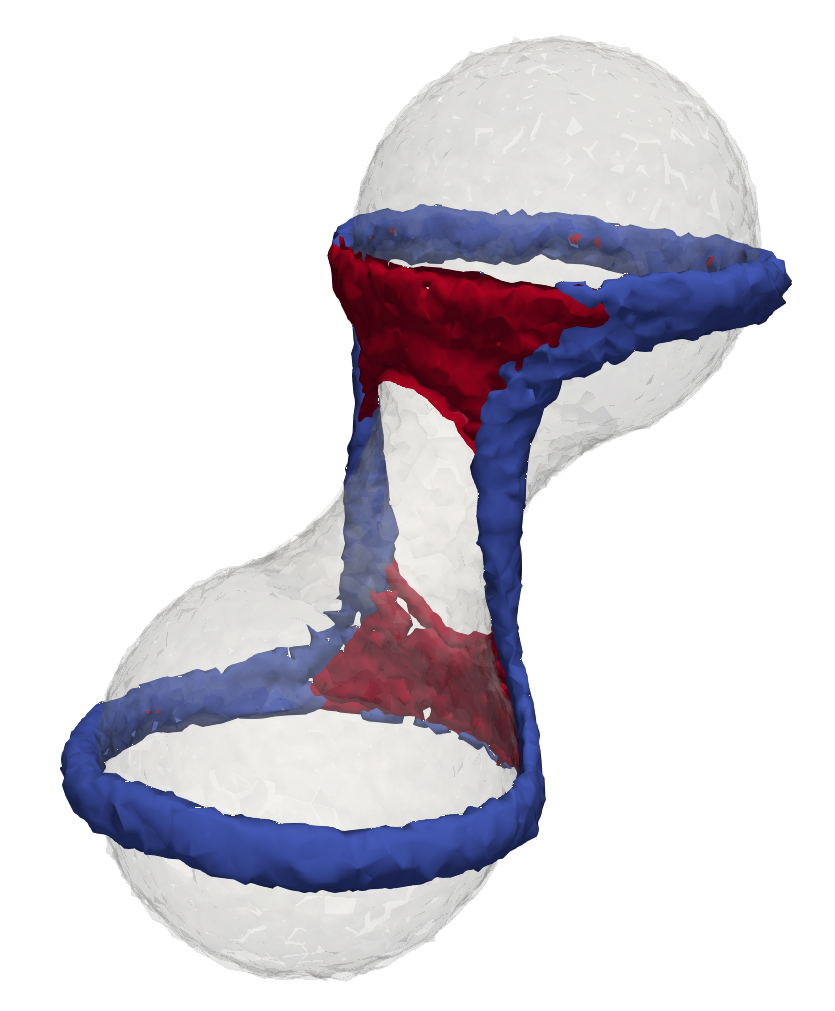}
\caption{} 
\label{fig:image_P_special_phipi6-b}
\end{subfigure}
\begin{subfigure}[c]{0.24\textwidth}
\centering
\includegraphics[scale=0.15]{./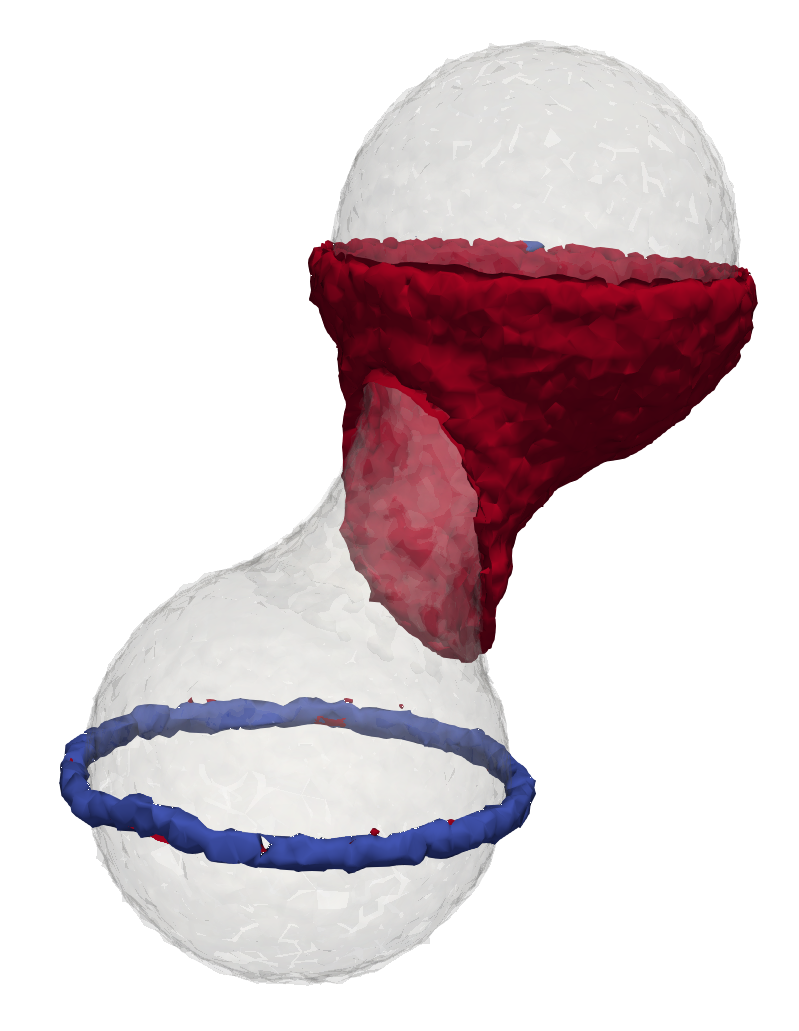}
\caption{} 
\label{fig:image_P_special_phipi6-c}
\end{subfigure}
\begin{subfigure}[c]{0.24\textwidth}
\centering
\includegraphics[scale=0.15]{./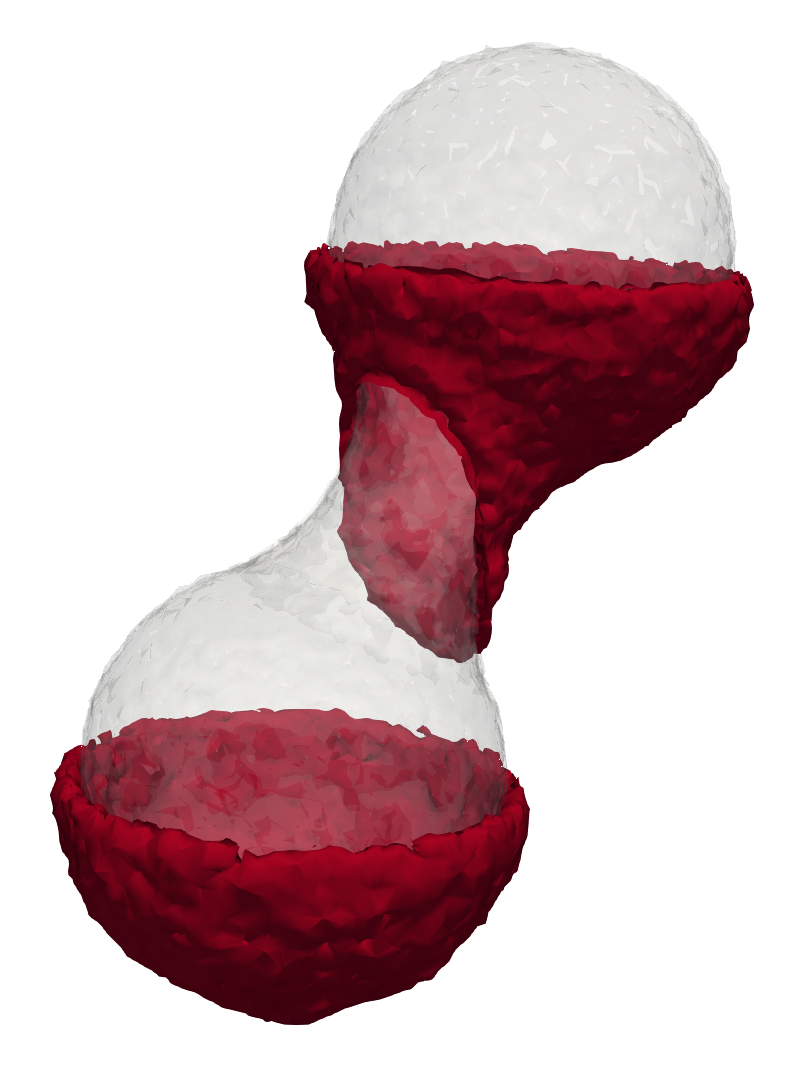}
\caption{} 
\label{fig:image_P_special_phipi6-d}
\end{subfigure}
\end{center}
\caption{Configurations obtained for $\phi = \frac\pi 6$ and $\beta=0.1, 0.2, 0.4, 0.5\,$.}
\label{fig:image_P_special_phipi6}
\end{figure}

As the peanut is non-convex, one could hope for the surface $T$ to form outside the boundary layer $\M_h$. 
We did not observe this in our simulations which can heuristically be explained by the fact that the line $S$ detaching from the surface at distance $d$ would only be shortened by a term of order $d^2$ while the additional $\MM(T\restr\Omega)$ would be of order $d$.
We therefore expect the line $S$ to always stay at the surface of $\M$ unless the dipole becomes energetically favourable.

\subsection{Donut-shaped particle}

To illustrate the case of a particle with non-trivial topology, we study the a donut-shaped colloid with inner radius $r>0$ and outer radius $R>r$.
We choose $R=0.7$ and $r=0.4$ (i.e.\ a ratio of $1.75$) to illustrate our findings, other ratios exhibit similar minimizing configurations, see for example Figure~\ref{fig:image_D105_phi15} in which a ratio of $R/r=5$ has been used.

The landscape of minimizers is more complex than in the case of the peanut-shaped particle, we give an overview of observed minimizers in Figure~\ref{fig:diagr_D_E_beta_phi_2}.
It is worth pointing out that from the plot of the energy as function of $\beta$ we can conclude that the angle $\phi\approx\frac{\pi}{2}$ is minimizing for any $\beta$ and thus if the particle is free to rotate inside the liquid crystal, one should expect to observe configurations as in Figure~\ref{fig:image_D419_phi15} and \ref{fig:image_D105_phi15} in the particular limit in which the model \eqref{intro:eq:E0} is valid, see \cite{ACS2021}.
This asymptotic model does not cover the whole range of physical parameters, as we are not able to justify the configuration corresponding to $\phi=0$ that has been observed in \cite{Senyuk2012} (with the magnetic field replaced by an electric field). \\
If the particle is not free to rotate, one typically observes a transition for increasing $\beta$ from two Saturn rings (one small interior and one big exterior) by first replacing the smaller ring defect by a piece of $T$, either on the surface $\M$ and/or inside $\Omega$.
Increasing $\beta$ even further, both rings are connected via $T$ on the particle surface and no $T\restr\Omega$ is observed, see Figure~\ref{fig:image_D419_phi0}, \ref{fig:image_D419_phipi4} and \ref{fig:image_D419_phi5pi12}.

\begin{figure}
\begin{center}
\begin{subfigure}[c]{0.32\textwidth}
\centering
\includegraphics[scale=0.15]{./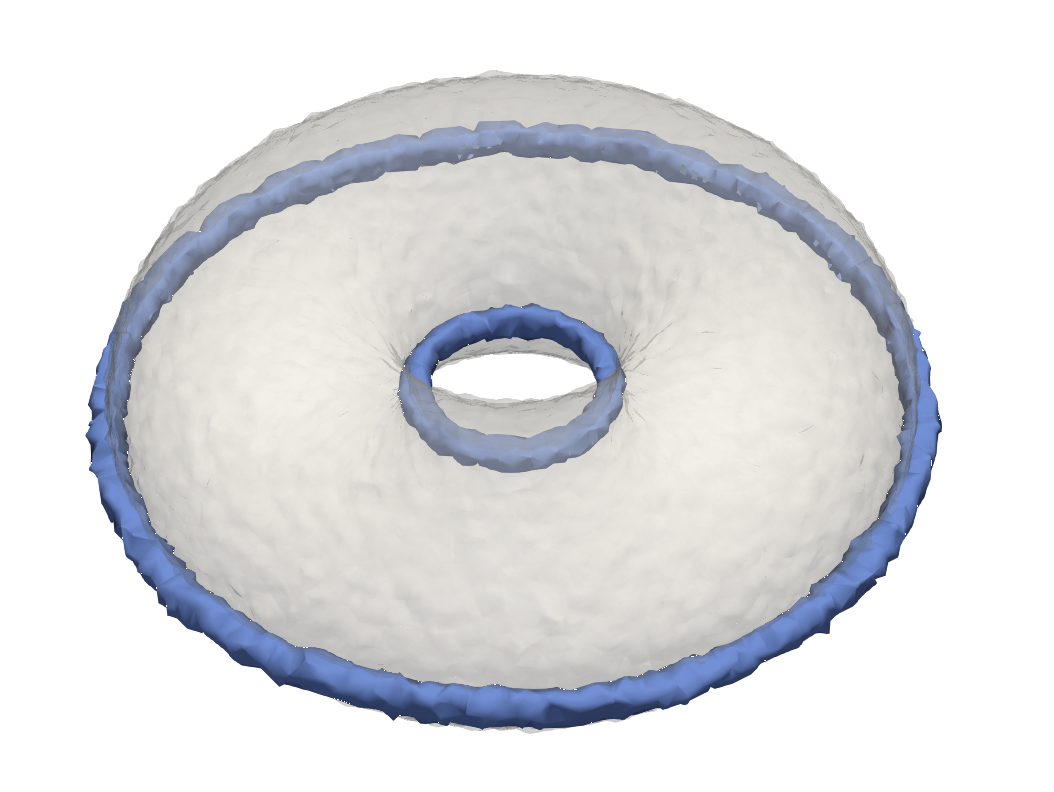}
\caption{} 
\label{fig:image_D419_phi0-a}
\end{subfigure}
\begin{subfigure}[c]{0.32\textwidth}
\centering
\includegraphics[scale=0.15]{./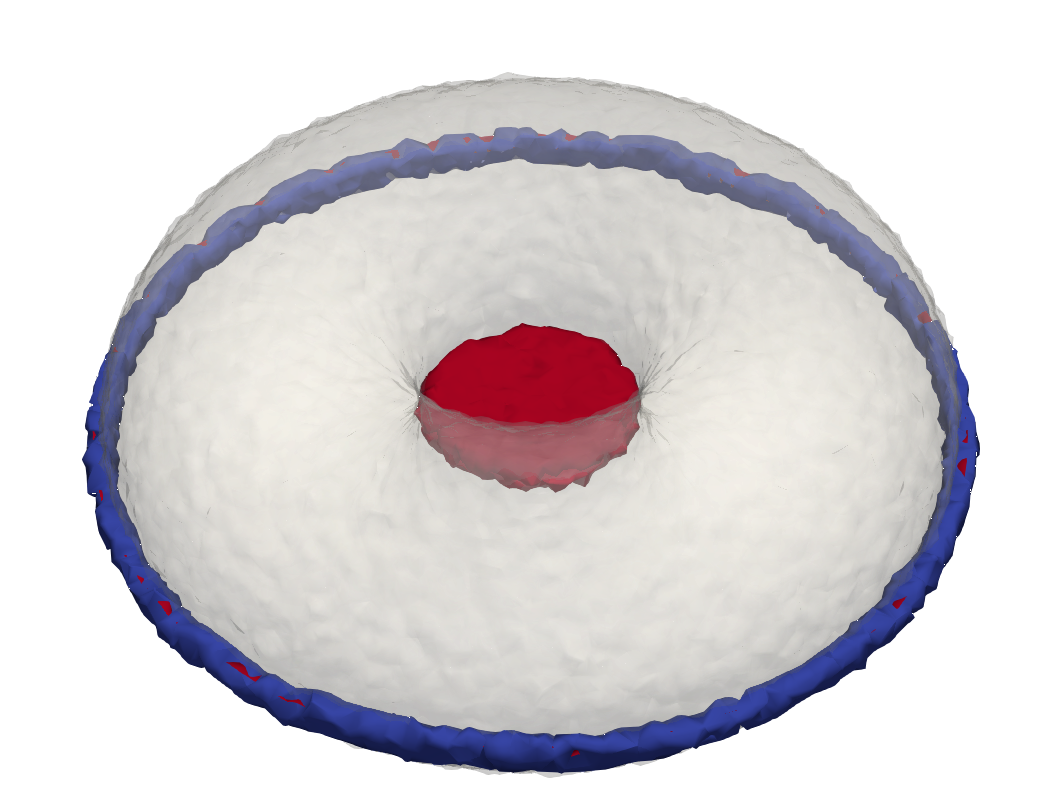}
\caption{} 
\label{fig:image_D419_phi0-b}
\end{subfigure}
\begin{subfigure}[c]{0.32\textwidth}
\centering
\includegraphics[scale=0.15]{./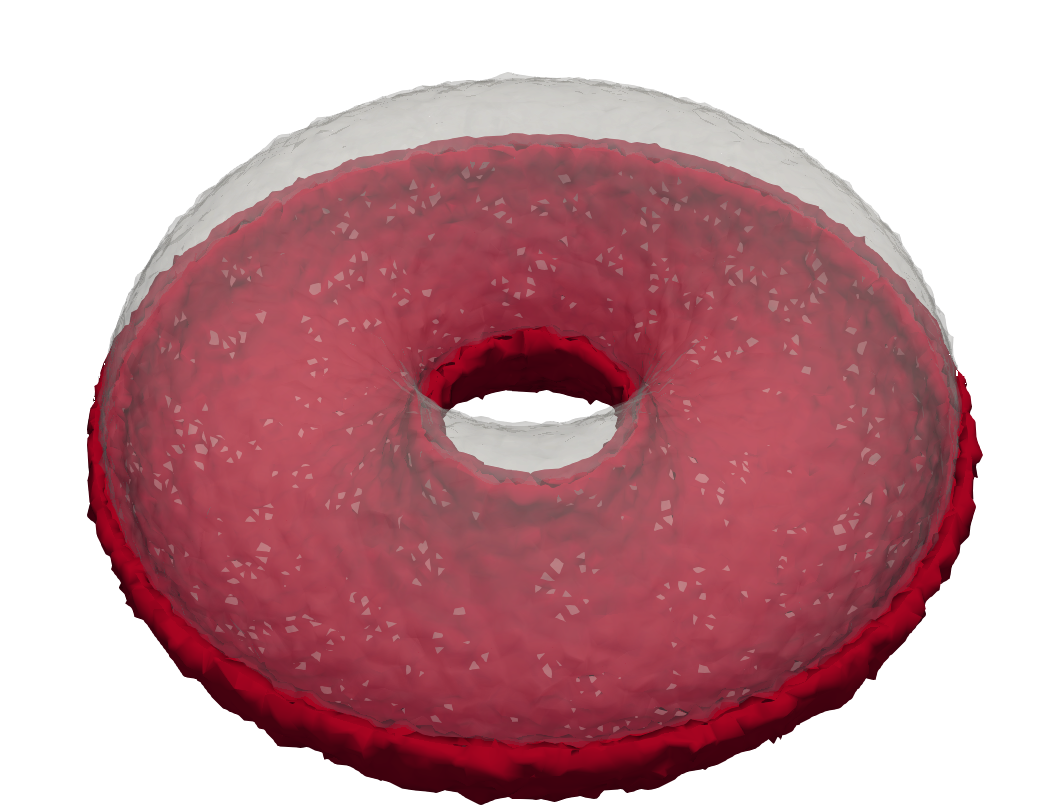}
\caption{} 
\label{fig:image_D419_phi0-c}
\end{subfigure}
\end{center}
\caption{Configurations obtained for $\phi = 0$ and $\beta=0.1, 0.2, 0.6\,$.}
\label{fig:image_D419_phi0}
\end{figure}

\begin{figure}
\begin{center}
\begin{subfigure}[c]{0.49\textwidth}
\centering
\includegraphics[scale=0.25]{./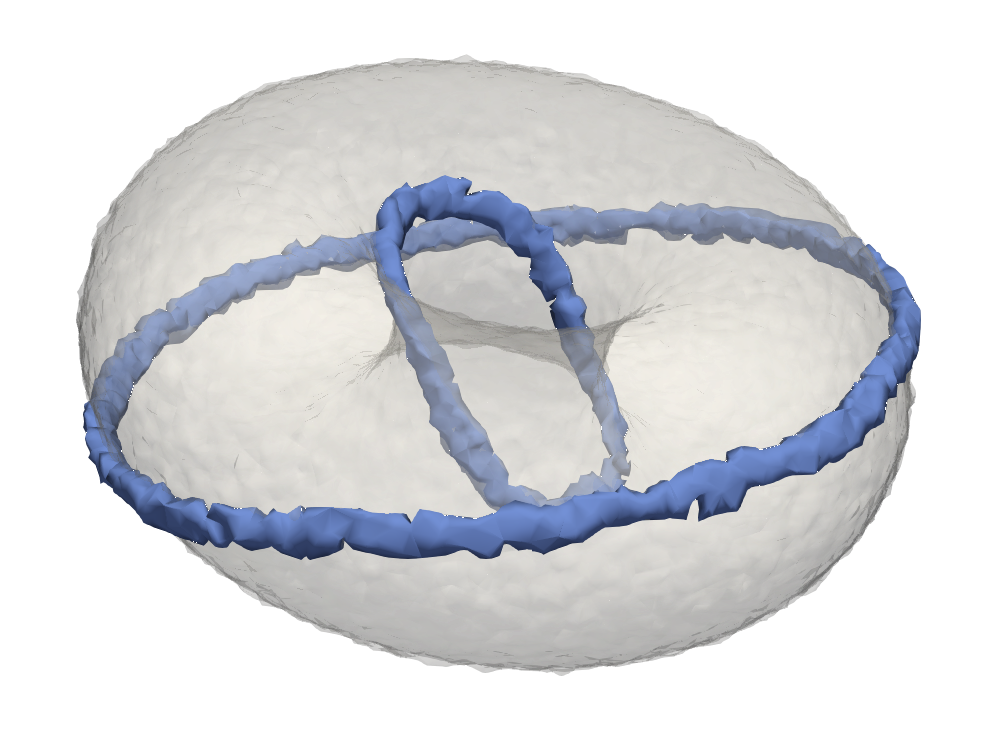}
\caption{} 
\label{fig:image_D419_phipi4-a}
\end{subfigure}
\begin{subfigure}[c]{0.49\textwidth}
\centering
\includegraphics[scale=0.25]{./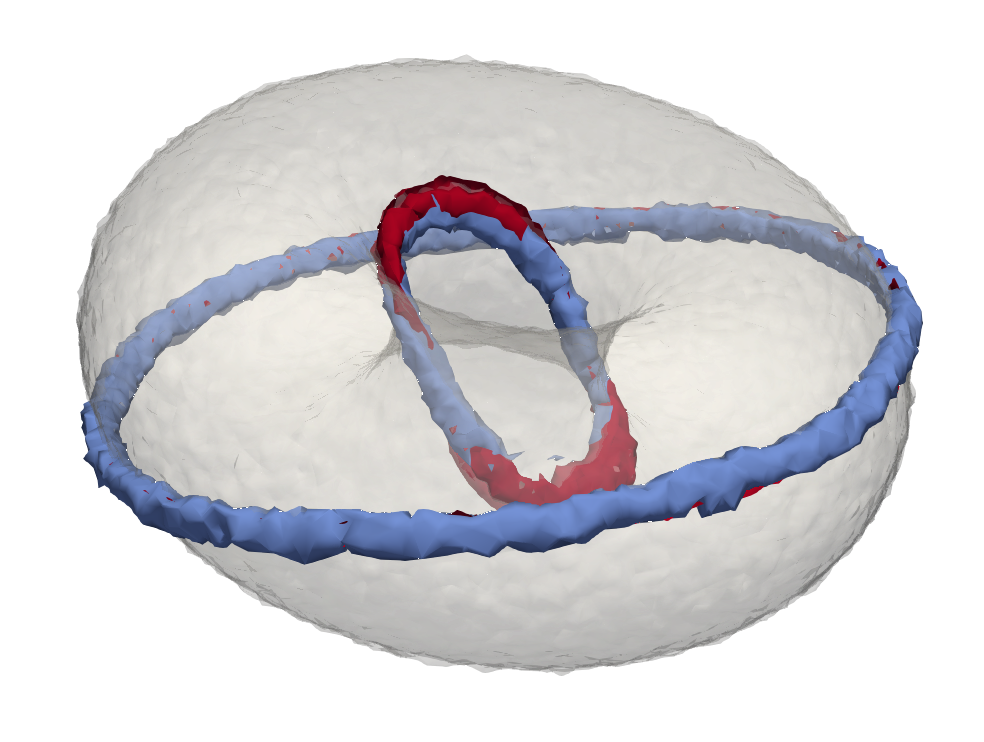}
\caption{} 
\label{fig:image_D419_phipi4-b}
\end{subfigure}
\begin{subfigure}[c]{0.49\textwidth}
\centering
\includegraphics[scale=0.25]{./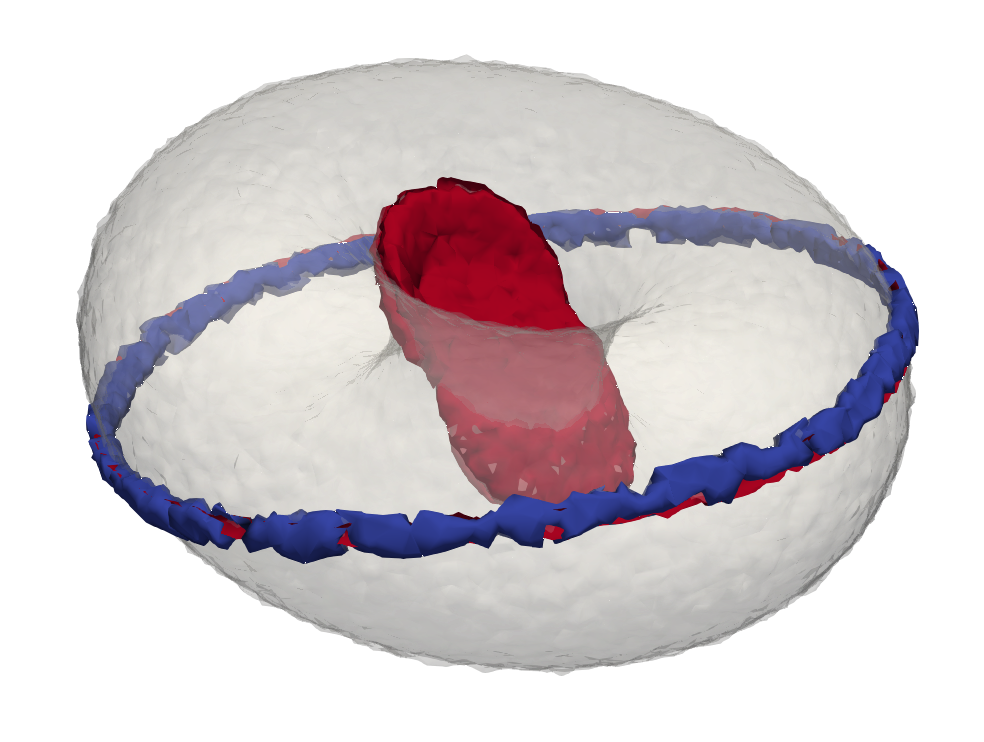}
\caption{} 
\label{fig:image_D419_phipi4-c}
\end{subfigure}
\begin{subfigure}[c]{0.49\textwidth}
\centering
\includegraphics[scale=0.25]{./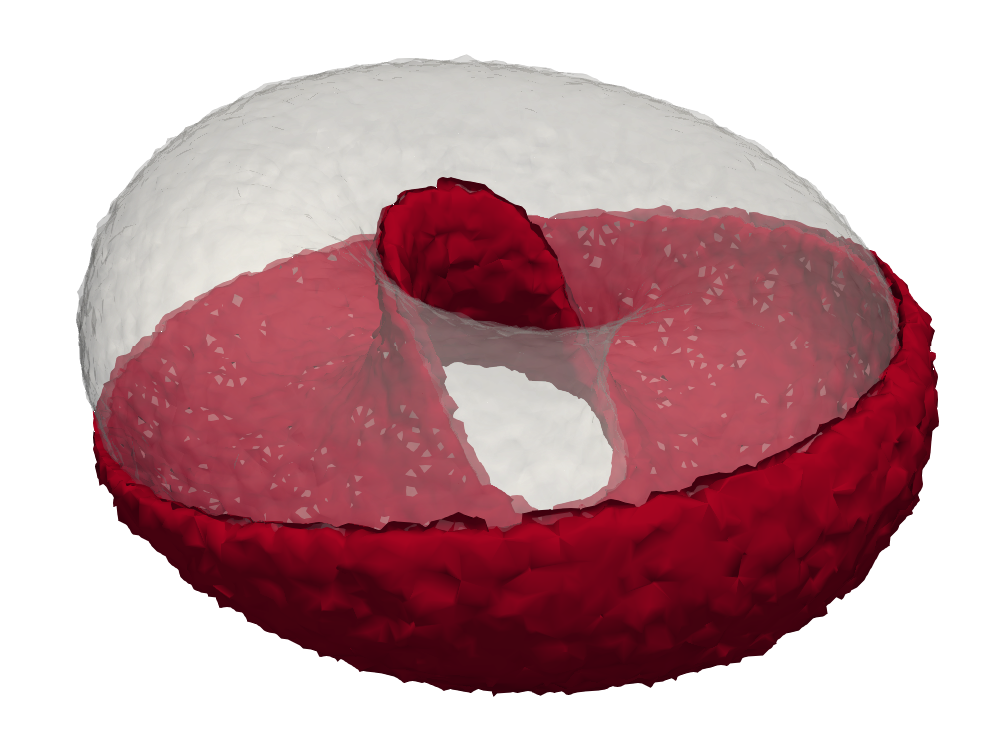}
\caption{} 
\label{fig:image_D419_phipi4-d}
\end{subfigure}
\end{center}
\caption{Configurations obtained for $\phi = \frac{\pi}{4}$ and $\beta=0.01, 0.1, 0.2, 0.5\,$.}
\label{fig:image_D419_phipi4}
\end{figure}

\begin{figure}
\begin{center}
\begin{subfigure}[c]{0.32\textwidth}
\centering
\includegraphics[scale=0.15]{./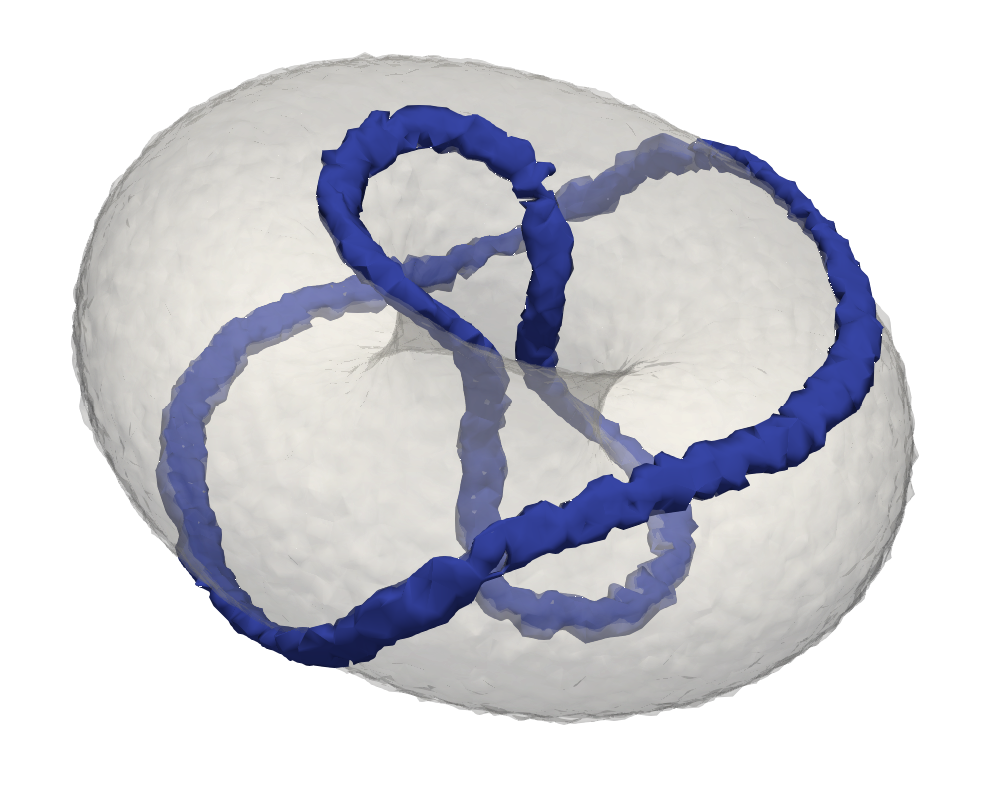}
\caption{} 
\label{fig:image_D419_phi5pi12-a}
\end{subfigure}
\begin{subfigure}[c]{0.32\textwidth}
\centering
\includegraphics[scale=0.15]{./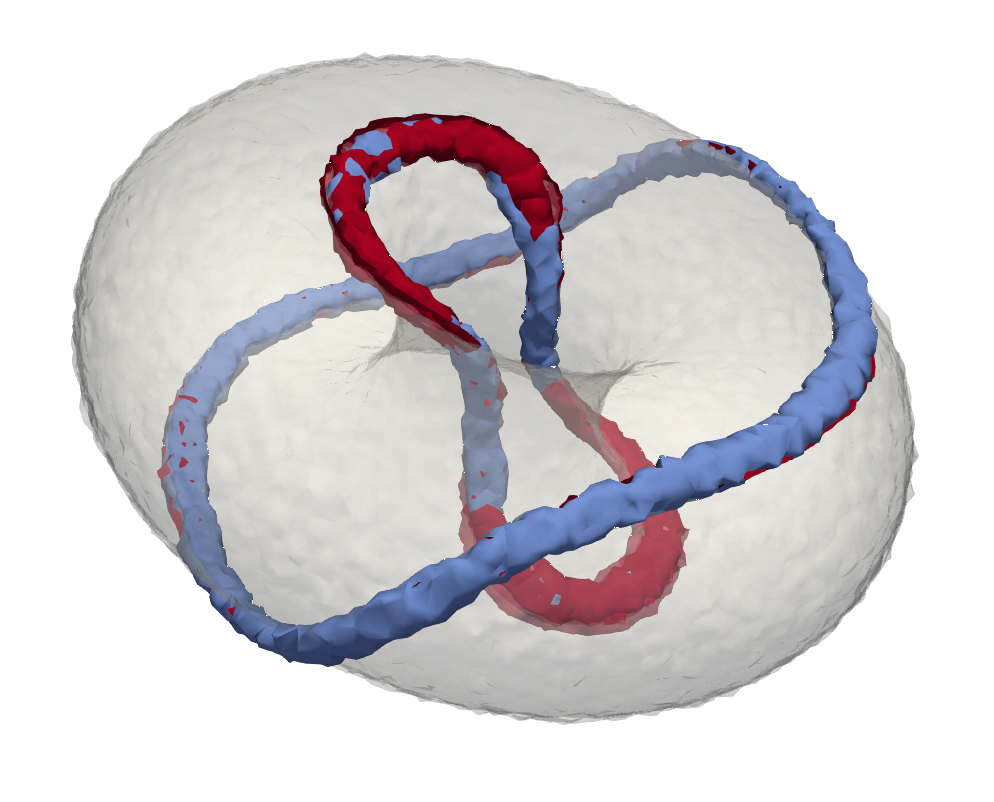}
\caption{} 
\label{fig:image_D419_phi5pi12-b}
\end{subfigure}
\begin{subfigure}[c]{0.32\textwidth}
\centering
\includegraphics[scale=0.15]{./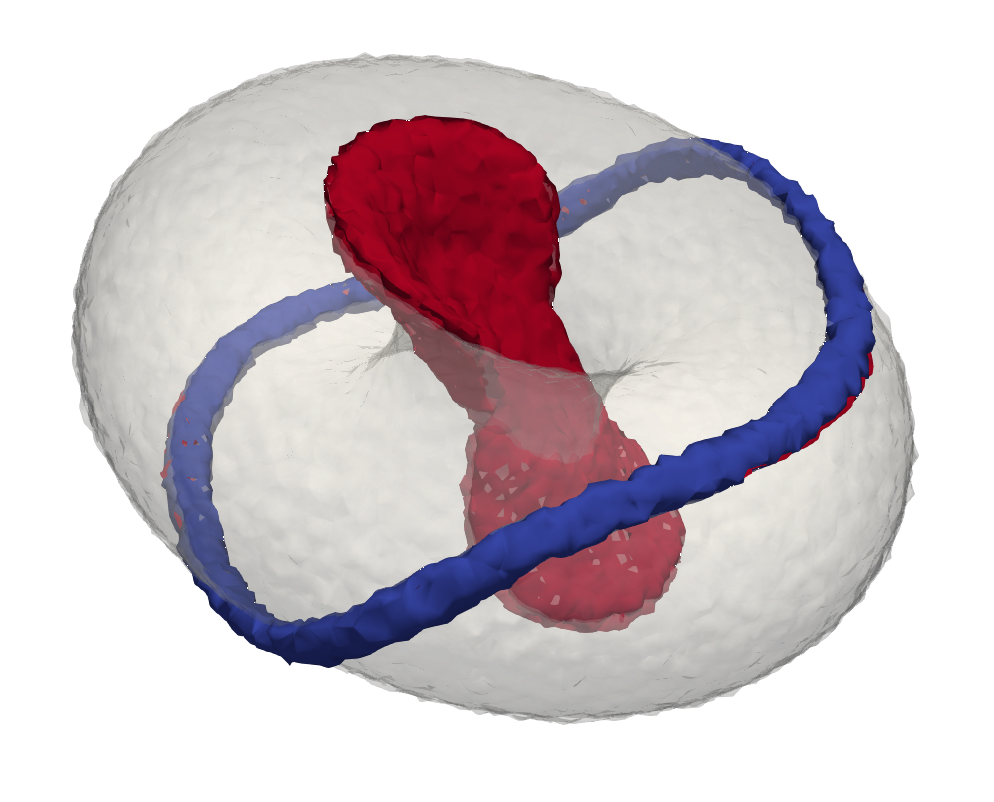}
\caption{} 
\label{fig:image_D419_phi5pi12-c}
\end{subfigure}
\begin{subfigure}[c]{0.32\textwidth}
\centering
\includegraphics[scale=0.15]{./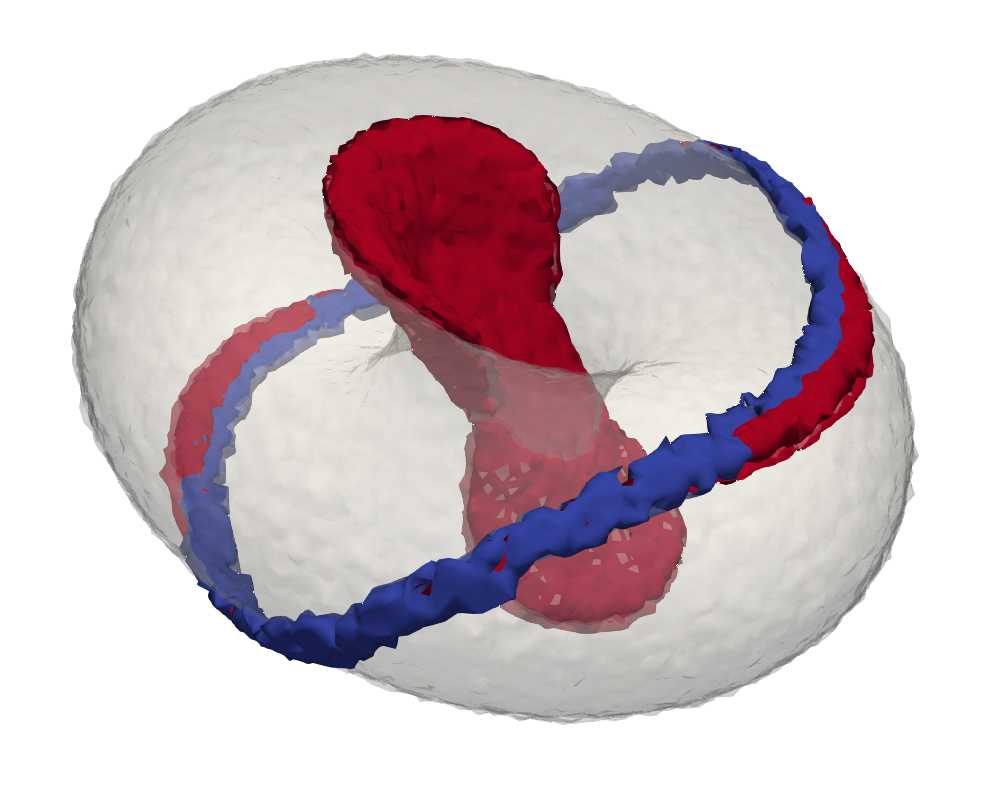}
\caption{} 
\label{fig:image_D419_phi5pi12-d}
\end{subfigure}
\begin{subfigure}[c]{0.32\textwidth}
\centering
\includegraphics[scale=0.15]{./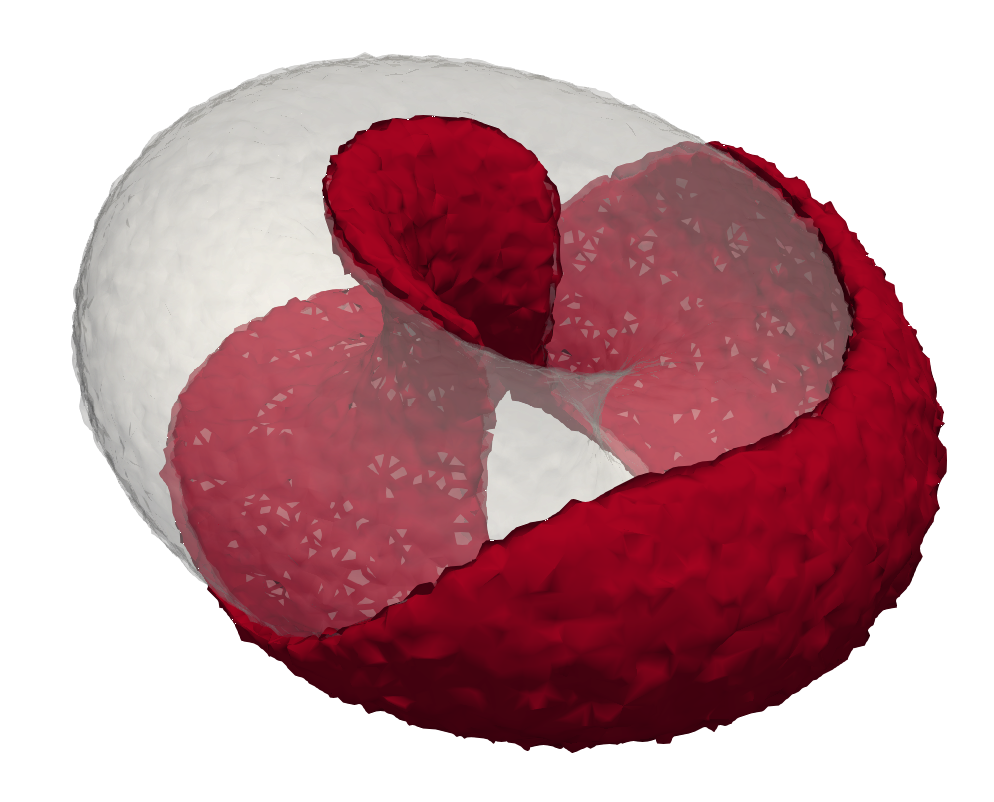}
\caption{} 
\label{fig:image_D419_phi5pi12-e}
\end{subfigure}
\end{center}
\caption{Configurations obtained for $\phi = \frac{5\pi}{12}$ and $\beta=0.01, 0.1, 0.2, 0.3, 0.4\,$.}
\label{fig:image_D419_phi5pi12}
\end{figure}

\begin{figure}
\begin{center}
\begin{subfigure}[c]{0.32\textwidth}
\centering
\includegraphics[scale=0.15]{./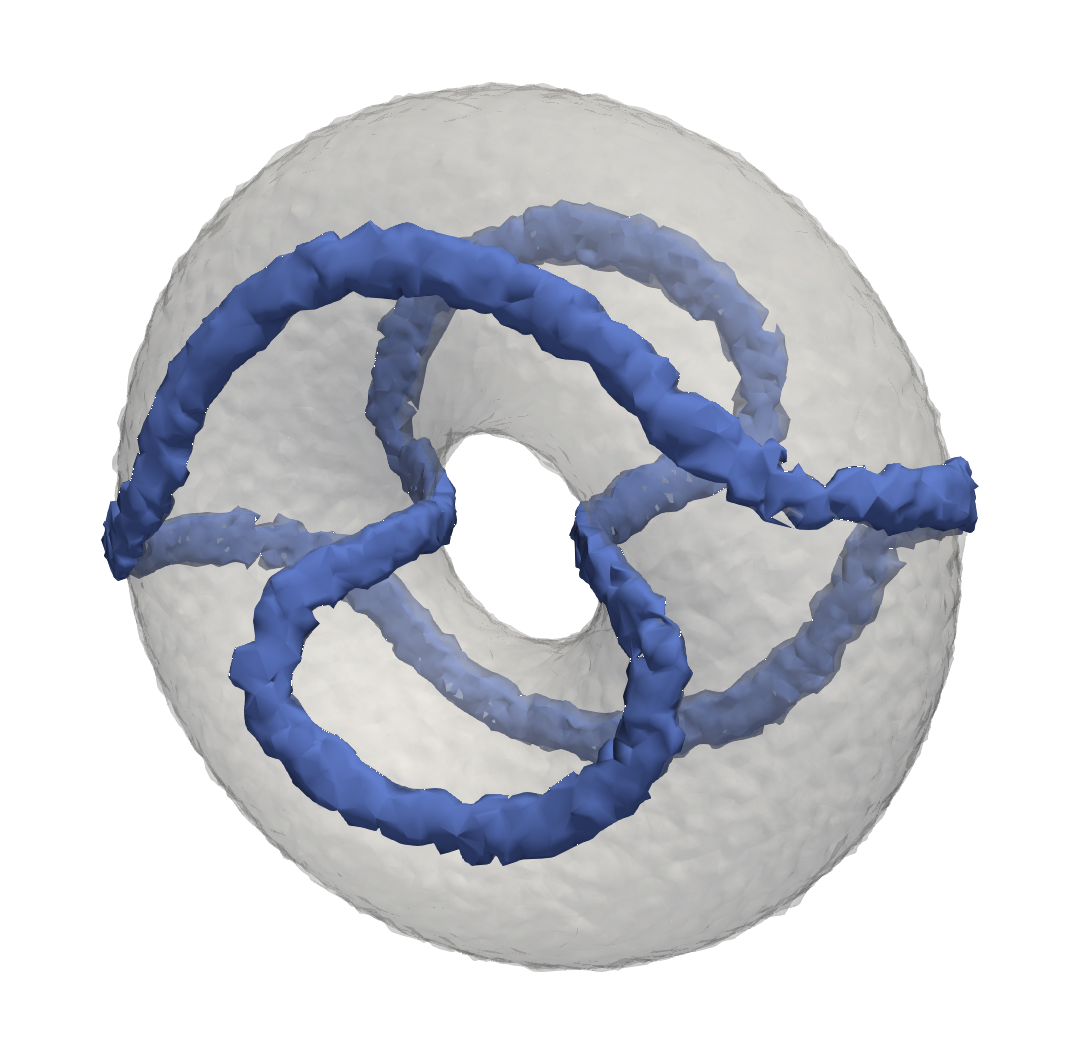}
\caption{} 
\label{fig:image_D419_phi15-a}
\end{subfigure}
\begin{subfigure}[c]{0.32\textwidth}
\centering
\includegraphics[scale=0.15]{./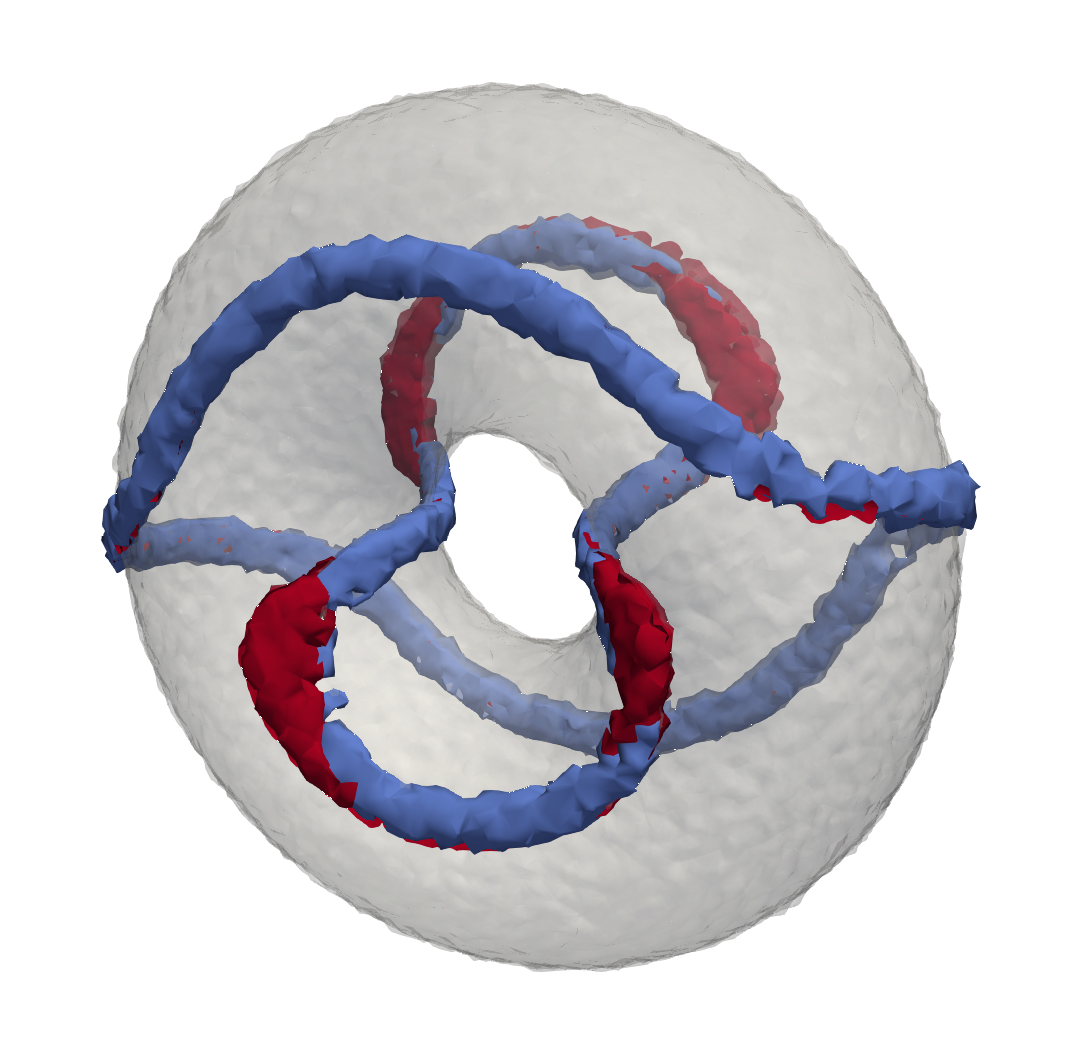}
\caption{} 
\label{fig:image_D419_phi15-b}
\end{subfigure}
\begin{subfigure}[c]{0.32\textwidth}
\centering
\includegraphics[scale=0.15]{./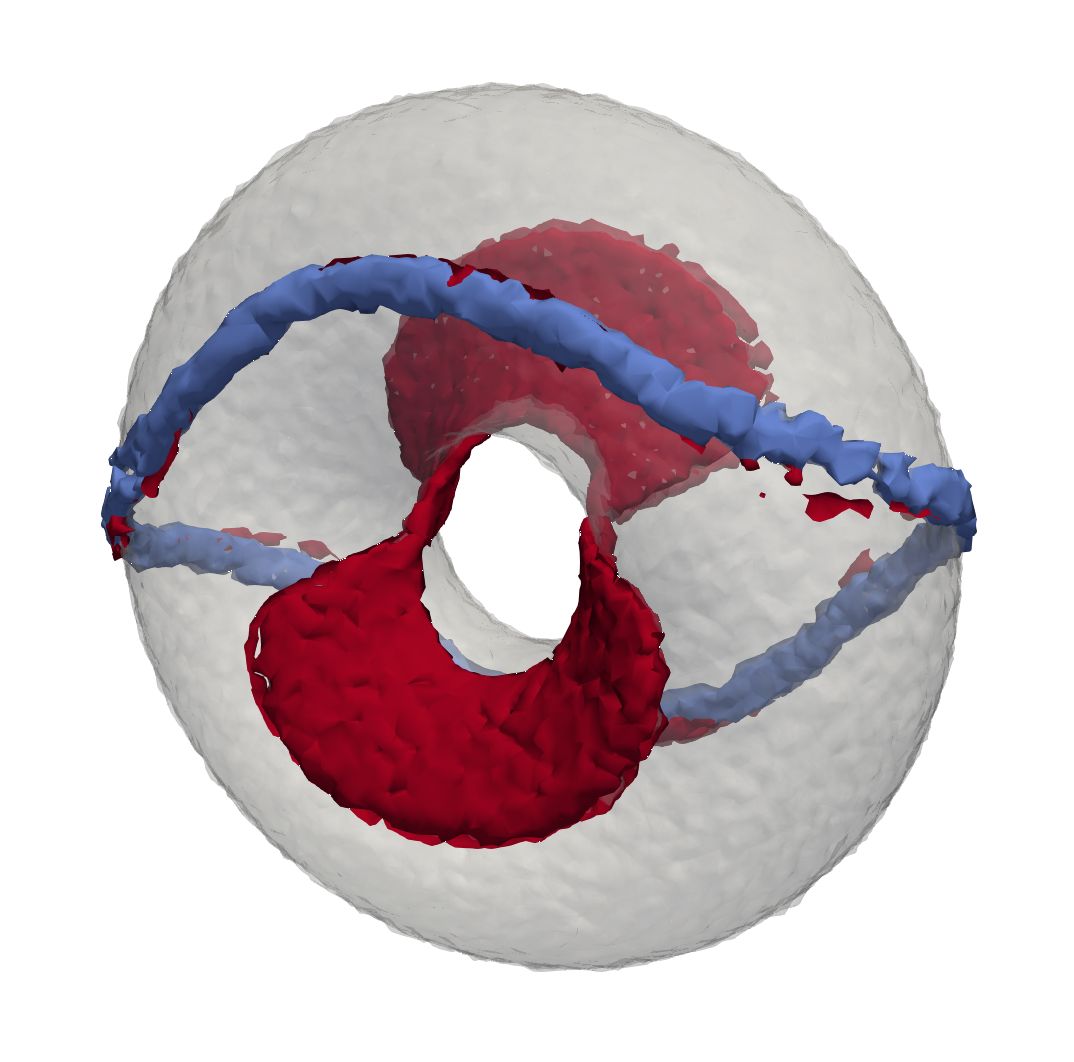}
\caption{} 
\label{fig:image_D419_phi15-c}
\end{subfigure}
\begin{subfigure}[c]{0.32\textwidth}
\centering
\includegraphics[scale=0.15]{./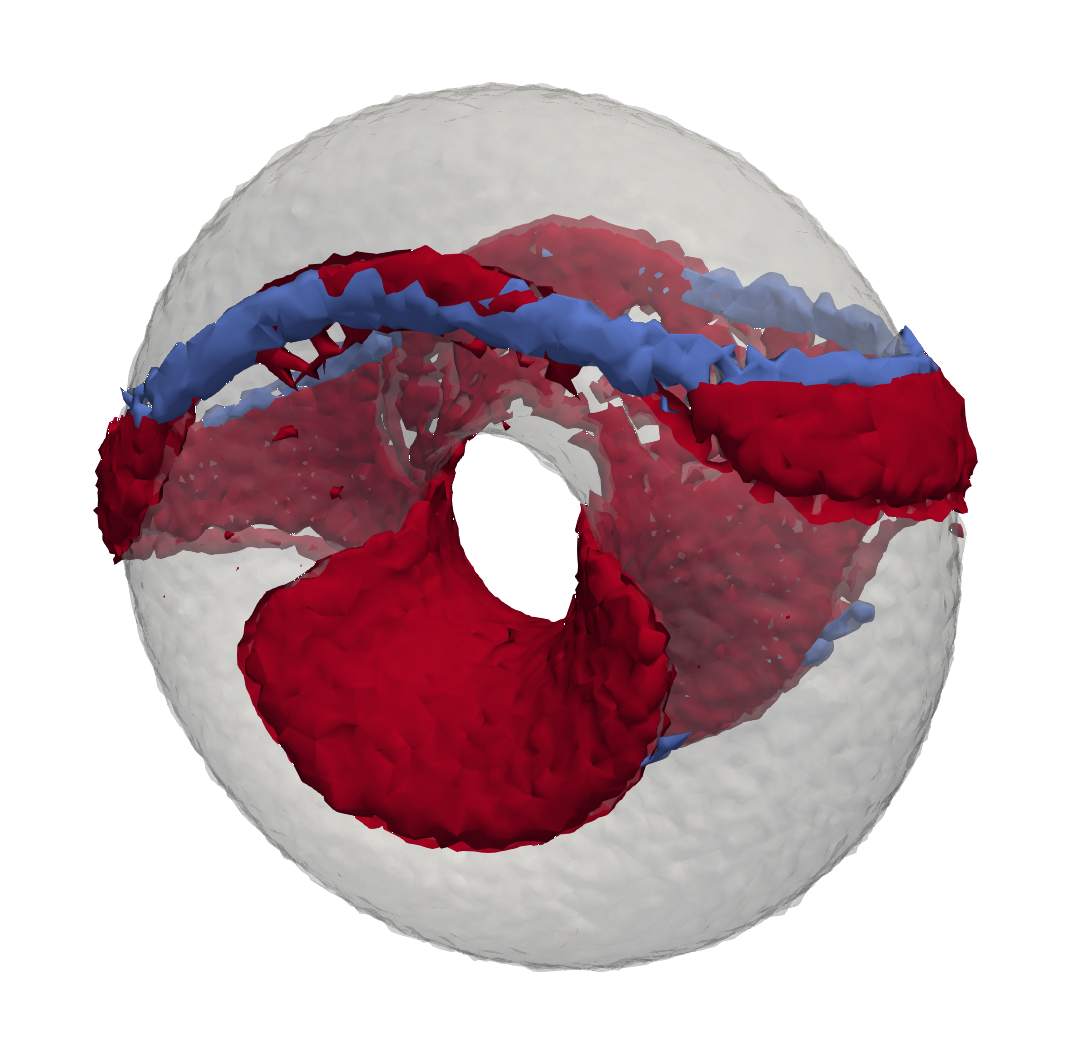}
\caption{} 
\label{fig:image_D419_phi15-d}
\end{subfigure}
\begin{subfigure}[c]{0.32\textwidth}
\centering
\includegraphics[scale=0.15]{./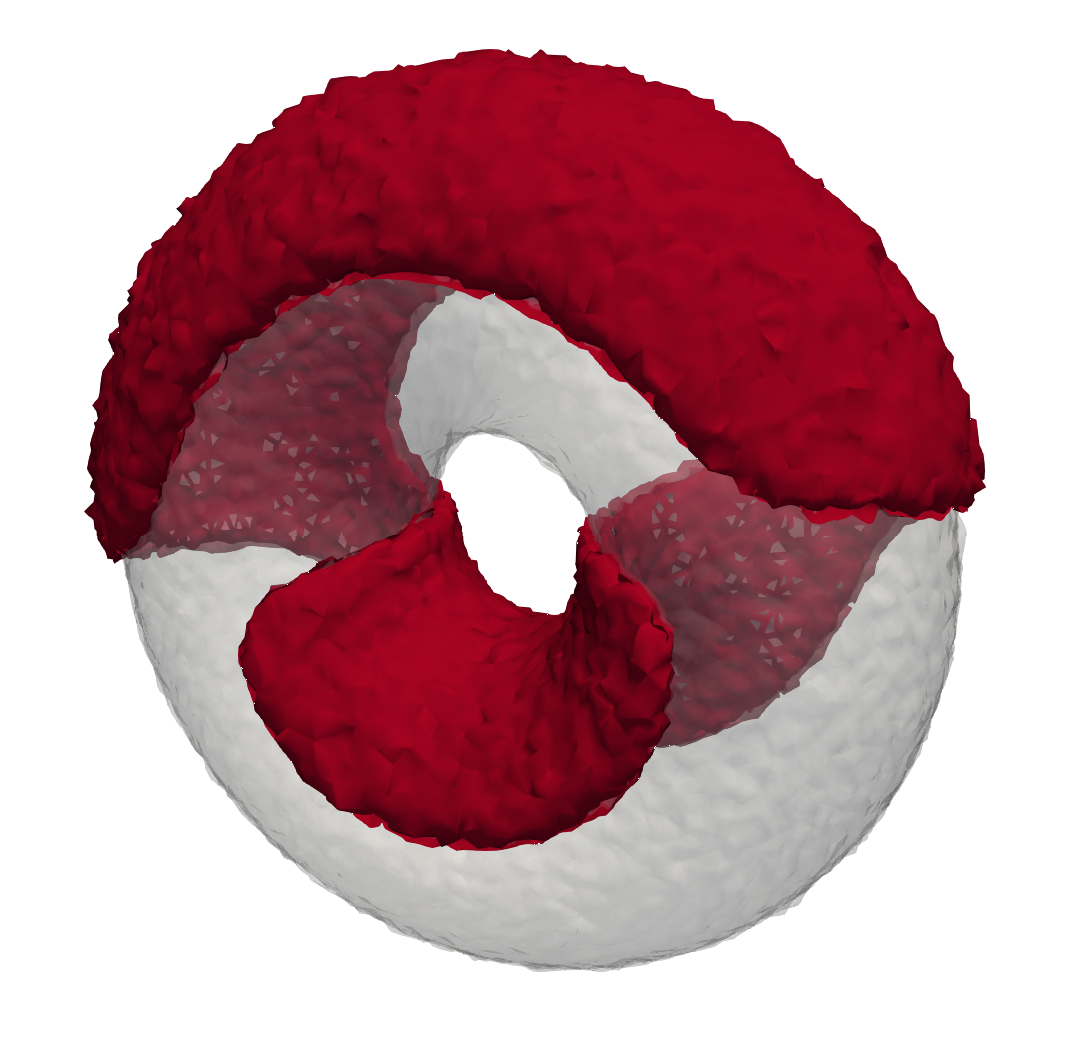}
\caption{} 
\label{fig:image_D419_phi15-e}
\end{subfigure}
\end{center}
\caption{Configurations obtained for $\phi = 1.5$ and $\beta=0.01, 0.1, 0.2, 0.3, 0.4\,$.}
\label{fig:image_D419_phi15}
\end{figure}

\begin{figure}[H]
\begin{center}
\includegraphics[scale=0.7]{./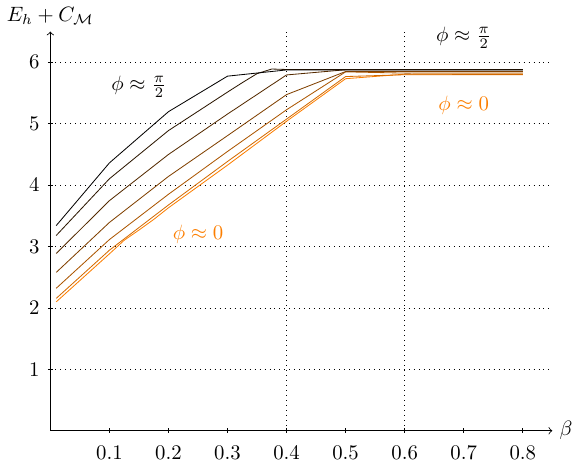}
\hspace*{0.5cm}
\includegraphics[scale=0.7]{./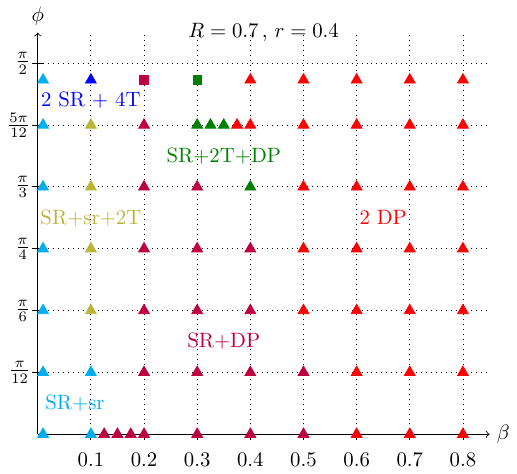}
\end{center}
\caption{Left: Energy of minimizers as function of $\beta$ around the donut shape ($R/r=1.75$) for values of $\phi\in\{0,\frac{1\pi}{12},\frac{1\pi}{6},\frac{3\pi}{12},\frac{1\pi}{3},\frac{5\pi}{12},\frac{11\pi}{24}\}$ between $0$ (black line) and $\frac{11\pi}{24}$ (orange line). 
The mesh consists of around $419\, 000$ cells of size $h=0.04$ around the particle surface.
Right: Defect configuration corresponding to the minimal energy for given $\phi$ and $\beta$. Triangles indicate simulations with $4\ 000$ iterations, squares with $8\ 000$. 
We observe configurations with two point defects (2 DP), two Saturn rings (one larger than the other, SR+sr) and Saturn rings with non-trivial surface $T$ of one (2SR+2T and 2SR+4T) as well as a single Saturn ring-point defect combination with (SR+2T+DP) or without surface $T$ (SR+DP). See Figures~\ref{fig:image_D419_phi0}, \ref{fig:image_D419_phipi4}, \ref{fig:image_D419_phi5pi12} and \ref{fig:image_D419_phi15} for images of these configurations.}
\label{fig:diagr_D_E_beta_phi_2}
\end{figure}

\begin{figure}
\begin{center}
\begin{subfigure}[c]{0.32\textwidth}
\centering
\includegraphics[scale=0.22]{./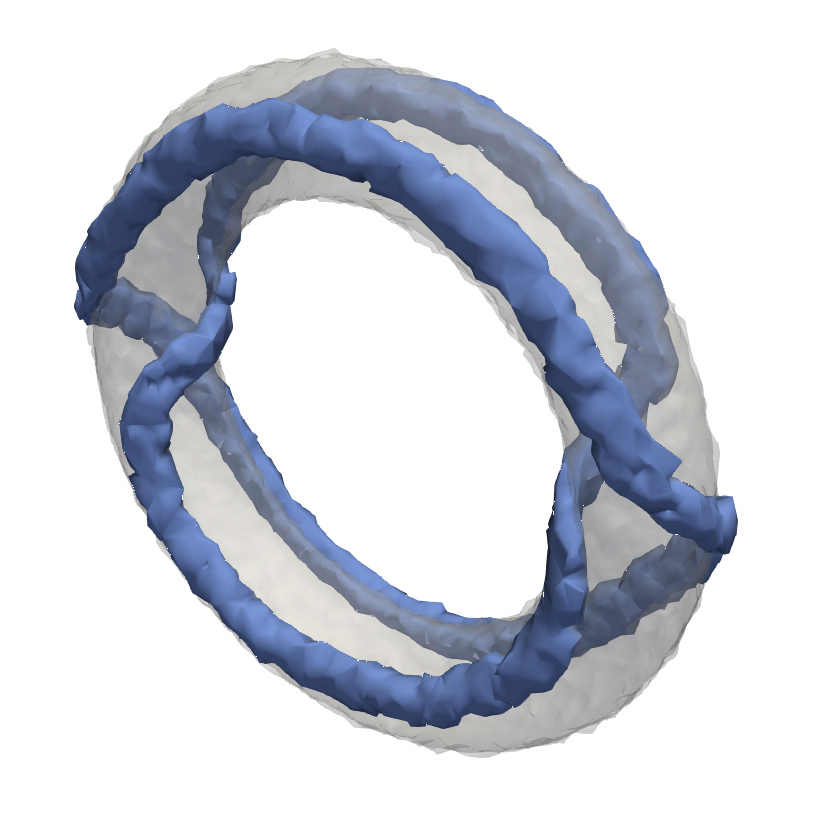}
\caption{} 
\label{fig:image_D105_phi15-a}
\end{subfigure}
\begin{subfigure}[c]{0.32\textwidth}
\centering
\includegraphics[scale=0.22]{./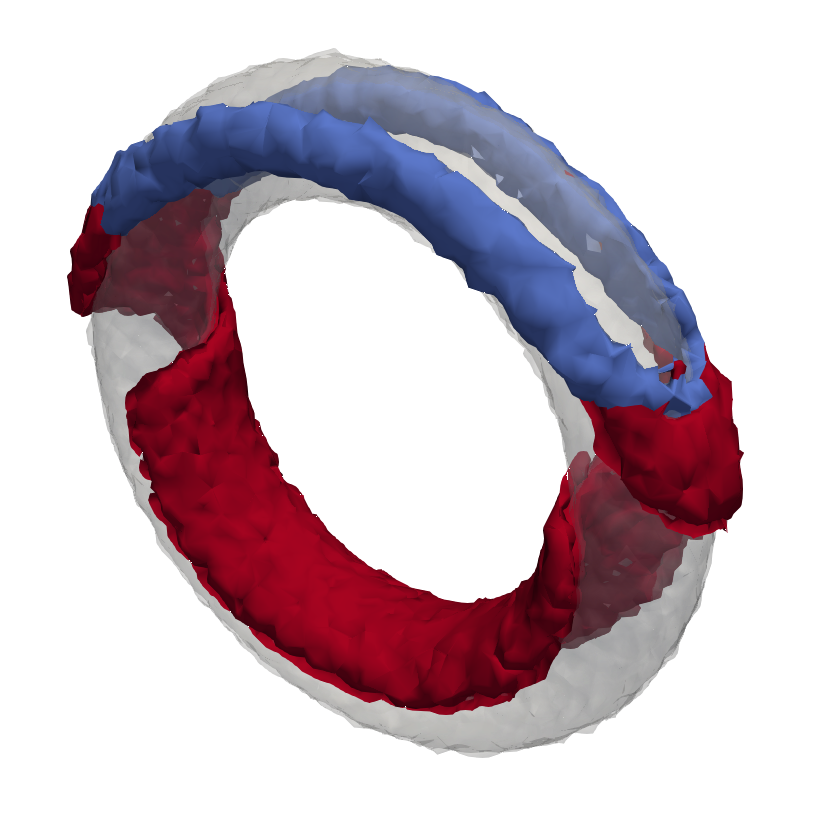}
\caption{} 
\label{fig:image_D105_phi15-b}
\end{subfigure}
\begin{subfigure}[c]{0.32\textwidth}
\centering
\includegraphics[scale=0.22]{./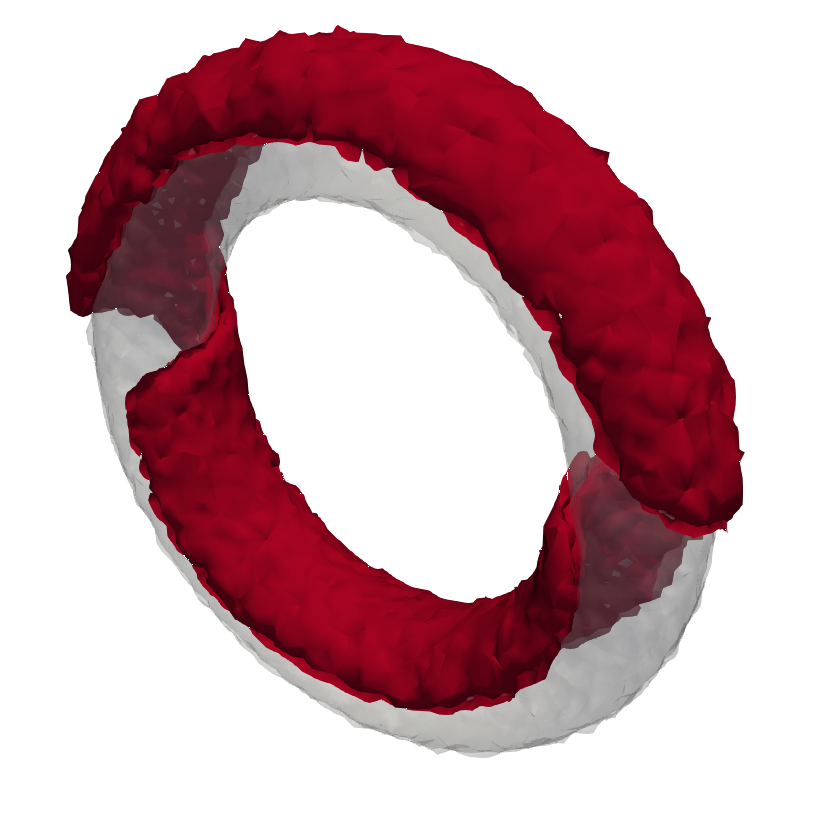}
\caption{} 
\label{fig:image_D105_phi15-c}
\end{subfigure}
\end{center}
\caption{Configurations obtained for $\phi = 1.5$ and $\beta=0.01, 0.1, 0.2\,$. The ratio of radii for this donut is $R/r=5$.}
\label{fig:image_D105_phi15}
\end{figure}


\subsection{Croissant-shaped particle}

Our interest in croissant-shaped particles is twofold:
\begin{enumerate}
\item Until now, both $S$ has been entirely included in the boundary layer $\M_h$. 
We would therefore like to give an example of when $S$ detaches from $\M$ and have parts inside $\Omega$.
The examples given here further illustrate the variety and complexity of minimizers $(T,S)$ of the limit problem.
\item From the point of view of applications it appears that particles similar to our croissant shape ("horse shoe") are interesting since they are able to self-assemble into two- and three-dimensional nematic colloidal crystals \cite{Aplinc2019}. 
The precise understanding of defect structure could therefore be valuable e.g.\ for tunable metamaterials.
\end{enumerate}

The particle we use in our simulations is made from five components. 
The central piece is a half-torus to which two cylindrical parts are attached.
The remaining open ends of the cylinders are closed using two half-spheres.
Since the croissant has less symmetries than the peanut, we describe its orientation relative to the external field by the two angles $\phi$ (rotation around the $x_1-$axis) and $\psi$ (rotation around the $x_2-$axis).
The two radii $R,r$ of the torus and the length $L$ of the cylinders are chosen as $R=0.7$, $r=0.4$ and $L=0.5$.
Those values are obtained heuristically as we expect for well chosen orientation and $\beta$ to observe a non-trivial surface $T\restr\Omega$.
Indeed, if the torus lies parallel to the $x_1 x_2-$plane, then in order to shorten the length of $S$, we expect the line $S$ to directly connect the two half-spheres and hence a surface $T$ outside the boundary layer $\M_h$ connects $\Gamma$ to $S\restr\Omega$.
Varying the parameter $\beta$ for $\phi=\psi=0$, we obtain the expected intermediate configuration, see Figure~\ref{fig:image_C_view}. 
The values of $\beta$ for which these transitions occur are contained in the two intervals $(0.25,0.29)$ and $(0.4,0.42)$. 
It is therefore possible to qualitatively study the optimality conditions from \cite[Prop.~7.2]{ACS2024}.
\begin{enumerate}
\item The curvature of $S$ should behave like $\beta^{-1}$ and indeed we find qualitatively that for increasing $\beta$ the curvature of $S\restr\Omega$ decreases, see Figure~\ref{fig:image_C_curv_S}.
\item We also observe a surface $T$ that has a non-trivial components $T\restr\M$ and $T\restr\Omega$ and detaches from $\M$ in a line other than $\Gamma$, see Figure~\ref{fig:image_C_surf_T_Ome_M}.
We observe that the angle formed by the normal vectors of $\M$ and $T$ do not form a right angle as predicted by Young's law.
This can also be observed in Figure~\ref{fig:image_C_curv_T} (a).
\item The surface $T\restr\Omega$ in Figure~\ref{fig:image_C_view} and \ref{fig:image_C_surf_T_Ome_M} appears to be flat, verifying the optimality condition of vanishing mean curvature.
In Figure~\ref{fig:image_C_curv_T} we give an example where $T\restr\Omega$ has non-vanishing curvature but the curvature in the two depicted slices have opposite sign.
\end{enumerate}

\begin{figure}
\begin{center}
\begin{subfigure}[c]{0.32\textwidth}
\centering
\includegraphics[scale=0.15]{./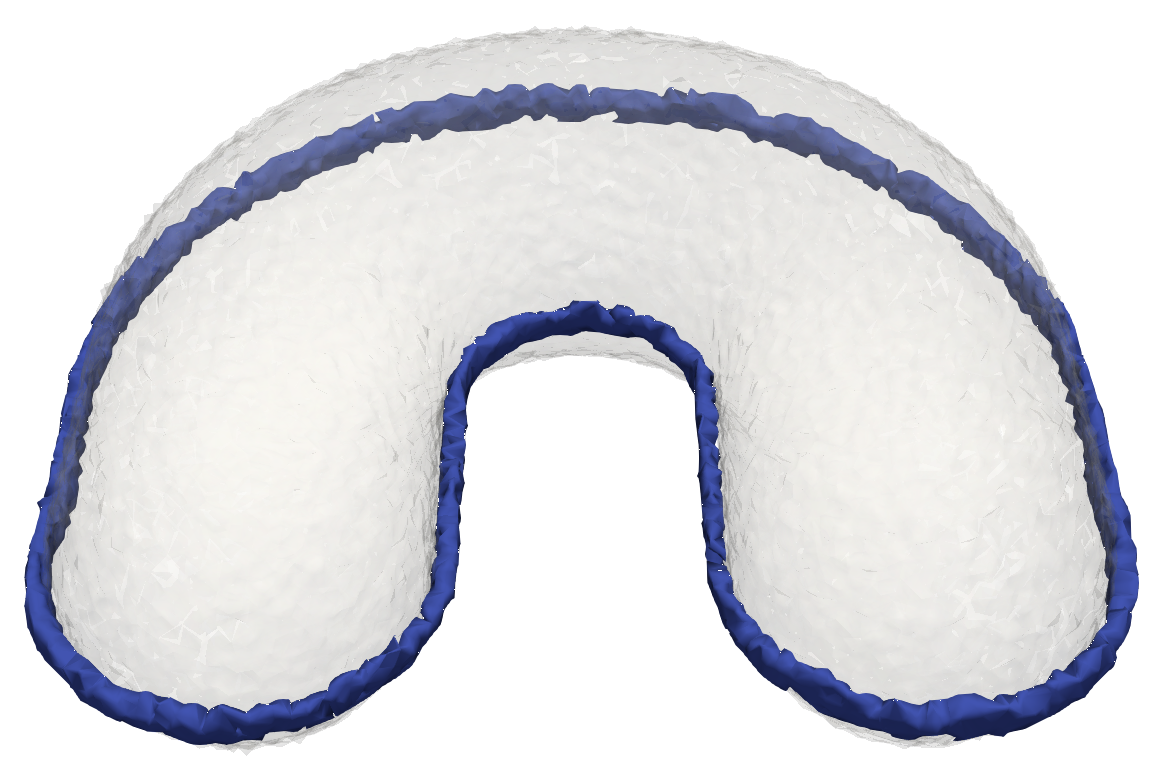}
\caption{} 
\label{fig:image_C_view-a}
\end{subfigure}
\begin{subfigure}[c]{0.32\textwidth}
\centering
\includegraphics[scale=0.15]{./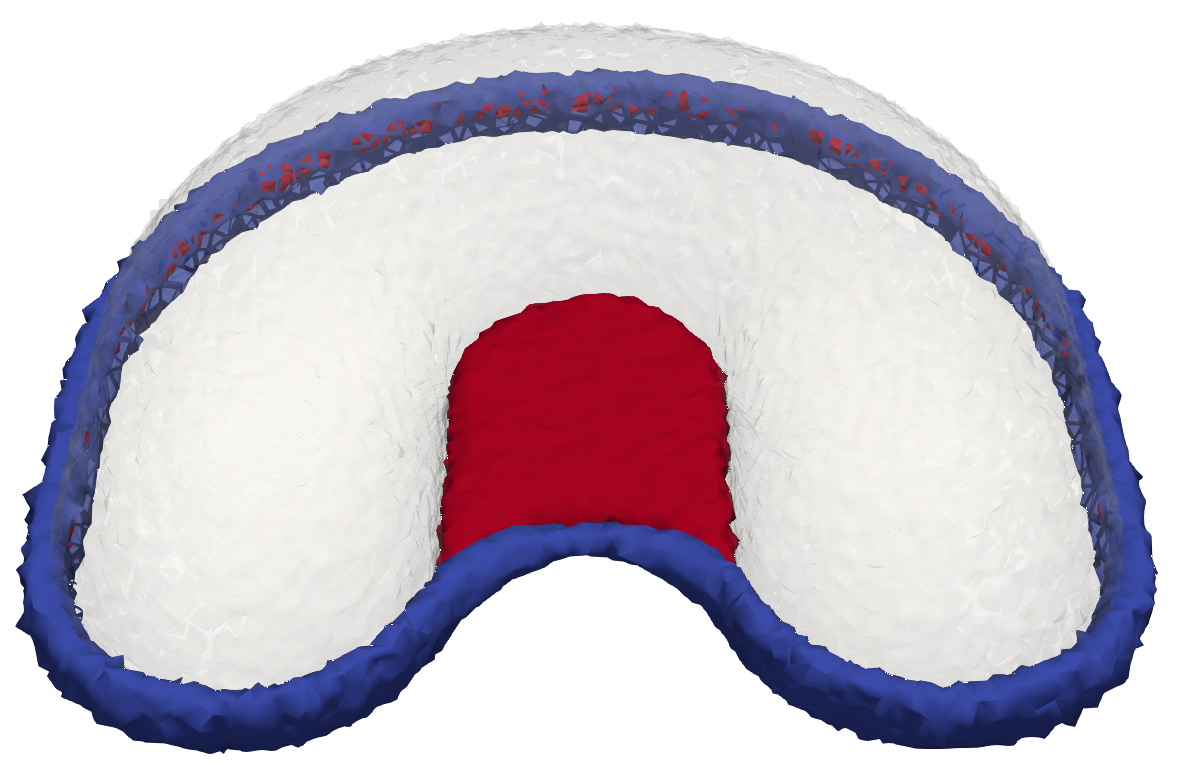}
\caption{} 
\label{fig:image_C_view-b}
\end{subfigure}
\begin{subfigure}[c]{0.32\textwidth}
\centering
\includegraphics[scale=0.15]{./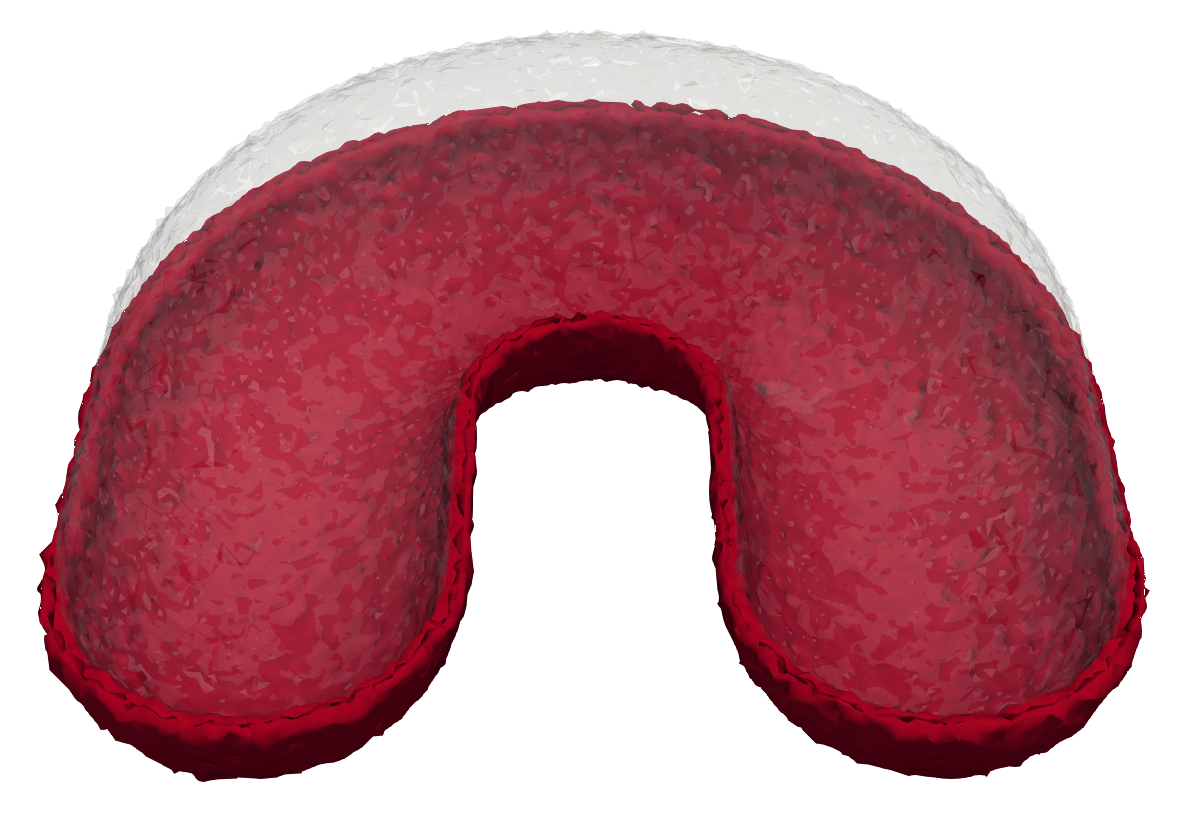}
\caption{} 
\label{fig:image_C_view-c}
\end{subfigure}
\end{center}
\caption{Configurations obtained for $\beta=0.01,0.33,0.67$ and $\phi = \psi = 0$ after $4\,000$ iterations.}
\label{fig:image_C_view}
\end{figure}

\begin{figure}
\begin{center}
\begin{subfigure}[c]{0.24\textwidth}
\centering
\includegraphics[scale=0.17]{./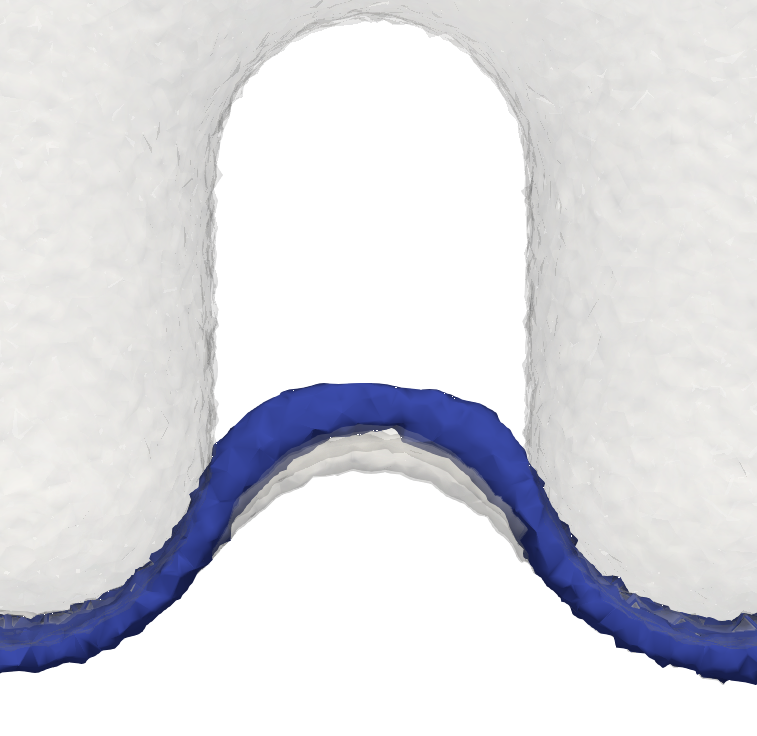}
\caption{} 
\label{fig:image_C_curv_S-a}
\end{subfigure}
\begin{subfigure}[c]{0.24\textwidth}
\centering
\includegraphics[scale=0.17]{./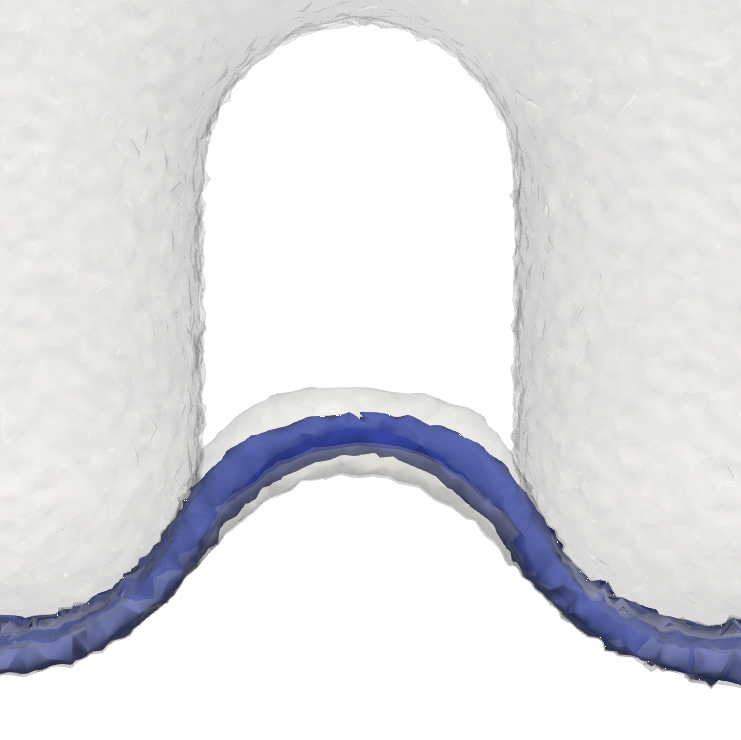}
\caption{} 
\label{fig:image_C_curv_S-b}
\end{subfigure}
\begin{subfigure}[c]{0.24\textwidth}
\centering
\includegraphics[scale=0.17]{./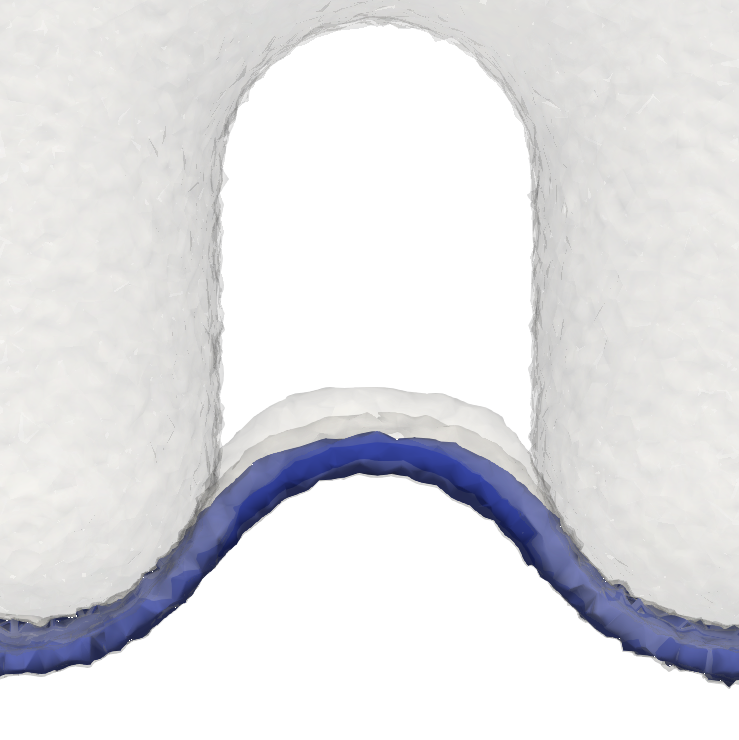}
\caption{} 
\label{fig:image_C_curv_S-c}
\end{subfigure}
\begin{subfigure}[c]{0.24\textwidth}
\centering
\includegraphics[scale=0.17]{./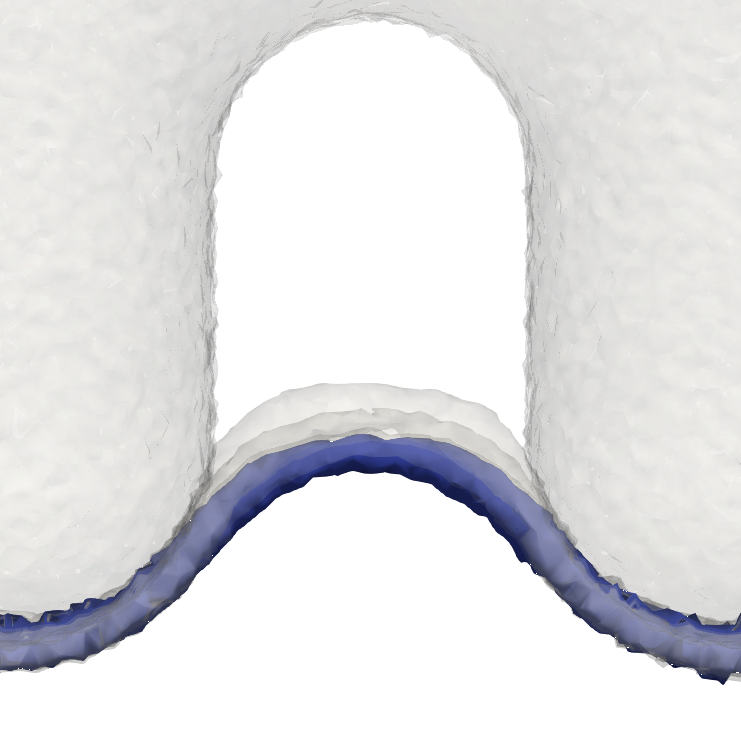}
\caption{} 
\label{fig:image_C_curv_S-d}
\end{subfigure}
\end{center}
\caption{Evolution of the singularity line $S\restr\Omega$ as $\beta$ increases, from left to right $\beta=0.33,0.375,0.39,0.4$. 
For comparison all four lines are indicated in all four images.
Configurations obtained for $\phi = \psi = 0$ and $4\,000$ iterations.}
\label{fig:image_C_curv_S}
\end{figure}

\begin{figure}
\begin{center}
\begin{subfigure}[c]{0.49\textwidth}
\centering
\includegraphics[scale=0.20]{./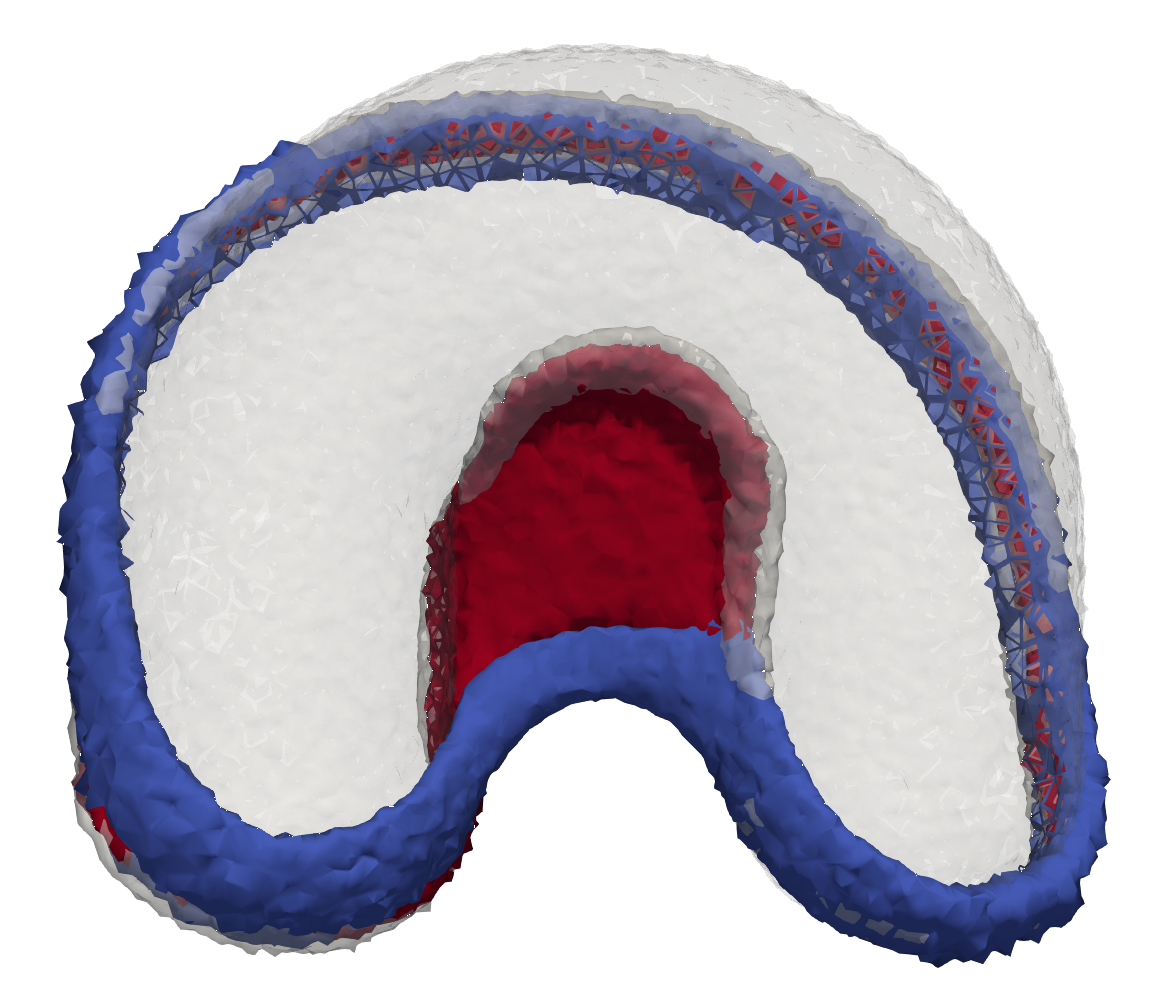}
\caption{} 
\label{fig:image_C_surf_T_Ome_M-a}
\end{subfigure}
\begin{subfigure}[c]{0.49\textwidth}
\centering
\includegraphics[scale=0.17]{./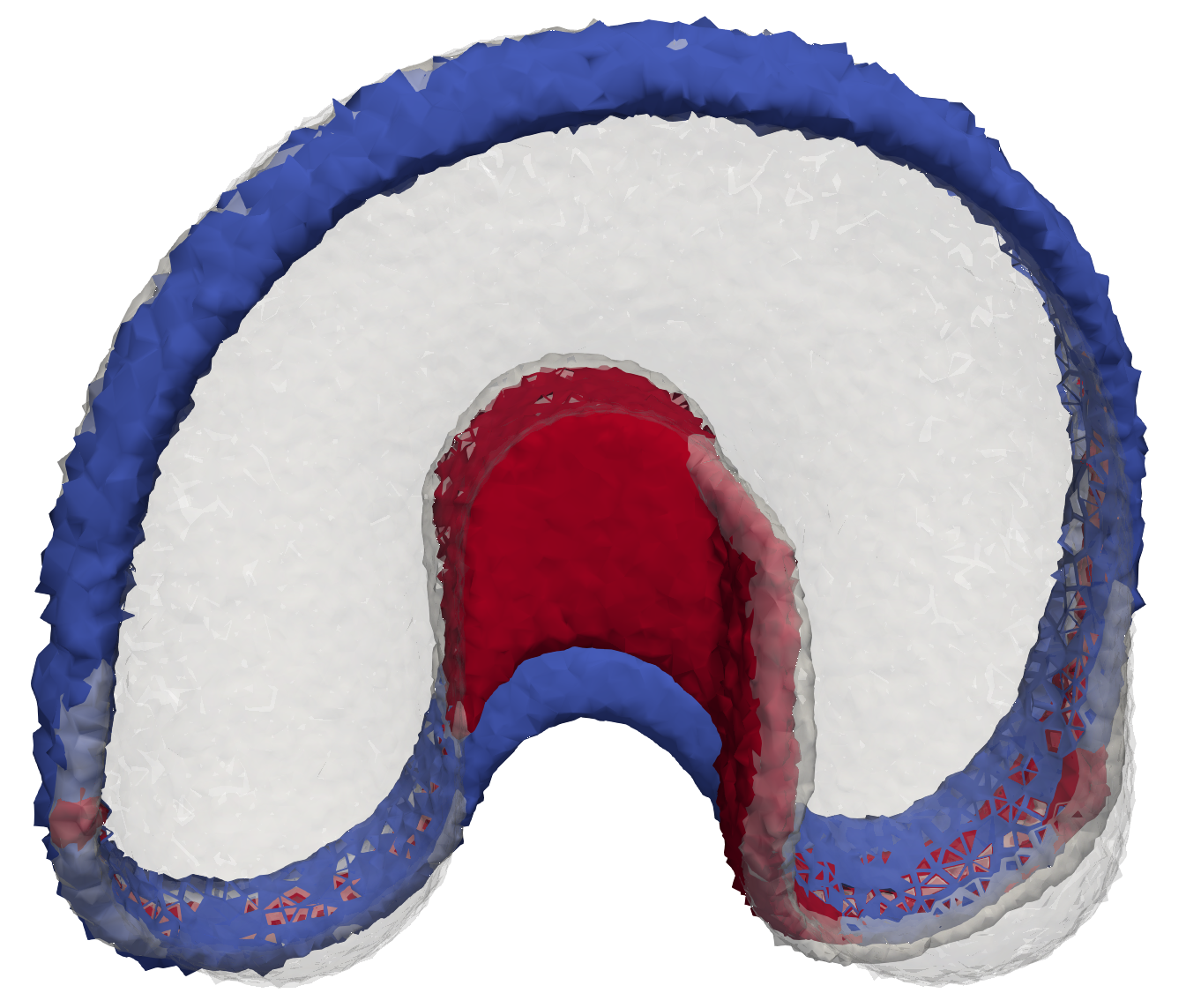}
\caption{} 
\label{fig:image_C_surf_T_Ome_M-b}
\end{subfigure}
\vspace*{0.5cm}
\begin{subfigure}[c]{0.49\textwidth}
\centering
\includegraphics[scale=0.14]{./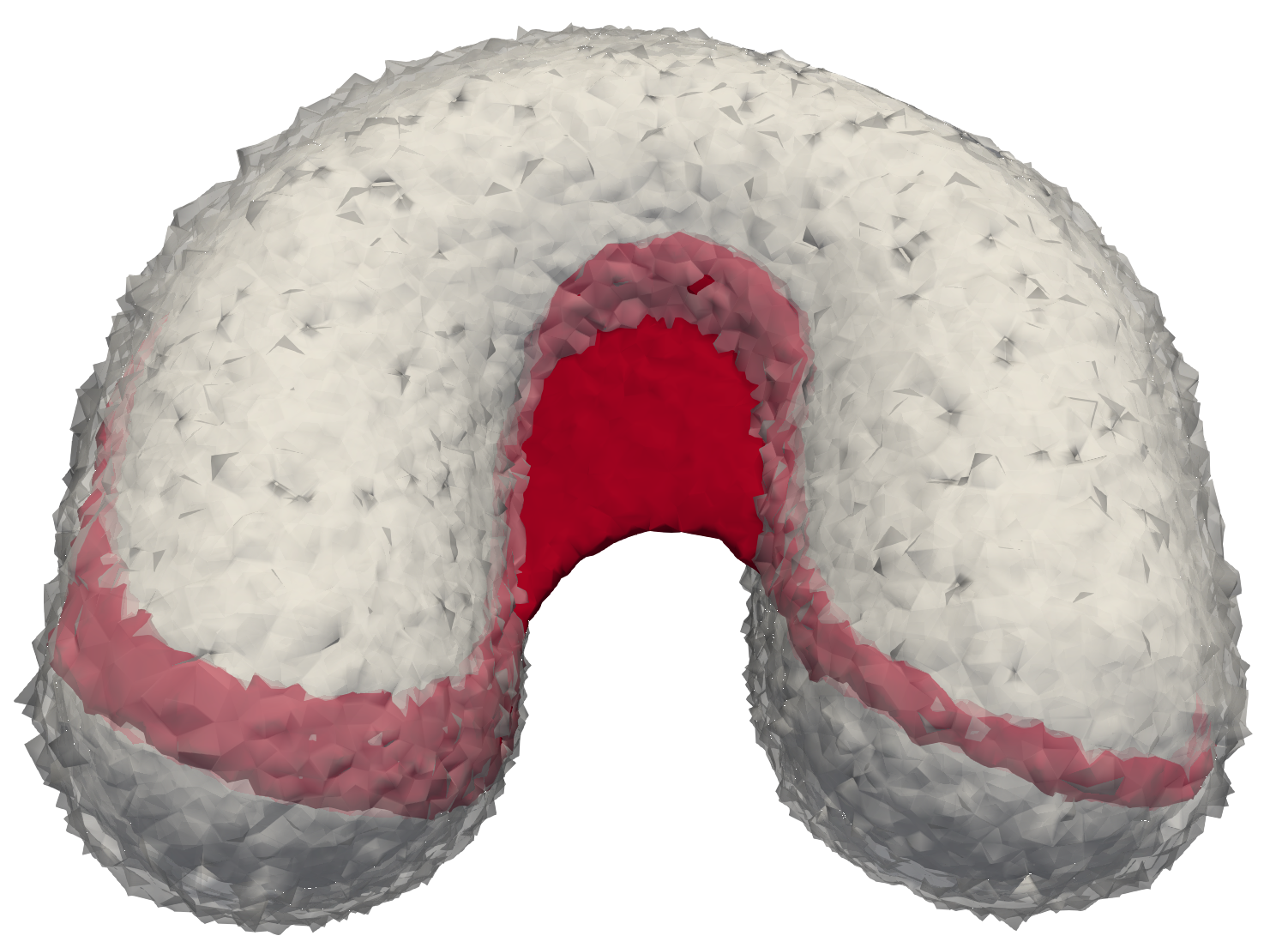}
\caption{} 
\label{fig:image_C_surf_T_Ome_M-c}
\end{subfigure}
\begin{subfigure}[c]{0.49\textwidth}
\centering
\includegraphics[scale=0.14]{./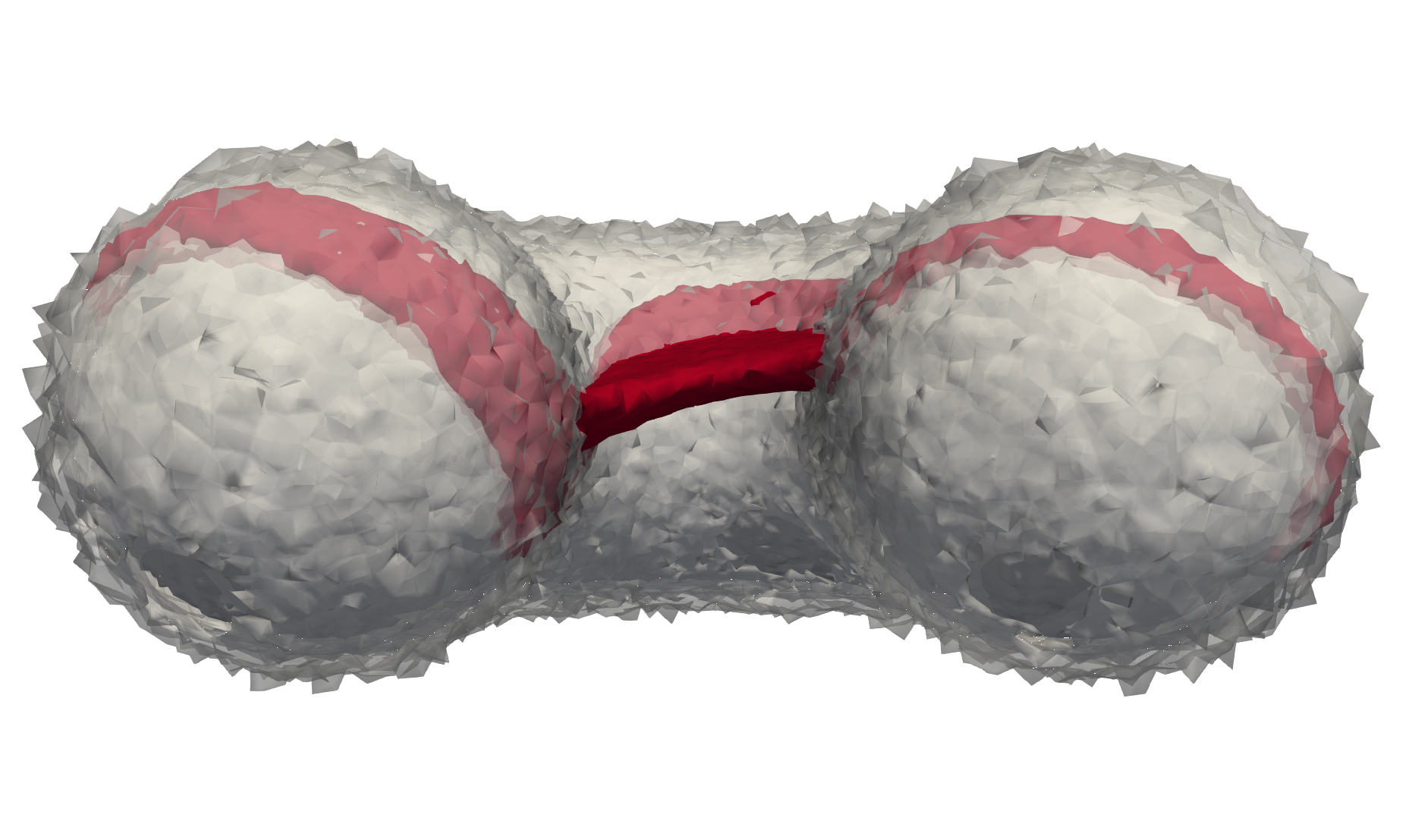}
\caption{} 
\label{fig:image_C_surf_T_Ome_M-d}
\end{subfigure}
\end{center}
\caption{Four views on the configuration after $8\,000$ iterations for $\phi = \frac{\pi}{4}$, $\psi = \frac{\pi}{8}$ and $\beta = 0.31$.
The line $\Gamma$ is indicated  transparently in the images of the upper row.
The bottom row shows the transparent boundary layer $\M_h$ around the solid particle $E$, allowing to distinguish $T\restr\M$ and $T\restr\Omega$.}
\label{fig:image_C_surf_T_Ome_M}
\end{figure}

\begin{figure}
\begin{center}
	\begin{subfigure}[c]{0.43\textwidth}
	\centering
	\includegraphics[scale=0.23]{./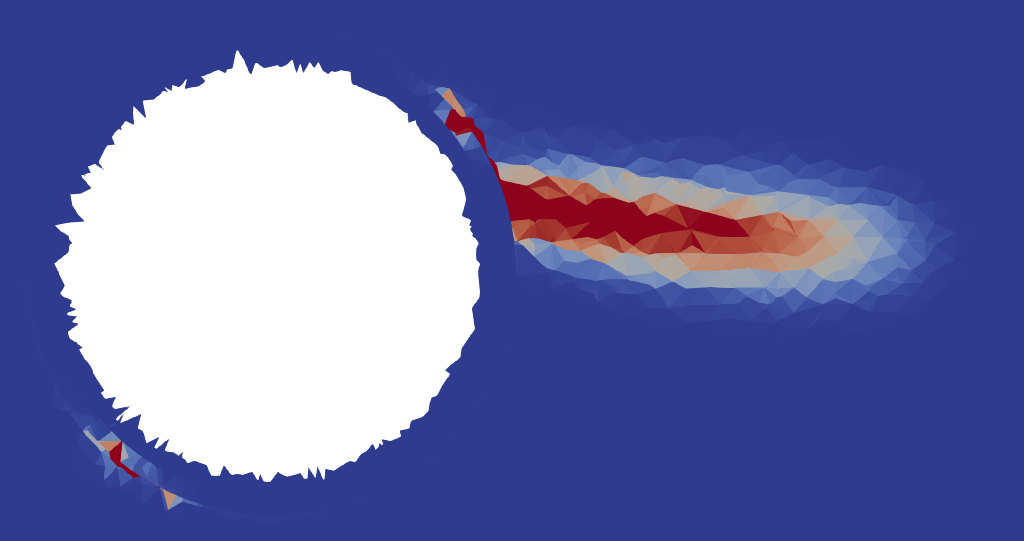}
    \caption{} 
    \label{fig:image_C_curv_T-a}
    \end{subfigure}
	\begin{subfigure}[c]{0.56\textwidth}
	\centering
	\includegraphics[scale=0.23]{./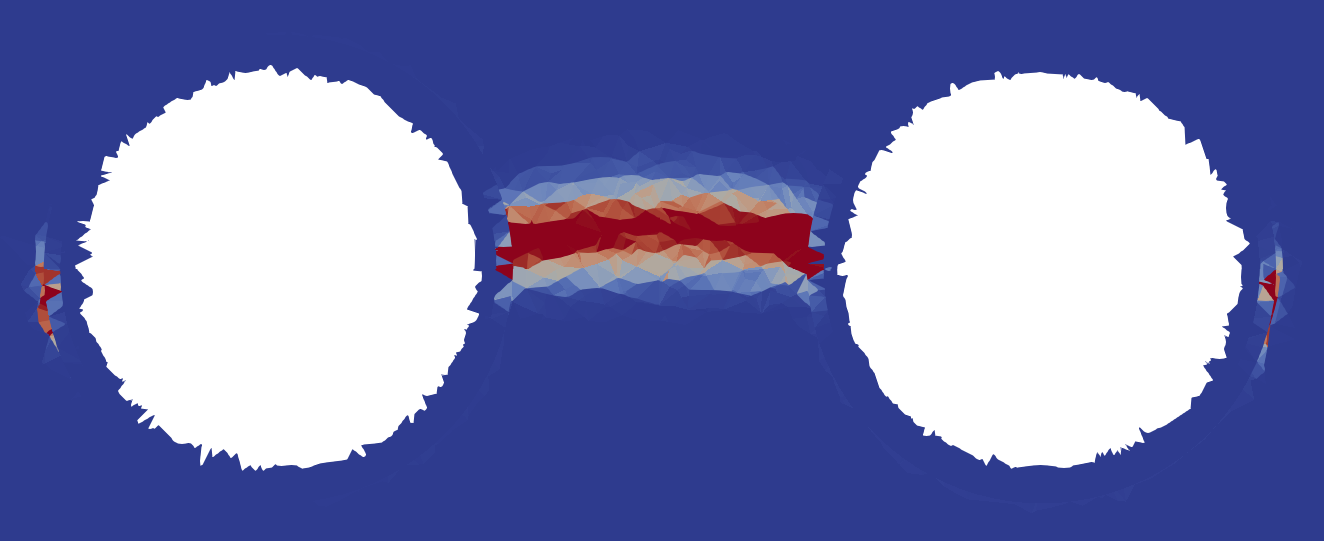}
    \caption{} 
    \label{fig:image_C_curv_T-b}
    \end{subfigure}
\end{center}
\caption{Section though the $x_2x_3-$plane (left) and $x_1x_3-$plane(right) of $T$ after $4\,000$ iterations for $\phi = \frac{\pi}{4}$, $\psi = 0$ and $\beta = 0.3$.
The part of $T$ inside $\Omega$ is curved into opposite directions in the two images.}
\label{fig:image_C_curv_T}
\end{figure}



\paragraph{Acknowledgment.} The author would like to thank Christophe Geuzaine for his help in the mesh generation with GMSH and Fran\c{c}ois Alouges and Antonin Chambolle for their support and the discussions as part of the author's PhD thesis at École Polytechnique, where the main part of this work was carried out.




\addcontentsline{toc}{section}{References}
\bibliography{LC_references}{}

@Article{Fleming1966,
  author    = {Wendell H. Fleming},
  journal   = {Transactions of the American Mathematical Society},
  title     = {Flat chains over a finite coefficient group},
  year      = {1966},
  month     = {jan},
  number    = {1},
  pages     = {160--160},
  volume    = {121},
  doi       = {10.1090/s0002-9947-1966-0185084-5},
  publisher = {American Mathematical Society ({AMS})},
}

@Article{Federer1960,
  author    = {Herbert Federer and Wendell H. Fleming},
  journal   = {The Annals of Mathematics},
  title     = {Normal and Integral Currents},
  year      = {1960},
  month     = {nov},
  number    = {3},
  pages     = {458},
  volume    = {72},
  doi       = {10.2307/1970227},
  publisher = {{JSTOR}},
}

@Article{White1999a,
  author    = {Brian White},
  journal   = {The Annals of Mathematics},
  title     = {Rectifiability of Flat Chains},
  year      = {1999},
  month     = {jul},
  number    = {1},
  pages     = {165},
  volume    = {150},
  doi       = {10.2307/121100},
  publisher = {{JSTOR}},
}

@Book{Morgan2016,
  author    = {Morgan, Frank},
  publisher = {Elsevier Ltd},
  title     = {Geometric measure theory : a beginner's guide},
  year      = {2016},
  address   = {Amsterdam},
  isbn      = {9780128044896},
}

@Book{Federer1996,
  author    = {Herbert Federer},
  editor    = {B. Eckmann and B. L. van der Waerden},
  publisher = {Springer Berlin Heidelberg},
  title     = {Geometric Measure Theory},
  year      = {1996},
  doi       = {10.1007/978-3-642-62010-2},
}

@Article{ACS2021,
  author  = {François Alouges and Antonin Chambolle and Dominik Stantejsky},
  journal = {Arch Ration Mech Anal},
  title   = {The Saturn Ring Effect in Nematic Liquid Crystals with External Field: Effective Energy and Hysteresis},
  year    = {2021},
  doi     = {https://doi.org/10.1007/s00205-021-01674-z},
}

@Book{Simon1983,
  author    = {Simon, Leon},
  publisher = {Centre for Mathematical Analysis, Australian National University},
  title     = {Lectures on geometric measure theory},
  year      = {1983},
  address   = {Canberra},
  isbn      = {0867844299},
}

@Book{Jost1991,
  author    = {Jost, Jürgen},
  publisher = {Wiley},
  title     = {Two-dimensional geometric variational problems},
  year      = {1991},
  address   = {Chichester New York},
  isbn      = {9780471928393},
}

@Book{Courant1977,
  author    = {Courant, Richard},
  publisher = {Springer Verlag},
  title     = {Dirichlet's principle, conformal mapping, and minimal surfaces : reprint},
  year      = {1977},
  address   = {New York},
  isbn      = {9780387902463},
}

@Book{Struwe2014,
  author    = {Struwe, Michael},
  publisher = {Princeton University Press},
  title     = {Plateau's Problem and the Calculus of Variations.},
  year      = {2014},
  isbn      = {9781400860210},
  date      = {2014-07-14},
  ean       = {9781400860210},
  pagetotal = {160},
  url       = {https://www.ebook.de/de/product/22844864/michael_struwe_plateau_s_problem_and_the_calculus_of_variations_mn_35.html},
}

@Book{Nitsche2011,
  author    = {Nitsche, Johannes C. C.},
  publisher = {Cambridge University Press},
  title     = {Lectures on Minimal Surfaces},
  year      = {2011},
  isbn      = {0521137780},
  date      = {2011-01-07},
  ean       = {9780521137782},
  pagetotal = {592},
  url       = {https://www.ebook.de/de/product/13945383/johannes_c_c_nitsche_lectures_on_minimal_surfaces.html},
}

@Article{Lagrange1761,
  author  = {Lagrange, J. L.},
  journal = {Miscellanea Taurinensia},
  title   = {Essai d'une nouvelle méthode pour détérminer les maxima et les minima des formules intégrales indéfinies},
  year    = {1760-1761},
  pages   = {335-362},
  url     = {https://gallica.bnf.fr/ark:/12148/bpt6k2155691/f385},
}

@Article{Douglas1931,
  author    = {Jesse Douglas},
  journal   = {Transactions of the American Mathematical Society},
  title     = {Solution of the problem of {P}lateau},
  year      = {1931},
  number    = {1},
  pages     = {263--321},
  volume    = {33},
  doi       = {10.1090/s0002-9947-1931-1501590-9},
  publisher = {American Mathematical Society ({AMS})},
}

@Book{Rado1933,
  author    = {Tibor Rad{\'{o}}},
  publisher = {Springer Berlin Heidelberg},
  title     = {On the Problem of {P}lateau},
  year      = {1933},
  doi       = {10.1007/978-3-642-99118-9},
}

@Article{Karcher1989,
  author  = {H. Karcher},
  journal = {Surveys in Geometry},
  title   = {Construction of Minimal Surfaces.},
  year    = {1989},
  pages   = {1-96},
}

@Article{Hoffman1993,
  author    = {David Hoffman, Hermann Karcher, Fusheng Wei},
  journal   = {Global Analysis and Modern Mathematics},
  title     = {The Genus One Helicoid and the Minimal Surfaces that Led to its Discovery},
  year      = {1993},
  pages     = {119-170},
  publisher = {Universität Bonn. SFB 256. Nichtlineare Partielle Differentialgleichungen},
}

@Article{Wang2021,
  author    = {Stephanie Wang and Albert Chern},
  journal   = {{ACM} Transactions on Graphics},
  title     = {Computing minimal surfaces with differential forms},
  year      = {2021},
  number    = {4},
  pages     = {1--14},
  volume    = {40},
  doi       = {10.1145/3450626.3459781},
  publisher = {Association for Computing Machinery ({ACM})},
}

@Article{Condat2017,
  author    = {Laurent Condat},
  journal   = {{SIAM} Journal on Imaging Sciences},
  title     = {Discrete Total Variation: New Definition and Minimization},
  year      = {2017},
  number    = {3},
  pages     = {1258--1290},
  volume    = {10},
  doi       = {10.1137/16m1075247},
  publisher = {Society for Industrial {\&} Applied Mathematics ({SIAM})},
}

@Article{Herrmann2018,
  author    = {Marc Herrmann and Roland Herzog and Stephan Schmidt and Jos{\'{e}} Vidal-N{\'{u}}{\~{n}}ez and Gerd Wachsmuth},
  journal   = {Journal of Mathematical Imaging and Vision},
  title     = {Discrete Total Variation with Finite Elements and Applications to Imaging},
  year      = {2018},
  number    = {4},
  pages     = {411--431},
  volume    = {61},
  doi       = {10.1007/s10851-018-0852-7},
  publisher = {Springer Science and Business Media {LLC}},
}

@Article{Abergel2017,
  author    = {R{\'{e}}my Abergel and Lionel Moisan},
  journal   = {Journal of Mathematical Imaging and Vision},
  title     = {The {S}hannon Total Variation},
  year      = {2017},
  number    = {2},
  pages     = {341--370},
  volume    = {59},
  doi       = {10.1007/s10851-017-0733-5},
  publisher = {Springer Science and Business Media {LLC}},
}

@InCollection{ChambollePock2021,
  author    = {Antonin Chambolle and Thomas Pock},
  booktitle = {Geometric Partial Differential Equations - Part {II}},
  publisher = {Elsevier},
  title     = {Approximating the total variation with finite differences or finite elements},
  year      = {2021},
  pages     = {383--417},
  doi       = {10.1016/bs.hna.2020.10.005},
}

@Article{Ambrosio1996,
  author    = {Luigi Ambrosio and Halil Mete Soner},
  journal   = {Journal of Differential Geometry},
  title     = {Level set approach to mean curvature flow in arbitrary codimension},
  year      = {1996},
  month     = {jan},
  number    = {4},
  volume    = {43},
  doi       = {10.4310/jdg/1214458529},
  publisher = {International Press of Boston},
}

@Article{Dziuk1990,
  author    = {G. Dziuk},
  journal   = {Numerische Mathematik},
  title     = {An algorithm for evolutionary surfaces},
  year      = {1990},
  number    = {1},
  pages     = {603--611},
  volume    = {58},
  doi       = {10.1007/bf01385643},
  publisher = {Springer Science and Business Media {LLC}},
}

@Article{Brakke1992,
  author    = {Kenneth A. Brakke},
  journal   = {Experimental Mathematics},
  title     = {The Surface Evolver},
  year      = {1992},
  number    = {2},
  pages     = {141--165},
  volume    = {1},
  doi       = {10.1080/10586458.1992.10504253},
  publisher = {Informa {UK} Limited},
}

@Article{Chambolle2020,
  author    = {Antonin Chambolle and Thomas Pock},
  journal   = {Journal of Mathematical Imaging and Vision},
  title     = {Crouzeix{\textendash}{R}aviart Approximation of the Total Variation on Simplicial Meshes},
  year      = {2020},
  number    = {6-7},
  pages     = {872--899},
  volume    = {62},
  doi       = {10.1007/s10851-019-00939-3},
  publisher = {Springer Science and Business Media {LLC}},
}

@Article{Sahu2019,
  author    = {Dinesh Kumar Sahu and Thriveni G. Anjali and Madivala G. Basavaraj and Jure Aplinc and Simon {\v{C}}opar and Surajit Dhara},
  journal   = {Scientific Reports},
  title     = {Orientation, elastic interaction and magnetic response of asymmetric colloids in a nematic liquid crystal},
  year      = {2019},
  number    = {1},
  volume    = {9},
  doi       = {10.1038/s41598-018-36467-0},
  publisher = {Springer Science and Business Media {LLC}},
}

@Article{Aplinc2019,
  author    = {Jure Aplinc and Anja Pusovnik and Miha Ravnik},
  journal   = {Soft Matter},
  title     = {Designed self-assembly of metamaterial split-ring colloidal particles in nematic liquid crystals},
  year      = {2019},
  number    = {28},
  pages     = {5585--5595},
  volume    = {15},
  doi       = {10.1039/c9sm00842j},
  publisher = {Royal Society of Chemistry ({RSC})},
}

@Article{Senyuk2012,
  author    = {Bohdan Senyuk and Qingkun Liu and Sailing He and Randall D. Kamien and Robert B. Kusner and Tom C. Lubensky and Ivan I. Smalyukh},
  journal   = {Nature},
  title     = {Topological colloids},
  year      = {2012},
  month     = {dec},
  number    = {7431},
  pages     = {200--205},
  volume    = {493},
  doi       = {10.1038/nature11710},
  publisher = {Springer Science and Business Media {LLC}},
}

@Article{gmsh2020,
  author  = {Geuzaine, Christophe and Remacle, Jean-Francois},
  journal = {Available at: http://http://gmsh.info/},
  title   = {GMSH},
  year    = {2020},
  url     = {http://http://gmsh.info/},
}

@Article{Fenics2015,
  author    = {Alnæs, Martin and Blechta, Jan and Hake, Johan and Johansson, August and Kehlet, Benjamin and Logg, Anders and Richardson, Chris and Ring, Johannes and Rognes, Marie E and Wells, Garth N},
  journal   = {Archive of Numerical Software},
  title     = {The {F}{E}ni{C}{S} Project Version 1.5},
  year      = {2015},
  volume    = {Vol 3},
  doi       = {10.11588/ANS.2015.100.20553},
  language  = {en},
  publisher = {University Library Heidelberg},
}

@Book{Fenics2012,
  editor    = {Anders Logg and Kent-Andre Mardal and Garth Wells},
  publisher = {Springer Berlin Heidelberg},
  title     = {Automated Solution of Differential Equations by the Finite Element Method},
  year      = {2012},
  doi       = {10.1007/978-3-642-23099-8},
}

@Book{FenicsTutorial2016,
  author    = {Hans Petter Langtangen and Anders Logg},
  publisher = {Springer International Publishing},
  title     = {Solving {PDEs} in Python},
  year      = {2016},
  doi       = {10.1007/978-3-319-52462-7},
}

@Book{Paraview2015,
  author    = {Ayachit, Utkarsh},
  publisher = {Kitware, Inc.},
  title     = {The ParaView Guide: A Parallel Visualization Application},
  year      = {2015},
  isbn      = {1930934300},
}

@Book{Ern2021,
  author    = {Alexandre Ern and Jean-Luc Guermond},
  publisher = {Springer International Publishing},
  title     = {Finite Elements I: Approximation and Interpolation},
  year      = {2021},
  doi       = {10.1007/978-3-030-56341-7},
}

@Article{Boyd2010,
  author    = {Stephen Boyd and Neal Parikh and Eric Chu and Borja Peleato and Jonathan Eckstein},
  journal   = {Foundations and Trends{\textregistered} in Machine Learning},
  title     = {Distributed Optimization and Statistical Learning via the Alternating Direction Method of Multipliers},
  year      = {2010},
  number    = {1},
  pages     = {1--122},
  volume    = {3},
  doi       = {10.1561/2200000016},
  publisher = {Now Publishers},
}

@Article{Goldstein2014,
  author    = {Tom Goldstein and Brendan O{\textquotesingle}Donoghue and Simon Setzer and Richard Baraniuk},
  journal   = {{SIAM} Journal on Imaging Sciences},
  title     = {Fast Alternating Direction Optimization Methods},
  year      = {2014},
  number    = {3},
  pages     = {1588--1623},
  volume    = {7},
  doi       = {10.1137/120896219},
  publisher = {Society for Industrial {\&} Applied Mathematics ({SIAM})},
}

@Article{Kim2021,
  author    = {Donghwan Kim},
  journal   = {Mathematical Programming},
  title     = {Accelerated proximal point method for maximally monotone operators},
  year      = {2021},
  number    = {1-2},
  pages     = {57--87},
  volume    = {190},
  doi       = {10.1007/s10107-021-01643-0},
  publisher = {Springer Science and Business Media {LLC}},
}

@Article{Gabay1976,
  author    = {Daniel Gabay and Bertrand Mercier},
  journal   = {Computers {\&} Mathematics with Applications},
  title     = {A dual algorithm for the solution of nonlinear variational problems via finite element approximation},
  year      = {1976},
  number    = {1},
  pages     = {17--40},
  volume    = {2},
  doi       = {10.1016/0898-1221(76)90003-1},
  publisher = {Elsevier {BV}},
}

@Book{Arnold2018,
  author    = {Douglas N. Arnold},
  publisher = {Society for Industrial and Applied Mathematics},
  title     = {Finite Element Exterior Calculus},
  year      = {2018},
  doi       = {10.1137/1.9781611975543},
}

@Article{Buet2019,
  author    = {Blanche Buet and Gian Paolo Leonardi and Simon Masnou},
  journal   = {Nonlinear Analysis},
  title     = {Weak and approximate curvatures of a measure: A varifold perspective},
  year      = {2022},
  pages     = {112983},
  volume    = {222},
  doi       = {10.1016/j.na.2022.112983},
  publisher = {Elsevier {BV}},
}

@Article{Pauw2022,
  author    = {Thierry De Pauw and Robert Hardt},
  journal   = {Journal of Singularities},
  title     = {Linear isoperimetric inequality for normal and integral currents in compact subanalytic sets},
  year      = {2022},
  volume    = {24},
  doi       = {10.5427/jsing.2022.24f},
  publisher = {Journal of Singularities},
}

@Article{Ern2015,
  author    = {Ern, Alexandre and Guermond, Jean-Luc},
  journal   = {Computational Methods in Applied Mathematics},
  title     = {Mollification in Strongly Lipschitz Domains with Application to Continuous and Discrete De Rham Complexes},
  year      = {2015},
  issn      = {1609-4840},
  month     = dec,
  number    = {1},
  pages     = {51--75},
  volume    = {16},
  doi       = {10.1515/cmam-2015-0034},
  publisher = {Walter de Gruyter GmbH},
}

@Article{Ern2024,
  author      = {Alexandre Ern and Johnny Guzmán and Pratyush Potu and Martin Vohralík},
  journal     = {Preprint},
  title       = {Discrete Poincaré inequalities: a review on proofs, equivalent formulations, and behavior of constants},
  year        = {2024},
  abstract    = {We investigate discrete Poincar\'e inequalities on piecewise polynomial subspaces of the Sobolev spaces H(curl) and H(div) in three space dimensions. We characterize the dependence of the constants on the continuous-level constants, the shape regularity and cardinality of the underlying tetrahedral mesh, and the polynomial degree. One important focus is on meshes being local patches (stars) of tetrahedra from a larger tetrahedral mesh. We also review various equivalent results to the discrete Poincar\'e inequalities, namely stability of discrete constrained minimization problems, discrete inf-sup conditions, bounds on operator norms of piecewise polynomial vector potential operators (Poincar\'e maps), and existence of graph-stable commuting projections.},
  date        = {2024-12-16},
  eprint      = {2412.11796},
  eprintclass = {math.NA},
  eprinttype  = {arXiv},
  file        = {:http\://arxiv.org/pdf/2412.11796v1:PDF},
  keywords    = {math.NA, cs.NA},
}

@Article{Masnou2021,
  author    = {Bretin, Elie and Denis, Roland and Masnou, Simon and Terii, Garry},
  journal   = {Journal of Computational Physics},
  title     = {Learning phase field mean curvature flows with neural networks},
  year      = {2022},
  issn      = {0021-9991},
  pages     = {111579},
  volume    = {470},
  doi       = {10.1016/j.jcp.2022.111579},
  publisher = {Elsevier BV},
}

@Article{ACS2024,
  author    = {Alouges, François and Chambolle, Antonin and Stantejsky, Dominik},
  journal   = {Calculus of Variations and Partial Differential Equations},
  title     = {Convergence to line and surface energies in nematic liquid crystal colloids with external magnetic field},
  year      = {2024},
  issn      = {1432-0835},
  month     = may,
  number    = {5},
  volume    = {63},
  doi       = {10.1007/s00526-024-02717-5},
  publisher = {Springer Science and Business Media LLC},
}

\end{document}